\DeclareMathOperator*{\esssup}{ess\,sup}
\newtheorem{proposition}{Proposition}
\newtheorem{assumption}{Assumption}
\newtheorem{remark}{Remark}
\newtheorem{definition}{Definition}
\newtheorem{lemma}{Lemma}
\newtheorem{theorem}{Theorem}
\newtheorem{corollary}{Corollary}
\newtheorem{example}{Example}
\newtheorem{conjecture}{Conjecture}
\newenvironment{proof}{\emph{Proof:}}
\begin{document}
\setlength{\parindent}{2em}
\begin{frontmatter}
%\runtitle{Insert a suggested running title}  % Running title for regular
                                              % papers but only if the title
                                              % is over 5 words. Running title
                                              % is not shown in output.

\title{Stability and Bounded Real Lemmas of Discrete-Time MJLSs with the Markov Chain on a Borel Space} % Title, preferably not more
                                                % than 10 words.

\thanks[footnoteinfo]{Corresponding author: Ting Hou. Tel. +86-13455248030.}

\author[sdnu]{Chunjie Xiao}\ead{$xiaocj6\_sd@163.com$},
\author[sdnu]{Ting Hou}\ead{$ht\_math@sina.com$},
\author[sdust]{Weihai Zhang}\ead{$w\_hzhang@163.com$},

\address[sdnu]{College of Mathematics and Statistics, Shandong Normal University,
Jinan 250014, Shandong Province, China}
\address[sdust]{College of Electrical Engineering and Automation, Shandong University of Science and Technology,
Qingdao 266590, Shandong Province, China}

\begin{keyword}                           % Five to ten keywords,
Markov jump linear systems; Borel space; exponential stability;  bounded real lemma.                       % chosen from the IFAC
\end{keyword}                             % keyword list or with the
                                          % help of the Automatica
                                          % keyword wizard

\begin{abstract}             % Abstract of not more than 200 words.
In this paper, exponential stability of discrete-time Markov jump linear systems (MJLSs)
with the Markov chain on a Borel space $(\Theta, \mathcal{B}(\Theta))$
is studied, and bounded real lemmas (BRLs) are given.
The work generalizes the results from the previous literature that considered only the Markov chain taking values in a countable set
to the scenario of an uncountable set and provides unified approaches for describing exponential stability and $H_{\infty}$ performance of MJLSs.
This paper covers two kinds of exponential stabilities: one is exponential mean-square stability with conditioning (EMSSy-C),
and the other is exponential mean-square stability (EMSSy).
First, based on the infinite-dimensional operator theory,
the equivalent conditions for determining these two kinds of stabilities are shown respectively by the exponentially stable evolutions
generated by the corresponding bounded linear operators on different Banach spaces,
which turn out to present the spectral criteria of EMSSy-C and EMSSy.
Furthermore, the relationship between these two kinds of stabilities is discussed. Moreover,
some easier-to-check criteria are established for EMSSy-C of MJLSs in terms of the existence of
uniformly positive definite solutions of Lyapunov-type equations or inequalities.
In addition, BRLs are given separately in terms of the existence of solutions of
the $\Theta$-coupled difference Riccati equation for the finite horizon case
and algebraic Riccati equation for the infinite horizon case,
which facilitates the $H_{\infty}$ analysis of MJLSs with the Markov chain on a Borel space.
\sloppy{}
\end{abstract}
\end{frontmatter}

\section{Introduction}

Switching systems are usually considered to model real-world systems that undergo sudden changes in structures
due to environmental disturbances and component failures or repairs.
As a special kind of stochastic switching systems governed by Markov processes,
Markov jump linear systems (MJLSs) have gained much attention,
and some theoretical results on stability, observability, detectability, state estimation, robust control, and optimal control
have been developed for this kind of systems.
We do not intend to be exhaustive here but only mention
\cite{BookDragan2010, SCIchinadeng2023, Hout2016auto, Collin2016, Meynsp2009, ShiT2022, SiB2020, Zhoubin2018}
as well as the references therein as several samples of the theoretical studies on issues related to this paper.
Moreover, applications of the above theoretical results can be found in a wide range of fields,
including communications (see, \cite{Liuming2009, XueM2021}), robotics (see, \cite{Andres2021}),
and new energy generation (see, \cite{Lin2016, Sworder1983}).

In real-world systems, the presence of exogenous disturbance often leads to oscillation or even results in worsen control quality.
To investigate the disturbance attenuation for disturbed systems, in recent decades,
$H_{\infty}$ analysis has been the topic of extensive investigation.
As a key result in $H_{\infty}$ analysis,
the infinite horizon bounded real lemma (BRL) in time-domain analysis estimates the $H_{\infty}$ norm
of perturbation operator for the internally stable systems
relying on the existence of stabilizing solutions of the algebraic Riccati equations (AREs) or the viability of some linear matrix inequalities.
Further, it is important that BRLs have been widely applied in filter and controller designs.
For instance, BRLs were presented in \cite{Seiler2003} for homogeneous and in \cite{Aberkane2013} for nonhomogeneous MJLSs
where the Markov chain takes values in a finite set (finite MJLSs).
Regarding the applications in state estimation, the $H_{\infty}$ filter of MJLSs with the piecewise homogeneous Markov chain
was designed in \cite{Zhanglx2009} by presenting the BRL for the corresponding filtering error systems.
As for $H_{\infty}$ control problems, after providing a BRL for $H_{\infty}$ performance analysis of MJLSs with partial observations,
a detector-based $H_{\infty}$ controller design method was introduced in \cite{Marcos2018auto}.

The literature mentioned above focused only on analysis and synthesis problems of finite MJLSs,
and in effect, most of the existing literature on MJLSs did as well.
However, one should note that different Markov chain state spaces might result in different research findings.
For example, the notions of stochastic stability and mean-square stability are equivalent for finite MJLSs (see, \cite{Fengxial1992}),
but no longer equivalent when the Markov chain state space is countably infinite.
One may refer to \cite{Fragosojack2002} and \cite{Costa1995} for the cases of continuous-time and discrete-time systems, respectively.
For that reason, the control problems of MJLSs with countably infinite Markov jump parameters have been studied intensively
(see, \cite{BookDragan2014, SCIchinaHou2021, Hou2016AC, Marcosgtmf2011, Vitoricam2014}).
Regardless of whether the state space of a Markov chain is a finite set or a countably infinite set,
it just takes values in a discrete set, and none of the above results can be directly applied to MJLSs with the continuous state Markov chain
due to the difference between the sum of finite or countably infinite terms and the integral.
However, in real-world applications, a continuous state Markov chain might be more suitable for describing some general phenomena,
such as the continuous-valued random delays that exist in networked control systems
randomly determined by the last few delays (see, \cite{Masashi2018}).
Naturally, when the Markov chain is on a Borel space,
which is a general (not necessarily countable, e.g., uncountable) state space, including the case of continuous state space,
some general findings on analysis and synthesis of MJLSs have been presented over the last decade.
For example, \cite{Ioannis2014, Li2012, Costa2014} focused on mean-square stability,
uniformly exponential almost sure stability, and stochastic stability of MJLSs, respectively.
In addition, some meaningful problems of MJLSs,
such as LQ control problems (see, \cite{Ioannis2014, Costa2015, Costa2016}) and filtering problems (see, \cite{Costa2016, Costa2017}),
were studied.

In this paper, we pay attention to exponential stability and disturbance attenuation property of discrete-time MJLSs
with the Markov chain on a Borel space $(\Theta, \mathcal{B}(\Theta))$.
The main contributions are given as follows:
\begin{itemize}
\item This paper extends the previous results of MJLSs with the Markov chain on a countable set to an uncountable set.
      The extension is achieved based on the analysis of the Borel-measurable matrix-valued functions with the help of measure theory,
      such as ensuring the integrability of solutions of $\Theta$-coupled Lyapunov-type equations and AREs,
      which makes an essential difference from the previous literature.
      In fact, the Markov chain state space could be finite or countably infinite,
      only that these two special cases are no longer the key focus of our study.
      For one thing, as mentioned earlier, these two cases have been intensively studied.
      For another thing, under the analytical framework established in this paper,
      some results obtained previously in the literature for these two cases are readily apparent.
\item Based on the infinite-dimensional operator theory, this paper establishes the equivalence
      between the exponential mean-square stability with conditioning (EMSSy-C) of MJLSs and the  exponentially stable anticausal evolution (ESAE)
      generated by the bounded linear operator $\mathcal{T}_{A}$ on Banach space $\mathbb{SH}_{\infty}^{n}$.
      In fact, it is reasonable to propose the concept of EMSSy-C because in some real-world systems
      which subsystem the switching system belongs to at the initial moment is directly available or even achievable by control.
      Moreover, a spectral criterion and some easier-to-check Lyapunov-type criteria are established for testing EMSSy-C of MJLSs. Regarding exponential mean-square stability (EMSSy), a criterion is given by the exponentially stable causal evolution (ESCE) generated by
      operator $\mathcal{L}_{A}$ on Banach space $\mathbb{SH}_{1}^{n}$.
      As a result, a spectral criterion for EMSSy of MJLSs is directly obtained.
\item The BRLs are characterized by the $\Theta$-coupled difference Riccati equation (DRE) for the finite horizon case
      and ARE for the infinite horizon case (regarding the $\Theta$-coupled ARE, one can refer to \cite{Costa2015}),
      which develops a general method for the $H_{\infty}$ analysis of MJLSs with the Markov chain
      on a Borel space $(\Theta, \mathcal{B}(\Theta))$ in a unified way.
      For the infinite horizon case,
      it is proved that internal stability of the system guarantees external stability,
      and therefore the associated perturbation operators can be given.
      To measure the effect of unknown perturbations on the output in the worst-case scenario for MJLSs,
      the $H_{\infty}$ norm of the given perturbation operator is no longer provided by a $\sup$ operator
      but is given by an $\esssup$ operator which appears to be new.
      Furthermore, to the best of the authors' knowledge,
      this is the first time that a sufficient and necessary condition is presented for a given MJLS
      with the Markov chain on a general Borel space
      to ensure that the $H_{\infty}$ norm of the associated perturbation operator is below a prescribed level $\gamma>0$.
      Moreover, we set up an iterative algorithm for solving the associated $\Theta$-coupled ARE.
      Besides, it is noteworthy here that $H_{\infty}$ analysis is the basis of $H_{\infty}$ synthesis.
      The BRLs obtained can be subsequently applied to solve the system synthesis problems
      such as $H_{\infty}$ filter and $H_{\infty}$ controller designs of MJLSs.
\end{itemize}

The remainder is arranged as follows:
In Section \ref{Preliminaries and Auxiliary Results},
the dynamic model of MJLSs with the Markov chain on a general Borel space $(\Theta, \mathcal{B}(\Theta))$
and some auxiliary results on the bounded linear operators are described;
Section \ref{Exponential Stability} deals with exponential stability;
In Section \ref{Bounded Real Lemma}, we further explore $H_{\infty}$ performance of the systems,
and BRLs in the finite and infinite horizon are derived;
Examples are given in Section \ref{Example};
Section \ref{Conclusion} finishes the paper with some concluding remarks.

\section{Preliminaries and Auxiliary Results}\label{Preliminaries and Auxiliary Results}

\textbf{Notations}.
$\mathbb{N}^{+}:=\{1,2,\cdots\}$;
$\mathbb{N}:=\{0\}\bigcup\mathbb{N}^{+}$;
for integers $i$ and $j$, $\overline{i,j}:=\{i,i+1,i+2,\cdots,j\}$;
$\mathbb{R}^{n}$ is the $n$-dimensional real Euclidean space with the Euclidean norm $\|\cdot\|$.
For Banach spaces $\mathbb{M}$ and $\bar{\mathbb{M}}$, $\mathbb{B}(\mathbb{M},\bar{\mathbb{M}})$ represents the Banach space of
all bounded linear operators of $\mathbb{M}$ into $\bar{\mathbb{M}}$ with the uniform induced norm denoted by $\|\cdot\|$,
i.e., for $\mathcal{L}\in\mathbb{B}(\mathbb{M},\bar{\mathbb{M}})$, $\|\mathcal{L}\|:=\sup_{x\in\mathbb{M},\ x\neq0}\{{\|\mathcal{L}x\|}/{\|x\|}$\};
for simplicity, $\mathbb{B}\left(\mathbb{M}\right):=\mathbb{B}(\mathbb{M},\mathbb{M})$.
Let $\sigma(\mathcal{L})$ be the spectral set of $\mathcal{L} \in \mathbb{B}(\mathbb{M})$,
and the spectral radius of $\mathcal{L}$ is defined to be $r_{\sigma}(\mathcal{L}):=\sup_{\lambda\in{\sigma(\mathcal{L})}}|\lambda|$. For Banach space $\mathbb{M}$, a convex cone $\mathbb{M}^{+}\subset \mathbb{M}$ induces an ordering ``$\leq$'' on $\mathbb{M}$
by $V\leq \bar{V}$ if and only if (iff) $\bar{V}-V\in \mathbb{M}^{+}$.
An operator $\mathcal{L} \in \mathbb{B}(\mathbb{M})$ is called a positive operator,
denoted by $\mathcal{L}\geq0$, if $\mathcal{L}\mathbb{M}^{+}\subset\mathbb{M}^{+}$.
In particular, $\mathbb{R}^{m\times n}:=\mathbb{B}\left(\mathbb{R}^{n}, \mathbb{R}^{m}\right)$ denotes the $m \times n$ real matrix space.
As usual, the $n\times n$ identity matrix is expressed by $I_{n\times n}$.
For $Q\in\mathbb{R}^{m\times n}$, $Q'$ means the transpose of $Q$;
$\|Q\|:=[\lambda_{\max}(Q'Q)]^{\frac{1}{2}}$; $\mathbb{S}^{n}:=\{Q\in{\mathbb{R}^{n\times n}}|Q'=Q\}$.
For $Q_{1}\in{\mathbb{S}^{n}},\ Q_{2}\in{\mathbb{S}^{n}}$,
$Q_{1}<Q_{2}\ (Q_{1}\leq Q_{2})$ indicates that $Q_{1}-Q_{2}$ is a negative definite (negative semidefinite) matrix.
$\mathbb{S}^{n+*}:=\{Q\in{\mathbb{S}^{n}}|Q\geq \alpha I_{n\times n}\  \text{for}\  \text{some}\ \alpha>0\}$.
In what follows, we let $n$ be the associated dimension of the matrix, and $n$ is a constant.

The following result will be mentioned repeatedly:
\begin{proposition}\label{235QtrqnormQ}\citep{Costa2014}
Given any $Q\in{\mathbb{S}^{n}}$ with $Q\geq0$, we have $\|Q\|\leq tr(Q) \leq n \|Q\|$, where $tr(\cdot)$ denotes the trace operator.
\end{proposition}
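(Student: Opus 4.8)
The plan is to prove the two inequalities $\|Q\| \le \mathrm{tr}(Q)$ and $\mathrm{tr}(Q) \le n\|Q\|$ separately, using the spectral decomposition of the symmetric positive semidefinite matrix $Q$. Since $Q \in \mathbb{S}^n$ with $Q \ge 0$, there is an orthonormal basis of eigenvectors with associated eigenvalues $\lambda_1 \ge \lambda_2 \ge \cdots \ge \lambda_n \ge 0$. The key preliminary observation is that for such a $Q$ the operator norm defined in the notations, $\|Q\| = [\lambda_{\max}(Q'Q)]^{1/2} = [\lambda_{\max}(Q^2)]^{1/2}$, simplifies to $\|Q\| = \lambda_{\max}(Q) = \lambda_1$, because the eigenvalues of $Q^2$ are exactly $\lambda_i^2 \ge 0$ and the square root of the largest is $\lambda_1$ (here nonnegativity of the $\lambda_i$ is what lets us drop the absolute value). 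Also $\mathrm{tr}(Q) = \sum_{i=1}^n \lambda_i$, which is invariant under the orthogonal change of basis.

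First I would establish the lower bound. Since every $\lambda_i \ge 0$, we have $\sum_{i=1}^n \lambda_i \ge \lambda_1 = \|Q\|$, which gives $\|Q\| \le \mathrm{tr}(Q)$. Next, for the upper bound, since $\lambda_1 = \|Q\|$ is the maximum, each $\lambda_i \le \|Q\|$, so $\mathrm{tr}(Q) = \sum_{i=1}^n \lambda_i \le n\|Q\|$. Combining the two gives $\|Q\| \le \mathrm{tr}(Q) \le n\|Q\|$.

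There is really no serious obstacle here; the only point requiring a little care is justifying the identification $\|Q\| = \lambda_{\max}(Q)$ for $Q \ge 0$ rather than just $\|Q\| = [\lambda_{\max}(Q'Q)]^{1/2}$ as literally written in the notation, and this follows immediately from $Q' = Q$ together with $Q \ge 0$. If one prefers to avoid invoking the spectral theorem by name, the same conclusions follow from the variational (Rayleigh quotient) characterizations $\|Q\| = \sup_{\|x\|=1} x'Qx$ and $\mathrm{tr}(Q) = \sum_i e_i' Q e_i$ for any orthonormal basis $\{e_i\}$, but the eigenvalue argument is the cleanest. Since the statement is quoted from \citep{Costa2014}, a one-line reference plus this short argument suffices.
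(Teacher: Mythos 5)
Your argument is correct and complete: for $Q\in\mathbb{S}^{n}$ with $Q\geq 0$ the identification $\|Q\|=[\lambda_{\max}(Q'Q)]^{1/2}=\lambda_{\max}(Q)$ is valid, and the two inequalities follow immediately from $\mathrm{tr}(Q)=\sum_{i=1}^{n}\lambda_{i}$ with all $\lambda_{i}\geq 0$. Note that the paper itself gives no proof of this proposition, merely citing \citep{Costa2014}; your spectral (eigenvalue) argument is the standard one and fills that gap correctly.
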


In this paper, $\Theta$ is assumed to be a Borel subset of a Polish space (i.e., a separable and complete metric space).
The Borel space $(\Theta, \mathcal{B}(\Theta))$ is defined as $\Theta$,
together with its Borel $\sigma$-algebra $\mathcal{B}(\Theta)$.
$\mu$ is a $\sigma$-finite measure on $\mathcal{B}(\Theta)$.
By Theorem 13.1 in \cite{Kechris1995}, the topology of the Polish space can be extended to
a new topology with the same Borel sets in which $\Theta$ is clopen (i.e., open and closed), and so is Polish (in the relative topology).
Further, by Theorem 8.3.6 in \cite{Cohn2013}, if $\Theta$ is uncountable (which is the scenario of our interest),
then it is Borel isomorphic to $\mathbb{R}^{1}$.

On a probability space $(\Omega, \mathfrak{F}, \mathcal{P})$,
define a Markov chain $\{\vartheta(k)\}_{k\in\mathbb{N}}$ taking values in $\Theta$
with the initial distribution given by a probability measure $\hat{\mu}$
and the stochastic kernel $\mathcal{G}(\cdot|\cdot)$ satisfying
$\mathcal{G}(\Delta|\vartheta(k)):=\mathcal{P}(\vartheta(k+1)\in \Delta|\vartheta(k))$
almost surely $(a.s.),$  $ \forall k\in\mathbb{N}$ and $\Delta\in\mathcal{B}(\Theta)$.
Suppose that for any $\ell\in\Theta$,
$\mathcal{G}(\cdot|\ell)$ has a density $g(\cdot|\ell)$ with respect to $\mu$,
that is, $\mathcal{G}(\Delta|\ell)=\int_{\Delta}g(s|\ell)\mu(ds)$ for any $\Delta\in\mathcal{B}(\Theta).$
We now explain the reasonableness of the assumption on the Markov chain when $\Theta$ is Borel.
Indeed, by Theorem 6.3 in \cite{Kallenberg2002}, the stochastic kernel $\mathcal{G}(\cdot|\cdot)$ exists.
Moreover, the Kolmogorov consistency theorem (for example, see Theorem 10.6.2 in \cite{Cohn2013})
allows us to construct a $\Theta$-valued Markov chain
with the initial distribution $\hat{\mu}$ and the stochastic kernel $\mathcal{G}(\cdot|\cdot)$.

We say that $P=\{P(\ell)\}_{\ell\in\Theta}\in\mathbb{H}^{m\times n}$ ($P\in\mathbb{SH}^{n}$)
if  $P(\cdot): \Theta\rightarrow \mathbb{R}^{m\times n}$ ($P(\cdot): \Theta\rightarrow \mathbb{S}^{n}$) is measurable.
For $P$, $R\in\mathbb{SH}^{n}$, $P\leq R$ means that $P(\ell)\leq R(\ell)$ holds for $\mu$-almost all $\ell\in\Theta$.
$\mathbb{H}^{m\times n}_{1}=\{P\in\mathbb{H}^{m\times n}\big| \|P\|_{1}:=\int_{\Theta}\|P(\ell)\|\mu(d\ell)<\infty\}$;
$\mathbb{H}^{m\times n}_{\infty}\!=\!\{P\in\mathbb{H}^{m\times n}\big| \|P\|_{\infty}\!:=\!\esssup\{\|P(\ell)\|; \ell\in{\Theta}\}<\infty\}$.
For convenience, $\mathbb{H}^{n}_{1}:=\mathbb{H}^{n\times n}_{1}$;
$\mathbb{H}^{n}_{\infty}:=\mathbb{H}^{n\times n}_{\infty}$.
$\mathbb{SH}^{ n}_{1}=\{P\in\mathbb{SH}^{n}\big| \|P\|_{1}<\infty\}$;
$\mathbb{SH}^{n}_{\infty}=\{P\in\mathbb{SH}^{n}\big| \|P\|_{\infty}<\infty\}$.
In fact, $(\mathbb{H}_{1}^{m\times n},\|\cdot\|_{1})$ and $(\mathbb{H}^{m\times n}_{\infty},\|\cdot\|_{\infty})$
are Banach spaces (see \cite{Costa2014}).
On Banach space $(\mathbb{SH}_{1}^{n},\|\cdot\|_{1})$,
we consider the order relation induced by the convex cone $\mathbb{H}_{1}^{n+}\subset\mathbb{SH}_{1}^{n}$,
where $\mathbb{H}_{1}^{n+}=\{P\in \mathbb{SH}_{1}^{n}|P(\ell)\geq 0$ holds for $\mu$-almost all $\ell\in\Theta\}$.
On Banach space $(\mathbb{SH}_{\infty}^{n},\|\cdot\|_{\infty})$,
we consider the order relation induced by the convex cone $\mathbb{H}_{\infty}^{n+}\subset\mathbb{SH}_{\infty}^{n}$,
where $\mathbb{H}_{\infty}^{n+}=\{P\in \mathbb{SH}_{\infty}^{n} |P(\ell)\geq0$ holds for $\mu$-almost all $\ell\in\Theta\}$.
It is obvious that $(\mathbb{SH}_{1}^{n},\|\cdot\|_{1})$ and $(\mathbb{SH}_{\infty}^{n},\|\cdot\|_{\infty})$
are infinite-dimensional real ordered Banach spaces.
$\mathbb{H}_{\infty}^{n+*}=\{P\in \mathbb{H}_{\infty}^{n+} |P(\ell) \geq \alpha I_{n\times n}$
holds for $\mu$-almost all $\ell\in \Theta$, for some $\alpha>0\}$.
Let $\mathcal{I}=\{\mathcal{I}(\ell)\}_{\ell\in\Theta}$, where {$\mathcal{I}(\ell)=I_{n\times n}$.
$\mathcal{I}\in\mathbb{H}_{\infty}^{n+}$ and  is the identity element of $\mathbb{SH}_{\infty}^{n}$.
$\mathcal{I}_{\mathbb{SH}_{\infty}^{n}}$ denotes the identity operator on $\mathbb{SH}_{\infty}^{n}.$
Throughout this paper, when analyzing the functions on the given measurable spaces,
all the properties (including all the equations and inequalities) are
in the sense of $\mu$-almost everywhere on $\Theta$ ($\mu\text{-}a.e.$),
or for $\mu$-almost all $\ell\in\Theta$, if there is no special reminder.
\begin{proposition}\label{decomposition}
For any $P\in\mathbb{SH}_{1}^{n}$ $(P\in\mathbb{SH}_{\infty}^{n})$,
there exist $P^{+},\ P^{-}\in\mathbb{H}^{n+}_{1}$ $(P^{+},\ P^{-}\in\mathbb{H}^{n+}_{\infty})$ such that $P=P^{+}-P^{-}$
and $\|P\|_{1}\geq \max\{\|P^{+}\|_{1},\ \|P^{-}\|_{1}\}$ $(\|P\|_{\infty}\geq \max\{\|P^{+}\|_{\infty},\ \|P^{-}\|_{\infty}\})$.
\end{proposition}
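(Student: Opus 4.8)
The plan is to build $P^{+}$ and $P^{-}$ pointwise from the Jordan (spectral) decomposition of symmetric matrices and then check the two nontrivial requirements: measurability of the resulting functions, and the norm estimate. Fix $\ell\in\Theta$. The symmetric matrix $P(\ell)\in\mathbb{S}^{n}$ splits as $P(\ell)=P^{+}(\ell)-P^{-}(\ell)$ into its positive and negative parts, which can be written as $P^{+}(\ell)=\tfrac12\bigl(|P(\ell)|+P(\ell)\bigr)$ and $P^{-}(\ell)=\tfrac12\bigl(|P(\ell)|-P(\ell)\bigr)$, where $|P(\ell)|:=\bigl(P(\ell)^{2}\bigr)^{1/2}$ is the principal square root of the positive semidefinite matrix $P(\ell)^{2}$. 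By construction $P^{+}(\ell)\geq0$, $P^{-}(\ell)\geq0$, and $P=P^{+}-P^{-}$ holds identically on $\Theta$; thus, once measurability of $P^{\pm}(\cdot)$ is shown, $P^{+},P^{-}\in\mathbb{SH}^{n}$.

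The one step requiring care is that $\ell\mapsto P^{\pm}(\ell)$ is measurable. Since matrix multiplication on $\mathbb{R}^{n\times n}$ is continuous and $P(\cdot)$ is measurable, $\ell\mapsto P(\ell)^{2}$ is measurable with values in the cone of positive semidefinite symmetric matrices; composing with the continuous map $Q\mapsto Q^{1/2}$ on that cone shows $\ell\mapsto|P(\ell)|$ is measurable. Hence $P^{\pm}$, being affine combinations of $|P|$ and $P$, belong to $\mathbb{SH}^{n}$. (Equivalently, one may note that $Q\mapsto Q^{+}$ is the metric projection of $\mathbb{S}^{n}$, equipped with the Frobenius inner product, onto the closed convex cone of positive semidefinite matrices, hence non-expansive and in particular continuous, so it preserves measurability, and $P^{-}=P^{+}-P$.)

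It remains to establish the norm bounds, which simultaneously yield finiteness of $\|P^{\pm}\|_{1}$ (resp. $\|P^{\pm}\|_{\infty}$) and hence $P^{\pm}\in\mathbb{H}_{1}^{n+}$ (resp. $\mathbb{H}_{\infty}^{n+}$). For any $Q\in\mathbb{S}^{n}$ one has $\|Q\|=[\lambda_{\max}(Q'Q)]^{1/2}=\max_{i}|\lambda_{i}(Q)|$, while the eigenvalues of $Q^{+}$ are the nonnegative parts of those of $Q$ and the eigenvalues of $Q^{-}$ are the nonnegative parts of their negatives; therefore $\|Q^{+}\|=\max\{\lambda_{\max}(Q),0\}\leq\|Q\|$ and $\|Q^{-}\|=\max\{-\lambda_{\min}(Q),0\}\leq\|Q\|$. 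Taking $Q=P(\ell)$ gives $\|P^{\pm}(\ell)\|\leq\|P(\ell)\|$ for every $\ell\in\Theta$. In the $\mathbb{SH}_{1}^{n}$ case, monotonicity of the integral gives $\|P^{\pm}\|_{1}=\int_{\Theta}\|P^{\pm}(\ell)\|\,\mu(d\ell)\leq\int_{\Theta}\|P(\ell)\|\,\mu(d\ell)=\|P\|_{1}<\infty$; in the $\mathbb{SH}_{\infty}^{n}$ case, the definition of the essential supremum gives $\|P^{\pm}\|_{\infty}\leq\|P\|_{\infty}<\infty$. Consequently $P^{+},P^{-}\in\mathbb{H}_{1}^{n+}$ (resp. $\mathbb{H}_{\infty}^{n+}$), $P=P^{+}-P^{-}$, and $\|P\|_{1}\geq\max\{\|P^{+}\|_{1},\|P^{-}\|_{1}\}$ (resp. $\|P\|_{\infty}\geq\max\{\|P^{+}\|_{\infty},\|P^{-}\|_{\infty}\}$), as claimed. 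The only genuine obstacle here is the measurability argument; the remainder is the routine pointwise spectral calculus combined with monotonicity of the integral and of the essential supremum.
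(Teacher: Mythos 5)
Your proof is correct, but it uses a different concrete decomposition from the paper's. You take the canonical Jordan splitting $P^{\pm}(\ell)=\tfrac12\bigl(|P(\ell)|\pm P(\ell)\bigr)$ with $|P(\ell)|=(P(\ell)^{2})^{1/2}$, which forces you to verify measurability of $\ell\mapsto|P(\ell)|$ (continuity of the square root on the positive semidefinite cone, or equivalently non-expansiveness of the metric projection $Q\mapsto Q^{+}$); both of your arguments for that step are sound. The paper instead sets $\hat{P}(\ell)=\|P(\ell)\|I_{n\times n}$ and defines $P^{\pm}(\ell)=\tfrac12\bigl(\hat{P}(\ell)\pm P(\ell)\bigr)$, which sidesteps the measurability issue entirely ($\|P(\cdot)\|$ is measurable because the norm is continuous) and still gives $P^{\pm}(\ell)\geq 0$ from $-\|P(\ell)\|I\leq P(\ell)\leq\|P(\ell)\|I$, together with the same bound $\|P^{\pm}(\ell)\|\leq\|P(\ell)\|$, after which the integration and essential-supremum steps are identical to yours. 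So the overall strategy (pointwise split into positive semidefinite parts dominated in norm by $\|P(\ell)\|$, then integrate or take the essential supremum) is shared; your version buys the eigenvalue-wise minimal decomposition at the cost of an extra, correctly handled, measurability argument, while the paper's cruder scalar-times-identity choice makes the proof a one-liner. Incidentally, the paper's proof text states ``$P=P^{+}+P^{-}$'', an evident typo for $P=P^{+}-P^{-}$; your statement of the identity is the right one.
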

\begin{proof}
For any $P\in\mathbb{SH}_{1}^{n}$ ($P\in\mathbb{SH}_{\infty}^{n}$), define
$\hat{P}=\{\hat{P}(\ell)\}_{\ell\in\Theta}$ with $\hat{P}(\ell)=\|P(\ell)\| I_{n\times n}$.
Clearly, $\hat{P}(\cdot)$ is measurable.
Let
$P^{+}(\ell)=[\hat{P}(\ell)+P(\ell)]/2$ and
$P^{-}(\ell)=[\hat{P}(\ell)-P(\ell)]/2,\ \forall \ell\in\Theta.$
The reader can verify that $P=P^{+}+P^{-}$ may serve as a valid decomposition.
\end{proof}

Consider the discrete-time MJLS
\begin{equation*} \label{system}
\Phi_{v}:\left\{
\begin{array}{ll}
x(k+1)=A(\vartheta(k))x(k)+B(\vartheta(k))v(k), \\
y(k)=C(\vartheta(k))x(k)+D(\vartheta(k))v(k),\ \forall k\in\mathbb{N},\\
\ x(0)=x_{0}\in\mathbb{R}^{n},\  \vartheta(0)=\vartheta_{0},
\end{array}
\right.
\end{equation*}
where $x(k)\in{\mathbb{R}^{n}}$ is the system state;
$v(k)\in{\mathbb{R}^{r}}$ and $y(k)\in{\mathbb{R}^{m}}$ are the input and output;
$x_{0}$ is a deterministic vector in $\mathbb{R}^{n}$.
$\mathfrak{F}_{k}$ denotes the $\sigma$-field generated by $\{x(0),\ \vartheta(0)$, $x(1)$, $\vartheta(1),$ $\cdots,$ $x(k),$ $\vartheta(k)\}$.
The following assumptions are given with regard to $\Phi_{v}$:
\begin{assumption}\label{286ass}
(i)~$A\in\mathbb{H}^{n}_{\infty}$, $B\in\mathbb{H}^{n\times r}_{\infty}$,
$C\in\mathbb{H}^{m\times n}_{\infty}$, and $D\in\mathbb{H}^{m\times r}_{\infty}$;\\
(ii)~$\vartheta(k)$ is directly accessible at every $k\in\mathbb{N}$;\\
(iii)~$C(\ell)'D(\ell)=0$ holds for $\mu$-almost all $\ell\in\Theta$;\\
(iv)~The initial distribution $\hat{\mu}$ is absolutely continuous with respect to $\sigma$-finite measure $\mu$.
\end{assumption}

Considering $(iv)$ of Assumption \ref{286ass},
and applying the Radon-Nikodym theorem (for example, see Theorem 32.2 in \cite{BookBillingsley1995}),
it follows that there is $\nu(\ell)\geq0$ satisfying $\hat{\mu}(M)=\int_{M}\nu(\ell)\mu(d\ell)$ for any $M\in{\mathcal{B}(\Theta)}$,
i.e., $\nu$ plays the role of the Radon-Nikodym derivative of $\hat{\mu}$ with respect to $\mu$.
Therefore, we can properly make the assumptions as follows:
\begin{assumption}\label{Ass2902}
~$\nu(\ell)>0$ $\mu\text{-}a.e.$.
\end{assumption}
\begin{assumption}\label{1977assgtl0}
$\int_{\Theta}g(\ell|s)\mu(ds)>0$ $\mu\text{-}a.e.$.
\end{assumption}

In the remainder, we will show several auxiliary results for some given bounded linear operators.
For $K\in\mathbb{H}^{n}_{\infty}$, $V\in\mathbb{H}^{n}_{1},\ U\in\mathbb{H}^{n}_{\infty}$, and $\ell\in\Theta$, define
\begin{eqnarray}
&&\ \ \ \ \ \ \mathcal{L}_{K}(V)(\ell):=\int_{\Theta}g(\ell|s)K(s)V(s)K(s)'\mu(ds),\label{LK}\\
&&\ \ \ \ \ \ \mathcal{E}(U)(\ell):=\int_{\Theta}g(s|\ell)U(s)\mu(ds),\nonumber\\
&&\ \ \ \ \ \ \mathcal{T}_{K}(U)(\ell):=K(\ell)'\mathcal{E}(U)(\ell)K(\ell),\label{TK}
\end{eqnarray}
and the bounded bilinear operator by $\langle V; U\rangle:=\int_{\Theta}tr(V(s)'U(s))\mu (ds).$
The following proposition shows some essential properties about the above operators:
\begin{proposition}\label{operatorsspace}\citep{Costa2014}
For any $K\in\mathbb{H}^{n}_{\infty}$, $V\in\mathbb{H}_{1}^{n}$, and $U\in{\mathbb{H}_{\infty}^{n}}$, \\
(i)~$\mathcal{L}_{K}\in\mathbb{B}(\mathbb{H}_{1}^{n})$ and $r_{\sigma}(\mathcal{L}_{K})\leq\|\mathcal{L}_{K}\|\leq\|K\|^{2}_{\infty}$;\\
(ii)~$\mathcal{T}_{K}\in\mathbb{B}(\mathbb{H}_{\infty}^{n})$ and $r_{\sigma}(\mathcal{T}_{K})\leq\|\mathcal{T}_{K}\|\leq\|K\|^{2}_{\infty}$;\\
(iii)~$\langle \mathcal{L}_{K}(V); U\rangle= \langle V; \mathcal{T}_{K}(U)\rangle$.
\end{proposition}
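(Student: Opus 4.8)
The plan is to establish the three assertions in turn, leaning on two facts used repeatedly. The first is the submultiplicativity $\|XYZ\|\leq\|X\|\,\|Y\|\,\|Z\|$ of the induced matrix norm together with the uniform bound $\|K(\ell)\|\leq\|K\|_{\infty}$ valid for $\mu$-almost all $\ell$. The second is the normalization of the density: for every $\ell\in\Theta$, $\int_{\Theta}g(s|\ell)\mu(ds)=\mathcal{G}(\Theta|\ell)=1$, and equivalently, after renaming the integration variable, $\int_{\Theta}g(\ell|s)\mu(d\ell)=1$ for every fixed $s$. Measurability of the images $\mathcal{L}_{K}(V)(\cdot)$, $\mathcal{E}(U)(\cdot)$, $\mathcal{T}_{K}(U)(\cdot)$ follows from the joint $\mathcal{B}(\Theta)\otimes\mathcal{B}(\Theta)$-measurability of the integrands and Fubini's theorem on the $\sigma$-finite space $(\Theta,\mathcal{B}(\Theta),\mu)$.

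For (i): given $V\in\mathbb{H}_{1}^{n}$, I would estimate $\|\mathcal{L}_{K}(V)\|_{1}=\int_{\Theta}\|\mathcal{L}_{K}(V)(\ell)\|\mu(d\ell)\leq\int_{\Theta}\!\int_{\Theta}g(\ell|s)\|K(s)V(s)K(s)'\|\mu(ds)\mu(d\ell)$. Since the integrand is nonnegative, Tonelli's theorem permits interchanging the order of integration; pulling out $\int_{\Theta}g(\ell|s)\mu(d\ell)=1$ and bounding $\|K(s)V(s)K(s)'\|\leq\|K\|_{\infty}^{2}\|V(s)\|$ gives $\|\mathcal{L}_{K}(V)\|_{1}\leq\|K\|_{\infty}^{2}\|V\|_{1}<\infty$. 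This simultaneously shows that $\mathcal{L}_{K}(V)(\ell)$ is finite $\mu$-a.e., that $\mathcal{L}_{K}$ maps $\mathbb{H}_{1}^{n}$ into itself, that it is linear, and that $\|\mathcal{L}_{K}\|\leq\|K\|_{\infty}^{2}$; the bound $r_{\sigma}(\mathcal{L}_{K})\leq\|\mathcal{L}_{K}\|$ is then immediate from Gelfand's spectral radius formula.

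For (ii): for $\mu$-almost all $\ell$, $\|\mathcal{E}(U)(\ell)\|\leq\int_{\Theta}g(s|\ell)\|U(s)\|\mu(ds)\leq\|U\|_{\infty}\int_{\Theta}g(s|\ell)\mu(ds)=\|U\|_{\infty}$, hence $\|\mathcal{T}_{K}(U)(\ell)\|\leq\|K(\ell)\|^{2}\|\mathcal{E}(U)(\ell)\|\leq\|K\|_{\infty}^{2}\|U\|_{\infty}$; taking the essential supremum yields $\mathcal{T}_{K}\in\mathbb{B}(\mathbb{H}_{\infty}^{n})$ with $\|\mathcal{T}_{K}\|\leq\|K\|_{\infty}^{2}$ and again $r_{\sigma}(\mathcal{T}_{K})\leq\|\mathcal{T}_{K}\|$. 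For (iii), I would expand $\langle\mathcal{L}_{K}(V); U\rangle=\int_{\Theta}\!\int_{\Theta}g(\ell|s)\,tr\!\big(K(s)V(s)'K(s)'U(\ell)\big)\mu(ds)\mu(d\ell)$; the bounds from (i)--(ii) together with an elementary trace estimate ensure absolute integrability, so Fubini lets me integrate over $\ell$ first. Recognizing $\int_{\Theta}g(\ell|s)U(\ell)\mu(d\ell)=\mathcal{E}(U)(s)$ and invoking the cyclic invariance of the trace, $tr(K(s)V(s)'K(s)'\mathcal{E}(U)(s))=tr(V(s)'K(s)'\mathcal{E}(U)(s)K(s))=tr(V(s)'\mathcal{T}_{K}(U)(s))$, which integrates to $\langle V;\mathcal{T}_{K}(U)\rangle$.

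The main obstacle is not any single estimate but the measure-theoretic bookkeeping: verifying joint measurability of the kernels $(\ell,s)\mapsto g(\ell|s)K(s)V(s)K(s)'$ and $(\ell,s)\mapsto g(s|\ell)U(s)$, confirming the nonnegativity/finiteness hypotheses that legitimize the applications of Tonelli and Fubini on a merely $\sigma$-finite measure space, and keeping careful track that every identity and inequality holds only $\mu$-almost everywhere. Once these points are secured, the norm bounds reduce to submultiplicativity and the normalization $\int_{\Theta}g(\cdot|\ell)\,d\mu=1$, and the adjointness identity in (iii) is just the interchange of integration combined with the cyclic property of the trace.
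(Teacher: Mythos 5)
Your proof is correct: the normalization you use is the right one (integrating $g$ over its \emph{first} argument gives $\mathcal{G}(\Theta|\cdot)=1$, both in $\int_\Theta g(\ell|s)\mu(d\ell)=1$ for (i) and in $\int_\Theta g(s|\ell)\mu(ds)=1$ for (ii)), and the adjointness in (iii) follows exactly as you say from Fubini plus cyclicity of the trace, with absolute integrability secured by the bound $|tr(K(s)V(s)'K(s)'U(\ell))|\leq n\|K\|_\infty^2\|U\|_\infty\|V(s)\|$. Note that the paper itself gives no proof of this proposition --- it is quoted from Costa and Figueiredo (2014) --- and your argument is essentially the standard one from that reference; the only point to make explicit, which you correctly flag, is the joint measurability of $(\ell,s)\mapsto g(s|\ell)$ needed to invoke Tonelli/Fubini, which is part of the stochastic-kernel setup assumed in that framework.
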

\begin{remark}\label{SH1SHinf441}
Specially, if we let $K\in\mathbb{H}^{n}_{\infty}$, $V\in\mathbb{SH}^{n}_{1},$ and $U\in\mathbb{SH}^{n}_{\infty}$,
then $\mathcal{L}_{K}$ and $\mathcal{T}_{K}$ satisfy $\mathcal{L}_{K}(V)\in\mathbb{SH}_{1}^{n}$
and $\mathcal{T}_{K}(U)\in\mathbb{SH}_{\infty}^{n}$, respectively.
Indeed, $\mathbb{SH}_{1}^{n}$ is an invariant linear subspace of Banach space $\mathbb{H}_{1}^{n}$ under the operator $\mathcal{L}_{K}$,
while $\mathbb{SH}^{n}_{\infty}$ is an invariant linear subspace of Banach space $\mathbb{H}^{n}_{\infty}$ under the operator $\mathcal{T}_{K}$.
Consequently, we can draw the same conclusions on $\mathbb{SH}^{n}_{1}$ and $\mathbb{SH}^{n}_{\infty}$ as Proposition \ref{operatorsspace}.
Moreover, $\mathcal{L}_{K}$ and $\mathcal{T}_{K}$ are positive operators
on ordered Banach spaces $\mathbb{SH}^{n}_{1}$ and $\mathbb{SH}_{\infty}^{n}$, respectively.
\end{remark}

In what follows, if not specified, $\mathcal{L}_{A}$ and $\mathcal{T}_{A}$ will be restricted on
$\mathbb{B}(\mathbb{SH}_{1}^{n})$ and $\mathbb{B}(\mathbb{SH}_{\infty}^{n})$, respectively.
Here, $\mathcal{L}_{A}$ and $\mathcal{T}_{A}$ are defined by \eqref{LK} and \eqref{TK} with $K=A$.

For the sake of concentrating on internal stability of $\Phi_{v}$, let $(\mathbf{A};\mathcal{G})$ represent the following system:
\begin{equation*}\label{ag}
\bar{x}(k+1)\!=\!A(\vartheta(k))\bar{x}(k), \ \forall k\in\mathbb{N},\ \bar{x}(0)=x_{0},\  \vartheta(0)=\vartheta_{0}.
\end{equation*}
Define $\Gamma(k_{2},k_{1})=A(\vartheta(k_{2}-1))A(\vartheta(k_{2}-2))\cdots A(\vartheta(k_{1}))$ for $k_{1}<k_{2}$,
and $\Gamma(k_{1},k_{1})=I_{n\times n}$, as the fundamental matrix solution of $(\mathbf{A};\mathcal{G})$, $k_{1},k_{2}\in \mathbb{N}$.
Thus, $\forall k\in\mathbb{N}$, $\bar{x}(k)=\Gamma(k,0)x_{0}$.
Define $\bar{X}(k)\!=\!\{\bar{X}(k,\ell)\}_{\ell\in{\Theta}}$, $k\in\mathbb{N}$ by
\begin{equation}\label{BArxX}
\bar{X}(k,\ell)=\mathbb{E}\{\bar{x}(k)\bar{x}(k)'
g(\ell|\vartheta(k-1))\},\ \forall k\in \mathbb{N}^{+},
\end{equation}
and $\bar{X}(0,\ell)=X_{0}(\ell),\ \forall\ell\in\Theta$, where $X_{0}(\ell)=x_{0}x_{0}'\nu(\ell)$.
Clearly, $\forall k\in\mathbb{N}$, $\bar{X}(k)\in{\mathbb{H}_{1}^{n+}}$.
Using $\mathcal{L}_{A}$, the recursive form of $\bar{X}(\cdot)$ is established in the next proposition.
For more details on $(i)$,  please refer to \cite{Costa2014}, and $(ii)$ follows directly from $(i)$.
\begin{proposition}\label{420propo}
(i)~For every $k\in \mathbb{N},$ $\bar{X}(k)\in{\mathbb{H}_{1}^{n+}}$ given by \eqref{BArxX} satisfies the linear difference equation:
$\bar{X}(k+1)=\mathcal{L}_{A}(\bar{X}(k))$ with $\bar{X}(0)=X_{0}=\{X_{0}(\ell)\}_{\ell\in\Theta}$;\\
(ii)~For every $k\in \mathbb{N},$ $\bar{X}(k)=\mathcal{L}_{A}^{k}(X_{0})$.
\end{proposition}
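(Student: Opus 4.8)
The plan is to derive the recursion in (i) by combining the dynamics of $(\mathbf{A};\mathcal{G})$ with the Markov property of $\{\vartheta(k)\}$, and then to obtain (ii) by an elementary induction. I would first record the membership $\bar{X}(k)\in\mathbb{H}_{1}^{n+}$: the base case is immediate, since $X_{0}(\ell)=x_{0}x_{0}'\nu(\ell)\geq0$ is measurable in $\ell$ by Assumption \ref{Ass2902} and $\|X_{0}\|_{1}=\|x_{0}\|^{2}\int_{\Theta}\nu(\ell)\,\mu(d\ell)=\|x_{0}\|^{2}\hat{\mu}(\Theta)=\|x_{0}\|^{2}<\infty$; for $k\geq1$, measurability and $\|\cdot\|_{1}$-integrability of $\ell\mapsto\bar{X}(k,\ell)$ follow as in \cite{Costa2014}, and in any case the inductive step becomes automatic once the recursion of (i) is in hand, because $\mathcal{L}_{A}\in\mathbb{B}(\mathbb{H}_{1}^{n})$ is a positive operator (Proposition \ref{operatorsspace}, Remark \ref{SH1SHinf441}), so $\mathcal{L}_{A}(\mathbb{H}_{1}^{n+})\subset\mathbb{H}_{1}^{n+}$.

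For the recursion, fix $\ell\in\Theta$ and $k\in\mathbb{N}^{+}$. From $\bar{x}(k+1)=A(\vartheta(k))\bar{x}(k)$,
\[\bar{X}(k+1,\ell)=\mathbb{E}\{A(\vartheta(k))\bar{x}(k)\bar{x}(k)'A(\vartheta(k))'g(\ell|\vartheta(k))\}.\]
Since $\bar{x}(k)=\Gamma(k,0)x_{0}$ is a deterministic function of $\vartheta(0),\dots,\vartheta(k-1)$, while by the Markov property the conditional distribution of $\vartheta(k)$ given $\vartheta(0),\dots,\vartheta(k-1)$ is $\mathcal{G}(\cdot|\vartheta(k-1))$ with $\mu$-density $g(\cdot|\vartheta(k-1))$, conditioning on $\vartheta(0),\dots,\vartheta(k-1)$ and invoking the tower property give
\[\bar{X}(k+1,\ell)=\mathbb{E}\Big\{\int_{\Theta}g(\ell|s)A(s)\bar{x}(k)\bar{x}(k)'A(s)'g(s|\vartheta(k-1))\,\mu(ds)\Big\}.\]
Fubini's theorem (legitimate since $\|A(\cdot)\|\leq\|A\|_{\infty}<\infty$ $\mu$-a.e.\ and $\bar{X}(k)\in\mathbb{H}_{1}^{n+}$) permits moving $\mathbb{E}$ inside the integral, and recalling \eqref{BArxX} gives $\bar{X}(k+1,\ell)=\int_{\Theta}g(\ell|s)A(s)\bar{X}(k,s)A(s)'\,\mu(ds)=\mathcal{L}_{A}(\bar{X}(k))(\ell)$. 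The case $k=0$ is the identical computation with $\vartheta(0)$ carrying the $\mu$-density $\nu$, which yields $\bar{X}(1,\ell)=\int_{\Theta}g(\ell|s)A(s)x_{0}x_{0}'\nu(s)A(s)'\,\mu(ds)=\mathcal{L}_{A}(X_{0})(\ell)$ since $X_{0}(s)=x_{0}x_{0}'\nu(s)$.

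Part (ii) then follows by induction on $k$: $\bar{X}(0)=X_{0}=\mathcal{L}_{A}^{0}(X_{0})$, and if $\bar{X}(k)=\mathcal{L}_{A}^{k}(X_{0})$ then (i) yields $\bar{X}(k+1)=\mathcal{L}_{A}(\bar{X}(k))=\mathcal{L}_{A}^{k+1}(X_{0})$. I expect the main obstacle to be the measure-theoretic bookkeeping---the measurability of $\ell\mapsto\bar{X}(k,\ell)$ and the rigorous justification of interchanging $\mathbb{E}$ with $\int_{\Theta}$---which relies on the uniform bound $\|A\|_{\infty}<\infty$ and the $L^{1}$ estimates of \cite{Costa2014}; the remaining steps are just linearity together with the Markov property.
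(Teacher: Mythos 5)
Your proof is correct and is essentially the argument the paper delegates: the paper offers no proof of its own, deferring (i) to \cite{Costa2014}, where the recursion is obtained by exactly your computation (write $\bar{x}(k+1)=A(\vartheta(k))\bar{x}(k)$, condition on $\vartheta(0),\dots,\vartheta(k-1)$ using the Markov kernel density $g(\cdot|\vartheta(k-1))$, interchange $\mathbb{E}$ and $\int_{\Theta}$, and treat $k=0$ via the initial density $\nu$), with (ii) following, as you note, by trivial induction. The only loose spot is your stated justification of the interchange—for fixed $\ell$ the factor $g(\ell|s)$ need not be bounded—but since the identity is only claimed for $\mu$-almost all $\ell$, Tonelli applied to the positive semidefinite integrand (whose entries are dominated by the diagonal ones, and whose $\ell$-integral is finite by $\int_{\Theta}g(\ell|s)\mu(d\ell)=1$) closes this gap without changing your argument.
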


The following definitions are from \cite{BookDragan2010}.
For all integers $ k_{1}\leq k_{2}$, the causal evolution operator $S_{A}(k_{2},k_{1})$ generated by $\mathcal{L}_{A}$ is given by $S_{A}(k_{2},k_{1})=\mathcal{L}_{A}^{k_{2}-k_{1}}$ for $k_{1}<k_{2}$
and $S_{A}(k_{1},k_{1})=\mathcal{I}_{\mathbb{SH}_{1}^{n}}$,
where $\mathcal{I}_{\mathbb{SH}_{1}^{n}}$ is the identity operator on $\mathbb{SH}_{1}^{n}.$
The anticausal evolution operator $T_{A}(k_{1},k_{2})$ generated by $\mathcal{T}_{A}$
is given by $T_{A}(k_{1},k_{2})=\mathcal{T}_{A}^{k_{2}-k_{1}}$ for $k_{1}<k_{2}$ and $T_{A}(k_{2},k_{2})=\mathcal{I}_{\mathbb{SH}_{\infty}^{n}}$.
\begin{definition}
(i)~We call that the causal evolution operator $S_{A}(k_{2},k_{1})$ generated by $\mathcal{L}_{A}$ is exponentially stable (or $\mathcal{L}_{A}$ generates an ESCE)
if there exist $\beta \geq 1$, $\alpha \in(0,1)$ such that
$\|S_{A}(k_{2},k_{1})\| \leq \beta \alpha^{k_{2}-k_{1}}$
for all integers $ k_{1}\leq k_{2}$;\\
(ii)~We call that the anticausal evolution operator $T_{A}(k_{1},k_{2})$ generated by $\mathcal{T}_{A}$ is exponentially stable (or  $\mathcal{T}_{A}$ generates an ESAE)
if there exist $\beta \geq 1$, $\alpha \in(0,1)$ such that
$\|T_{A}(k_{1},k_{2})\| \leq \beta \alpha^{k_{2}-k_{1}}$
for all integers $k_{1}\leq k_{2}$.
\end{definition}

\section{Exponential Stability}\label{Exponential Stability}

In this section, we will deal with
 exponential stability of $(\mathbf{A};\mathcal{G})$.
\begin{definition}\label{def596emssc}
(i)~We call that $(\mathbf{A};\mathcal{G})$ is exponentially mean-square stable with conditioning (EMSS-C),
if there exist $\beta \geq 1$, $\alpha \in(0,1)$ such that
$\mathbb{E}\{\left.\|\bar{x}(k)\|^{2}|\vartheta_{0}=\ell\right\} \leq \beta \alpha^{k}\|x_{0}\|^{2}$ $\mu\text{-}a.e.$
for every $k \in \mathbb{N}$ and $x_{0} \in \mathbb{R}^{n}$;\\
(ii)~We call that $(\mathbf{A};\mathcal{G})$ is exponentially mean-square stable (EMSS),
if there exist $\beta \geq 1$, $\alpha \in(0,1)$ such that
$\mathbb{E}\{\left.\|\bar{x}(k)\|^{2}\right\} \leq \beta \alpha^{k}\|x_{0}\|^{2}$
for every $k \in \mathbb{N}$, $x_{0}\in \mathbb{R}^{n}$, and $\vartheta_{0}$.
\end{definition}

According to Definition \ref{def596emssc}, we could unequivocally draw the following result, which shows that EMSSy-C is stronger than EMSSy.
\begin{proposition}\label{co973EMSSEMSSC}
$(\mathbf{A};\mathcal{G})$ is EMSS provided $(\mathbf{A};\mathcal{G})$ is EMSS-C.
\end{proposition}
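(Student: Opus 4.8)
The plan is to recover the unconditional estimate in Definition \ref{def596emssc}$(ii)$ from the conditional one in Definition \ref{def596emssc}$(i)$ by integrating against the law of the initial mode $\vartheta_0$. Suppose $(\mathbf{A};\mathcal{G})$ is EMSS-C, so there are constants $\beta\geq1$ and $\alpha\in(0,1)$ with $\mathbb{E}\{\|\bar{x}(k)\|^{2}\mid\vartheta_0=\ell\}\leq\beta\alpha^{k}\|x_0\|^{2}$ for $\mu$-almost all $\ell\in\Theta$, for every $k\in\mathbb{N}$ and every $x_0\in\mathbb{R}^{n}$. I would first apply the tower property of conditional expectation to write $\mathbb{E}\{\|\bar{x}(k)\|^{2}\}=\mathbb{E}\{\mathbb{E}\{\|\bar{x}(k)\|^{2}\mid\vartheta_0\}\}$, where the outer expectation is taken with respect to the distribution $\hat{\mu}$ of $\vartheta_0=\vartheta(0)$.

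The one point requiring care is the mismatch between the measures: the conditional bound is only guaranteed $\mu$-a.e., whereas the outer averaging is with respect to $\hat{\mu}$. Here I would invoke $(iv)$ of Assumption \ref{286ass}: since $\hat{\mu}$ is absolutely continuous with respect to $\mu$, every $\mu$-null subset of $\Theta$ is also $\hat{\mu}$-null, so the inequality $\mathbb{E}\{\|\bar{x}(k)\|^{2}\mid\vartheta_0=\ell\}\leq\beta\alpha^{k}\|x_0\|^{2}$ in fact holds for $\hat{\mu}$-almost all $\ell$, i.e., almost surely under the law of $\vartheta_0$ (equivalently, using the Radon--Nikodym density $\nu$, the exceptional set is negligible for $\hat{\mu}(d\ell)=\nu(\ell)\mu(d\ell)$). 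Taking expectations of both sides of this $\hat{\mu}$-a.s. valid bound and using that $\hat{\mu}$ is a probability measure,
\[
\mathbb{E}\{\|\bar{x}(k)\|^{2}\}=\int_{\Theta}\mathbb{E}\{\|\bar{x}(k)\|^{2}\mid\vartheta_0=\ell\}\,\hat{\mu}(d\ell)\leq\int_{\Theta}\beta\alpha^{k}\|x_0\|^{2}\,\hat{\mu}(d\ell)=\beta\alpha^{k}\|x_0\|^{2},
\]
for every $k\in\mathbb{N}$ and $x_0\in\mathbb{R}^{n}$, which is precisely the estimate demanded by EMSS, with the same constants $\beta$ and $\alpha$.

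No genuine analytic difficulty arises, so the only substantive step — and the one I would flag — is the upgrade from ``$\mu$-a.e.'' to ``$\hat{\mu}$-a.e.''. This is exactly where Assumption \ref{286ass}$(iv)$ (and hence Assumption \ref{Ass2902} on the density $\nu$) is indispensable: it is what rules out an initial distribution concentrated on a $\mu$-negligible set of modes on which the conditional decay could fail, and thereby makes the implication hold in full generality.
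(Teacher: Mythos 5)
Your proposal is correct and matches the paper's own proof, which likewise writes $\mathbb{E}\{\|\bar x(k)\|^{2}\}=\int_{\Theta}\mathbb{E}\{\|\bar x(k)\|^{2}\mid\vartheta_{0}=\ell\}\nu(\ell)\mu(d\ell)\leq\beta\alpha^{k}\|x_{0}\|^{2}\int_{\Theta}\nu(\ell)\mu(d\ell)=\beta\alpha^{k}\|x_{0}\|^{2}$, i.e., exactly your tower-property argument through the Radon--Nikodym density $\nu$, exploiting that $\beta,\alpha$ are independent of $\ell$. One minor correction: only the absolute continuity $\hat{\mu}\ll\mu$ of Assumption \ref{286ass}$(iv)$ is needed to upgrade the $\mu$-a.e.\ conditional bound to an $\hat{\mu}$-a.e.\ one; Assumption \ref{Ass2902} ($\nu>0$ $\mu$-a.e.) is not required for this implication (it would matter for the converse direction).
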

\begin{proof}
Since $\alpha$, $\beta$ are independent of $\ell$ in Definition \ref{def596emssc},
if $(\mathbf{A};\mathcal{G})$ is EMSS-C, then there exist $\beta \geq 1$, $\alpha \in(0,1)$ such that
$\mathbb{E}\{\|\bar{x}(k)\|^{2}\}=\int_{\Theta}\mathbb{E}\{\|\bar{x}(k)\|^{2}\|\vartheta_{0}=\ell\}\nu(\ell)\mu(d\ell)\leq\beta \alpha^{k}\|x_{0}\|^{2} \int_{\Theta}\nu(\ell)\mu(d\ell)=\beta \alpha^{k}\|x_{0}\|^{2},$
which ends the proof.
\end{proof}
\begin{remark}
The current idea of the proof of Proposition \ref{co973EMSSEMSSC} is acquired along the hint of an anonymous reviewer.
\end{remark}

Below we attempt to discuss the EMSSy-C of $(\mathbf{A};\mathcal{G})$ through operator $\mathcal{T}_{A}$.
A necessary lemma is put forward first.
\begin{lemma}\label{le620diedai}
For any $V=\{V(\ell)\}_{\ell\in\Theta}\in{\mathbb{SH}_{\infty}^{n}}$ and $k\in\mathbb{N}$,
\begin{equation}\label{lemma4gvkg622}
\mathbb{E}\{\Gamma(k,0)'V(\vartheta(k))\Gamma(k,0)|\vartheta_{0}\}=\mathcal{T}_{A}^{k}(V)(\vartheta_{0})\ a.s..
\end{equation}
\end{lemma}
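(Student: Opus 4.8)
The plan is to prove \eqref{lemma4gvkg622} by induction on $k$, using the Markov property of $\{\vartheta(k)\}_{k\in\mathbb{N}}$ together with the tower property of conditional expectation. First I would check the base case $k=0$: since $\Gamma(0,0)=I_{n\times n}$, the left-hand side equals $\mathbb{E}\{V(\vartheta_{0})|\vartheta_{0}\}=V(\vartheta_{0})$, which by definition of $\mathcal{T}_{A}^{0}=\mathcal{I}_{\mathbb{SH}_{\infty}^{n}}$ equals $\mathcal{T}_{A}^{0}(V)(\vartheta_{0})$ almost surely. The case $k=1$ will serve as the key computational step that the induction reuses: using $\Gamma(1,0)=A(\vartheta_{0})$ and conditioning on $\vartheta_{0}$, I would write
\begin{equation*}
\mathbb{E}\{A(\vartheta_{0})'V(\vartheta(1))A(\vartheta_{0})|\vartheta_{0}\}
=A(\vartheta_{0})'\mathbb{E}\{V(\vartheta(1))|\vartheta_{0}\}A(\vartheta_{0}),
\end{equation*}
and then evaluate $\mathbb{E}\{V(\vartheta(1))|\vartheta_{0}=\ell\}=\int_{\Theta}V(s)\,\mathcal{G}(ds|\ell)=\int_{\Theta}g(s|\ell)V(s)\mu(ds)=\mathcal{E}(V)(\ell)$, so that the expression becomes $A(\ell)'\mathcal{E}(V)(\ell)A(\ell)=\mathcal{T}_{A}(V)(\ell)$, matching \eqref{TK} with $K=A$.

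For the inductive step, assume \eqref{lemma4gvkg622} holds for $k$ and all $V\in\mathbb{SH}_{\infty}^{n}$. I would use the multiplicative decomposition $\Gamma(k+1,0)=\Gamma(k+1,1)\,A(\vartheta_{0})$ and condition first on $\mathfrak{F}_{1}$ (or on $(\vartheta_{0},\vartheta(1))$), writing
\begin{equation*}
\mathbb{E}\{\Gamma(k+1,0)'V(\vartheta(k+1))\Gamma(k+1,0)|\vartheta_{0}\}
=A(\vartheta_{0})'\,\mathbb{E}\big\{\mathbb{E}\{\Gamma(k+1,1)'V(\vartheta(k+1))\Gamma(k+1,1)\,|\,\mathfrak{F}_{1}\}\,\big|\,\vartheta_{0}\big\}\,A(\vartheta_{0}).
\end{equation*}
By the time-homogeneous Markov property, the inner conditional expectation is exactly the $k$-step object started from $\vartheta(1)$, so the induction hypothesis gives $\mathbb{E}\{\Gamma(k+1,1)'V(\vartheta(k+1))\Gamma(k+1,1)|\mathfrak{F}_{1}\}=\mathcal{T}_{A}^{k}(V)(\vartheta(1))$ a.s.; here one uses that $\mathcal{T}_{A}^{k}(V)\in\mathbb{SH}_{\infty}^{n}$ (Remark \ref{SH1SHinf441}) so it is a legitimate input. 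Substituting and then applying the $k=1$ computation with $V$ replaced by $\mathcal{T}_{A}^{k}(V)$ yields $A(\vartheta_{0})'\mathcal{E}(\mathcal{T}_{A}^{k}(V))(\vartheta_{0})A(\vartheta_{0})=\mathcal{T}_{A}(\mathcal{T}_{A}^{k}(V))(\vartheta_{0})=\mathcal{T}_{A}^{k+1}(V)(\vartheta_{0})$ a.s., closing the induction.

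The main obstacle I anticipate is measure-theoretic rather than algebraic: I must be careful that each conditional expectation is well-defined and finite, that the interchange of conditioning steps is justified by the tower property for the filtration $\{\mathfrak{F}_{k}\}$, and that the Markov property is applied in the correct time-homogeneous form so that ``restarting at $\vartheta(1)$'' genuinely reproduces $\mathcal{T}_{A}^{k}$. Boundedness of $A$ (part (i) of Assumption \ref{286ass}) and of $V$ (hence of all $\mathcal{T}_{A}^{j}(V)$, by Proposition \ref{operatorsspace}(ii) and Remark \ref{SH1SHinf441}) guarantees all the integrals converge, and the identity $\mathcal{G}(ds|\ell)=g(s|\ell)\mu(ds)$ lets me pass from the abstract conditional expectation $\mathbb{E}\{\,\cdot\,|\vartheta_{0}=\ell\}$ to the concrete integral defining $\mathcal{E}$; the resulting equalities hold for $\mu$-almost all $\ell$, which is the sense in which \eqref{lemma4gvkg622} is asserted.
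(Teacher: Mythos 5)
Your proposal is correct, but it closes the induction in a genuinely different way from the paper. You peel off the \emph{first} factor, writing $\Gamma(k+1,0)=\Gamma(k+1,1)A(\vartheta_{0})$, condition on $\mathfrak{F}_{1}$, and then apply the induction hypothesis to the chain ``restarted'' at $\vartheta(1)$; this step is the crux of your argument and, to be fully rigorous, needs either a strengthened inductive statement indexed by the starting time (namely $\mathbb{E}\{\Gamma(k+j,j)'V(\vartheta(k+j))\Gamma(k+j,j)\,|\,\mathfrak{F}_{j}\}=\mathcal{T}_{A}^{k}(V)(\vartheta(j))$ a.s.\ for all $j$) or an explicit appeal to time-homogeneity of the kernel $\mathcal{G}$, which you flag as the main obstacle but do not carry out in detail. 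The paper instead peels off the \emph{last} factor, $\Gamma(i+1,0)=A(\vartheta(i))\Gamma(i,0)$, conditions on $\mathfrak{F}_{i}$ so that the innermost term $V(\vartheta(i+1))$ collapses (via Fubini and the density $g$) to $\mathcal{T}_{A}(V)(\vartheta(i))$, and then applies the induction hypothesis \emph{verbatim} --- same time window $\overline{0,i}$, same conditioning on $\vartheta_{0}$ --- but with $\mathcal{T}_{A}(V)$ in place of $V$, which is legitimate because the hypothesis is quantified over all $V\in\mathbb{SH}_{\infty}^{n}$ and $\mathcal{T}_{A}(V)\in\mathbb{SH}_{\infty}^{n}$ by Remark \ref{SH1SHinf441}. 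The paper's decomposition thus avoids any restart/shift argument entirely, while your route is the more overtly probabilistic Markov-property version; your identification of $\mathbb{E}\{V(\vartheta(1))|\vartheta_{0}=\ell\}=\mathcal{E}(V)(\ell)$ and your boundedness justifications match what the paper uses, so modulo writing out the shifted induction hypothesis your argument is sound and yields \eqref{lemma4gvkg622}.
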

\begin{proof}
Let us prove the result by mathematical induction.
For any $V\in{\mathbb{SH}_{\infty}^{n}}$, if $k=0$, then \eqref{lemma4gvkg622} becomes a tautology.
Suppose that \eqref{lemma4gvkg622} holds for $k=i\in\mathbb{N}^{+}$.
According to Fubini's theorem, we get that
\begin{equation}\label{638k+1vth0}
\begin{aligned}
&\mathbb{E}\{\Gamma(i+1,0)'V(\vartheta(i+1))\Gamma(i+1,0)
|\vartheta_{0}\}\\
=&\mathbb{E}\{\mathbb{E}[\Gamma(i,0)'A(\vartheta(i))'
V(\vartheta(i+1))
A(\vartheta(i))\Gamma(i,0)|\mathfrak{F}_{i}]|\vartheta_{0}\}\\
=&\mathbb{E}\{\Gamma(i,0)'A(\vartheta(i))'
\mathbb{E}[V(\vartheta(i+1))|\mathfrak{F}_{i}]
A(\vartheta(i))\Gamma(i,0)|\vartheta_{0}\}\\
=&\mathbb{E}\!\{\!\Gamma(i,\!0)'\!A(\vartheta(i))'\!
\int_{\!\Theta\!}\!V\!(s)g(s|\vartheta(i))\mu(ds)\!
A(\vartheta(i))\Gamma(i,\!0)|\!\vartheta_{0}\!\}\\
=&\mathbb{E}\{\Gamma(i,0)'\mathcal{T}_{A}(V)
(\vartheta(i))\Gamma(i,0)|\vartheta_{0}\}\ a.s..
\end{aligned}
\end{equation}
Recalling the hypothesis that \eqref{lemma4gvkg622} holds for $i$, it follows from \eqref{638k+1vth0} that
$\mathbb{E}\{\Gamma(i+1,0)'V(\vartheta(i+1))\Gamma(i+1,0)
|\vartheta_{0}\}
=\mathbb{E}\{\Gamma(i,0)'[\mathcal{T}_{A}(V)(\vartheta(i))]
\Gamma(i,0)|\vartheta_{0}\}
=\mathcal{T}_{A}^{i}[\mathcal{T}_{A}(V)](\vartheta_{0})
=\mathcal{T}_{A}^{i+1}(V)(\vartheta_{0})\ a.s.,$
i.e., \eqref{lemma4gvkg622} is true for $k=i+1$.
Thus, \eqref{lemma4gvkg622} holds for every $k\in\mathbb{N}$.
\end{proof}
\begin{theorem}\label{EMSSCiff}
$(\mathbf{A};\mathcal{G})$ is EMSS-C iff $\mathcal{T}_{A}$ generates an ESAE.
\end{theorem}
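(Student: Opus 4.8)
The plan is to reduce both implications to the single identity obtained from Lemma \ref{le620diedai} with the particular choice $V=\mathcal{I}$. Since $\bar{x}(k)=\Gamma(k,0)x_{0}$ and $x_{0}$ is deterministic, that lemma gives, for every $k\in\mathbb{N}$ and every $x_{0}\in\mathbb{R}^{n}$,
\begin{equation*}
\mathbb{E}\{\|\bar{x}(k)\|^{2}\,|\,\vartheta_{0}=\ell\}=x_{0}'\,\mathcal{T}_{A}^{k}(\mathcal{I})(\ell)\,x_{0}\qquad(\mu\text{-}a.e.).
\end{equation*}
Here I pass from the a.s.\ identity in Lemma \ref{le620diedai} (which is an identity for $\hat{\mu}$-almost all $\ell$) to an identity for $\mu$-almost all $\ell$ by noting that Assumption \ref{Ass2902} together with Assumption \ref{286ass}(iv) makes $\hat{\mu}$ and $\mu$ mutually absolutely continuous. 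Moreover $\mathcal{T}_{A}^{k}(\mathcal{I})\in\mathbb{H}_{\infty}^{n+}$, because $\mathcal{I}\in\mathbb{H}_{\infty}^{n+}$ and $\mathcal{T}_{A}$ is a positive operator on $\mathbb{SH}_{\infty}^{n}$ (Remark \ref{SH1SHinf441}); in particular $\mathcal{T}_{A}^{k}(\mathcal{I})(\ell)$ is symmetric positive semidefinite for $\mu$-almost all $\ell$.

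For the implication ``$\mathcal{T}_{A}$ generates an ESAE $\Rightarrow$ $(\mathbf{A};\mathcal{G})$ is EMSS-C'' I would argue directly. If $\|T_{A}(k_{1},k_{2})\|=\|\mathcal{T}_{A}^{k_{2}-k_{1}}\|\leq\beta\alpha^{k_{2}-k_{1}}$ for all integers $k_{1}\leq k_{2}$, then, using the identity above and $\|\mathcal{I}\|_{\infty}=1$, for $\mu$-almost all $\ell$,
\begin{equation*}
\mathbb{E}\{\|\bar{x}(k)\|^{2}\,|\,\vartheta_{0}=\ell\}=x_{0}'\mathcal{T}_{A}^{k}(\mathcal{I})(\ell)x_{0}\leq\|\mathcal{T}_{A}^{k}(\mathcal{I})(\ell)\|\,\|x_{0}\|^{2}\leq\|\mathcal{T}_{A}^{k}(\mathcal{I})\|_{\infty}\|x_{0}\|^{2}\leq\|\mathcal{T}_{A}^{k}\|\,\|x_{0}\|^{2}\leq\beta\alpha^{k}\|x_{0}\|^{2},
\end{equation*}
which is exactly the EMSS-C estimate of Definition \ref{def596emssc}(i).

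For the converse, fix $k$ and the constants $\beta\geq1$, $\alpha\in(0,1)$ furnished by EMSS-C. By the identity above, for each fixed $x_{0}$ there is a $\mu$-null set off which $x_{0}'\mathcal{T}_{A}^{k}(\mathcal{I})(\ell)x_{0}\leq\beta\alpha^{k}\|x_{0}\|^{2}$; taking the union of these exceptional sets over a countable dense set of vectors $x_{0}$ and using that both sides depend continuously on $x_{0}$, one obtains a single $\mu$-null set $N$ such that for every $\ell\notin N$ the inequality holds for all $x_{0}\in\mathbb{R}^{n}$. Since $\mathcal{T}_{A}^{k}(\mathcal{I})(\ell)$ is symmetric positive semidefinite, $\|\mathcal{T}_{A}^{k}(\mathcal{I})(\ell)\|=\sup_{\|x_{0}\|=1}x_{0}'\mathcal{T}_{A}^{k}(\mathcal{I})(\ell)x_{0}\leq\beta\alpha^{k}$ for $\ell\notin N$, i.e. $\|\mathcal{T}_{A}^{k}(\mathcal{I})\|_{\infty}\leq\beta\alpha^{k}$. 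It remains to upgrade this to a bound on the operator norm: for any $U\in\mathbb{SH}_{\infty}^{n}$ with $\|U\|_{\infty}\leq1$ one has $-\mathcal{I}\leq U\leq\mathcal{I}$, so positivity of $\mathcal{T}_{A}^{k}$ gives $-\mathcal{T}_{A}^{k}(\mathcal{I})\leq\mathcal{T}_{A}^{k}(U)\leq\mathcal{T}_{A}^{k}(\mathcal{I})$, and since $\mathcal{T}_{A}^{k}(\mathcal{I})\geq0$ this forces $\|\mathcal{T}_{A}^{k}(U)(\ell)\|\leq\|\mathcal{T}_{A}^{k}(\mathcal{I})(\ell)\|$ for $\mu$-almost all $\ell$, hence $\|\mathcal{T}_{A}^{k}(U)\|_{\infty}\leq\|\mathcal{T}_{A}^{k}(\mathcal{I})\|_{\infty}$; taking the supremum over such $U$ (with $U=\mathcal{I}$ giving the reverse inequality) yields $\|\mathcal{T}_{A}^{k}\|=\|\mathcal{T}_{A}^{k}(\mathcal{I})\|_{\infty}\leq\beta\alpha^{k}$. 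As $k$ was arbitrary, $\|T_{A}(k_{1},k_{2})\|=\|\mathcal{T}_{A}^{k_{2}-k_{1}}\|\leq\beta\alpha^{k_{2}-k_{1}}$ for all integers $k_{1}\leq k_{2}$, i.e. $\mathcal{T}_{A}$ generates an ESAE.

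I expect the converse direction to carry the real difficulty, for two reasons: first, the interchange of quantifiers, turning ``for every $x_{0}$, $\mu$-a.e.\ $\ell$'' into ``$\mu$-a.e.\ $\ell$, for every $x_{0}$'', which requires the separability of $\mathbb{R}^{n}$ and the joint continuity in $x_{0}$; and second, the identity $\|\mathcal{T}_{A}^{k}\|=\|\mathcal{T}_{A}^{k}(\mathcal{I})\|_{\infty}$, for which the ordered-Banach-space structure of $\mathbb{SH}_{\infty}^{n}$, the role of $\mathcal{I}$ as its order unit, and the positivity of $\mathcal{T}_{A}$ (Remark \ref{SH1SHinf441}) are all used in an essential way.
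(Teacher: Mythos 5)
Your proof is correct, and it follows the paper's skeleton: the same key identity from Lemma \ref{le620diedai} with $V=\mathcal{I}$, and the same direct argument for sufficiency. Where you genuinely diverge is in the necessity direction. The paper takes an arbitrary $\hat{V}\in\mathbb{SH}_{\infty}^{n}$ with $\|\hat{V}\|_{\infty}=1$, splits it via Proposition \ref{decomposition} into $\hat{V}=\hat{V}^{+}-\hat{V}^{-}$ with $\max\{\|\hat{V}^{+}\|_{\infty},\|\hat{V}^{-}\|_{\infty}\}\leq 1$, bounds each conditional expectation $\mathbb{E}\{\Gamma(k,0)'\hat{V}^{\pm}(\vartheta(k))\Gamma(k,0)|\vartheta_{0}=\ell\}$ by $\mathbb{E}\{\Gamma(k,0)'\Gamma(k,0)|\vartheta_{0}=\ell\}$ using $\hat{V}^{\pm}(\vartheta(k))\leq I_{n\times n}$, and then applies the triangle inequality, which costs a factor $2$ (the paper ends with $\beta=2\beta_{1}$). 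You instead work at the level of the operator itself: from $-\mathcal{I}\leq U\leq\mathcal{I}$ and positivity of $\mathcal{T}_{A}^{k}$ you get the two-sided sandwich $-\mathcal{T}_{A}^{k}(\mathcal{I})\leq\mathcal{T}_{A}^{k}(U)\leq\mathcal{T}_{A}^{k}(\mathcal{I})$, and since these are symmetric matrices this yields the exact identity $\|\mathcal{T}_{A}^{k}\|=\|\mathcal{T}_{A}^{k}(\mathcal{I})\|_{\infty}$ with no loss of constant; in effect you bypass Proposition \ref{decomposition} entirely and exploit $\mathcal{I}$ as an order unit, which is cleaner and also re-proves Corollary \ref{coro742tai} in passing. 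You are also more explicit than the paper on two measure-theoretic points it passes over silently: the quantifier exchange from ``for every $x_{0}$, $\mu$-a.e.\ $\ell$'' to ``$\mu$-a.e.\ $\ell$, for every $x_{0}$'' via a countable dense set of $x_{0}$, and the passage from the $a.s.$ ($\hat{\mu}$-a.e.) statement of Lemma \ref{le620diedai} to a $\mu$-a.e.\ statement via Assumptions \ref{286ass}(iv) and \ref{Ass2902}. Both writeups establish the theorem; yours trades the paper's decomposition-plus-triangle-inequality for an order-theoretic sandwich that gives a sharper constant and a tidier identification of $\|\mathcal{T}_{A}^{k}\|$.
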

\begin{proof}
Let $\vartheta_{0}=\ell\in\Theta$ be arbitrary but fixed.
For any $V\in{\mathbb{SH}_{\infty}^{n}}$ and $k\in\mathbb{N}$, by Lemma \ref{le620diedai}, we have that
\begin{equation}\label{TAkV}
\mathbb{E}\{\Gamma(k,0)'V(\vartheta(k))\Gamma(k,\!0)
|\vartheta_{0}\!=\!\ell\}\!=\!\mathcal{T}_{A}^{k}\!(V)(\ell)\ \mu\text{-}a.e..
\end{equation}
Letting $V=\mathcal{I}$ in \eqref{TAkV}, and pre- and post-multiplying by $x_{0}'$ and $x_{0}$, respectively,
one can infer that for every $k\in\mathbb{N}$,
$\mathbb{E}\{\|\bar{x}(k)\|^{2}|\vartheta_{0}=\ell\}
=x_{0}'\mathcal{T}_{A}^{k}(\mathcal{I})(\ell)x_{0},$ where $\bar{x}(k)$ is the system state of $(\mathbf{A};\mathcal{G})$.\\
Sufficiency. If $\mathcal{T}_{A}$ generates an ESAE,
then there are $\beta \geq 1$, $\alpha \in(0,1)$ such that
$\|\mathcal{T}_{A}^{k}(\mathcal{I})(\ell)\|\leq\|\mathcal{T}_{A}^{k}\|\leq \beta\alpha^{k},\  \forall k\in\mathbb{N}.$
Noticing that $\mathcal{T}_{A}^{k}(\mathcal{I})(\ell)\in\mathbb{S}^{n}$, one can get that
$\mathbb{E}\{\|\bar{x}(k)\|^{2}|\vartheta_{0}=\ell\}
\leq \|x_{0}\|^{2}\|\mathcal{T}_{A}^{k}(\mathcal{I})(\ell)\|$.
Hence, we conclude that $(\mathbf{A};\mathcal{G})$ is EMSS-C.\\
Necessity. If $(\mathbf{A};\mathcal{G})$ is EMSS-C, then there are $\beta_{1} \geq 1$, $\alpha \in(0,1)$ such that
$\mathbb{E}\{x_{0}'\Gamma(k,0)'\Gamma(k,0)x_{0}|\vartheta_{0}=\ell\}=\mathbb{E}\{\|\bar{x}(k)\|^{2}|\vartheta_{0}=\ell\}
\leq \beta_{1}\alpha^{k}\|x_{0}\|^{2},\  \forall k\in\mathbb{N}.$
Then
\begin{equation}\label{gamma|var0leq}
\|\mathbb{E}\{\Gamma(k,0)'\Gamma(k,0)|\vartheta_{0}=\ell\}\|
\leq \beta_{1}\alpha^{k},\ \forall k\in\mathbb{N}.
\end{equation}
Let $\hat{V}\in{\mathbb{SH}_{\infty}^{n}}$ be arbitrary and satisfy $\|\hat{V}\|_{\infty}=1$.
By Proposition \ref{decomposition},
there are $\hat{V}^{+}$, $\hat{V}^{-}\in\mathbb{H}_{\infty}^{n+}$
such that $\hat{V}=\hat{V}^{+}-\hat{V}^{-}$ and $\max\{\|\hat{V}^{+}\|_{\infty}, \|\hat{V}^{-}\|_{\infty}\}\leq1$.
Thus, for every $k\in\mathbb{N}$, it is true that
\begin{equation}\label{721vv=v-}
\begin{aligned}
&\|\mathbb{E}\{\Gamma(k,0)'\hat{V}(\vartheta(k))
\Gamma(k,0)|\vartheta_{0}=\ell\}\|\\
\leq&
\|\mathbb{E}\{\Gamma(k,0)'\hat{V}^{+}(\vartheta(k))
\Gamma(k,0)|\vartheta_{0}=\ell\}\|\\
&+\|\mathbb{E}\{\Gamma(k,0)'\hat{V}^{-}(\vartheta(k))
\Gamma(k,0)|\vartheta_{0}=\ell\}\|.
\end{aligned}
\end{equation}
Let $\bar{V}\in\mathbb{H}^{n+}_{\infty}$ be arbitrary and satisfy $\|\bar{V}\|_{\infty}\leq1$.
Next, for every $k\in\mathbb{N}$, we will prove that
\begin{equation}\label{732V+legg|v}
\begin{split}
&\|\mathbb{E}\{\Gamma(k,0)'\bar{V}(\vartheta(k))\Gamma(k,0)|\vartheta_{0}=\ell\}\|\\
\leq&
\|\mathbb{E}\{\Gamma(k,0)'\Gamma(k,0)|\vartheta_{0}=\ell\}\|.
\end{split}
\end{equation}
Since $\|\bar{V}\|_{\infty}\leq1$, then for any $z\in\mathbb{R}^{n}$,
we make that $z'\bar{V}(\vartheta(k))z\leq z' I_{n\times n}z\ a.s.$,
i.e., $\bar{V}(\vartheta(k))\leq I_{n\times n}\ a.s.$.
Hence,
$\mathbb{E}\{\Gamma(k,0)'\bar{V}(\vartheta(k))\Gamma(k,0)|\vartheta_{0}=\ell\}
\leq\mathbb{E}\{\Gamma(k,0)'\Gamma(k,0)|\vartheta_{0}\!=\!\ell\}.$
\eqref{732V+legg|v} is then derived.
For every $k\in\mathbb{N}$, \eqref{gamma|var0leq}-\eqref{732V+legg|v} result in
$\|\mathbb{E}\{\Gamma(k,0)'\hat{V}(\vartheta(k))\Gamma(k,0)|\vartheta_{0}=\ell\}\|\leq\beta\alpha^{k}, $
where $\beta=2\beta_{1}$. Combining this with \eqref{TAkV},
one can deduce that
$\|\mathcal{T}_{A}^{k}(\hat{V})(\ell)\|\leq\beta\alpha^{k}$.
Therefore, $\|\mathcal{T}_{A}^{k}(\hat{V})\|_{\infty}\leq\beta\alpha^{k}.$
Then $\|\mathcal{T}_{A}^{k}\|\leq\beta\alpha^{k}$ is reached
since $\hat{V}\in\mathbb{SH}_{\infty}^{n}$ is arbitrary and $\|\hat{V}\|_{\infty}=1$.
\end{proof}

By letting $V=\mathcal{I}$ in \eqref{TAkV}, and following a similar argument as the proof of Theorem \ref{EMSSCiff},
we can prove the following corollary:
\begin{corollary}\label{coro742tai}
$\mathcal{T}_{A}$ generates an ESAE iff there are $\beta \geq 1$, $\alpha \in(0,1)$ such that
$\|\mathcal{T}_{A}^{k}(\mathcal{I})(\ell)\|\leq \beta\alpha^{k}$ $\mu\text{-}a.e.$ for every $k \in \mathbb{N}$.
\end{corollary}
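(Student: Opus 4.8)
The plan is to reduce everything to the operator machinery already assembled for Theorem \ref{EMSSCiff}, exploiting two structural facts: $\mathcal{T}_{A}$ is a positive operator on the ordered Banach space $\mathbb{SH}_{\infty}^{n}$, and the identity element $\mathcal{I}$ is an order unit for that space, in the sense that $\|P\|_{\infty}\le 1\iff -\mathcal{I}\le P\le \mathcal{I}$ for $P\in\mathbb{SH}_{\infty}^{n}$. For the easy ``only if'' direction, if $\mathcal{T}_{A}$ generates an ESAE then $\|T_{A}(k_{1},k_{2})\|\le\beta\alpha^{k_{2}-k_{1}}$, i.e.\ $\|\mathcal{T}_{A}^{k}\|\le\beta\alpha^{k}$ for every $k\in\mathbb{N}$, and since $\|\mathcal{I}\|_{\infty}=1$ this immediately gives $\|\mathcal{T}_{A}^{k}(\mathcal{I})\|_{\infty}\le\beta\alpha^{k}$, hence $\|\mathcal{T}_{A}^{k}(\mathcal{I})(\ell)\|\le\beta\alpha^{k}$ $\mu\text{-}a.e.$

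For the ``if'' direction, I would start from Lemma \ref{le620diedai} (equivalently \eqref{TAkV}) with $V=\mathcal{I}$, which gives $\mathbb{E}\{\Gamma(k,0)'\Gamma(k,0)\,|\,\vartheta_{0}=\ell\}=\mathcal{T}_{A}^{k}(\mathcal{I})(\ell)$ $\mu\text{-}a.e.$; the hypothesis is then precisely inequality \eqref{gamma|var0leq} (with $\beta_{1}=\beta$). From here one can copy the remaining part of the necessity argument in Theorem \ref{EMSSCiff} verbatim: for an arbitrary $\hat{V}\in\mathbb{SH}_{\infty}^{n}$ with $\|\hat{V}\|_{\infty}=1$, decompose it via Proposition \ref{decomposition}; observe that any $\bar{V}\in\mathbb{H}_{\infty}^{n+}$ with $\|\bar{V}\|_{\infty}\le 1$ satisfies $\bar{V}(\vartheta(k))\le I_{n\times n}$ $a.s.$ and therefore $\|\mathbb{E}\{\Gamma(k,0)'\bar{V}(\vartheta(k))\Gamma(k,0)\,|\,\vartheta_{0}=\ell\}\|\le\|\mathbb{E}\{\Gamma(k,0)'\Gamma(k,0)\,|\,\vartheta_{0}=\ell\}\|$; assemble via the triangle inequality \eqref{721vv=v-} and \eqref{TAkV} to get $\|\mathcal{T}_{A}^{k}(\hat{V})(\ell)\|\le 2\beta\alpha^{k}$ $\mu\text{-}a.e.$, whence $\|\mathcal{T}_{A}^{k}\|\le 2\beta\alpha^{k}$; and conclude, using $T_{A}(k_{1},k_{2})=\mathcal{T}_{A}^{k_{2}-k_{1}}$, that the ESAE estimate holds with constant $2\beta$ and rate $\alpha$. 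A slicker alternative that avoids the factor $2$: since $\mathcal{T}_{A}^{k}$ is positive (Remark \ref{SH1SHinf441}), $-\mathcal{I}\le P\le\mathcal{I}$ forces $-\mathcal{T}_{A}^{k}(\mathcal{I})\le\mathcal{T}_{A}^{k}(P)\le\mathcal{T}_{A}^{k}(\mathcal{I})$, so $\|\mathcal{T}_{A}^{k}(P)\|_{\infty}\le\|\mathcal{T}_{A}^{k}(\mathcal{I})\|_{\infty}$ and hence $\|\mathcal{T}_{A}^{k}\|=\|\mathcal{T}_{A}^{k}(\mathcal{I})\|_{\infty}$; the hypothesis then closes the argument directly.

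The only genuinely delicate point is the passage from the $\mu\text{-}a.e.$ pointwise bound on $\|\mathcal{T}_{A}^{k}(\mathcal{I})(\ell)\|$ to a uniform bound on the induced operator norm $\|\mathcal{T}_{A}^{k}\|\in\mathbb{B}(\mathbb{SH}_{\infty}^{n})$; this is exactly where positivity of $\mathcal{T}_{A}$ together with the order-unit property of $\mathcal{I}$ (equivalently, the explicit symmetric-part decomposition of Proposition \ref{decomposition} used in Theorem \ref{EMSSCiff}) is needed. Everything else is routine bookkeeping, so I expect no obstacle beyond keeping the $\mu\text{-}a.e.$ qualifiers and the conditioning on $\vartheta_{0}=\ell$ consistent throughout.
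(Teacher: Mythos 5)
Your proposal is correct and takes essentially the same route as the paper: the paper's proof is exactly to set $V=\mathcal{I}$ in \eqref{TAkV} and repeat the argument of Theorem \ref{EMSSCiff} (trivial ``only if'' direction from $\|\mathcal{I}\|_{\infty}=1$; ``if'' direction via Proposition \ref{decomposition} and the comparison with $\mathbb{E}\{\Gamma(k,0)'\Gamma(k,0)|\vartheta_{0}=\ell\}$), which is what your main argument does. Your alternative observation that positivity of $\mathcal{T}_{A}^{k}$ together with $-\mathcal{I}\leq P\leq\mathcal{I}$ for $\|P\|_{\infty}\leq1$ yields $\|\mathcal{T}_{A}^{k}\|=\|\mathcal{T}_{A}^{k}(\mathcal{I})\|_{\infty}$ is a valid, slightly cleaner shortcut that avoids the factor $2$, but it is a minor variant of the same idea rather than a different approach.
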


In the following theorem, the EMSSy of $(\mathbf{A};\mathcal{G})$ is characterized by operator $\mathcal{L}_{A}$.
\begin{theorem}\label{EMSSCiff2}
$(\mathbf{A};\mathcal{G})$ is EMSS iff $\mathcal{L}_{A}$ generates an ESCE.
\end{theorem}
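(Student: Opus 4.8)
\emph{Proof proposal.}\ The plan is to repeat the pattern of the proof of Theorem~\ref{EMSSCiff}, but now on the Banach space $\mathbb{SH}_{1}^{n}$ with the operator $\mathcal{L}_{A}$ and the forward sequence $\bar{X}(k)$ of Proposition~\ref{420propo} in the roles previously played by $\mathbb{SH}_{\infty}^{n}$, $\mathcal{T}_{A}$ and $\mathcal{T}_{A}^{k}(\mathcal{I})$. The bridge between the scalar quantity $\mathbb{E}\{\|\bar{x}(k)\|^{2}\}$ and the norm $\|\bar{X}(k)\|_{1}$ is obtained by integrating \eqref{BArxX} over $\Theta$: by Fubini's theorem and $\int_{\Theta}g(\ell|\vartheta(k-1))\mu(d\ell)=\mathcal{G}(\Theta|\vartheta(k-1))=1$ (and, for $k=0$, $\int_{\Theta}\nu(\ell)\mu(d\ell)=\hat{\mu}(\Theta)=1$) one gets $\mathbb{E}\{\bar{x}(k)\bar{x}(k)'\}=\int_{\Theta}\bar{X}(k,\ell)\mu(d\ell)$; taking traces and applying Proposition~\ref{235QtrqnormQ} to the nonnegative matrices $\bar{X}(k,\ell)$ then gives
\begin{equation*}
\|\bar{X}(k)\|_{1}\leq\mathbb{E}\{\|\bar{x}(k)\|^{2}\}=\int_{\Theta}tr(\bar{X}(k,\ell))\mu(d\ell)\leq n\,\|\bar{X}(k)\|_{1}.
\end{equation*}
Together with $\bar{X}(k)=\mathcal{L}_{A}^{k}(X_{0})$ (Proposition~\ref{420propo}(ii)), $X_{0}=\{x_{0}x_{0}'\nu(\ell)\}_{\ell\in\Theta}\in\mathbb{H}_{1}^{n+}$ and $\|X_{0}\|_{1}=\|x_{0}\|^{2}$, these two inequalities are essentially the only tools needed.

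\emph{Sufficiency.} If $\mathcal{L}_{A}$ generates an ESCE then $\|\mathcal{L}_{A}^{k}\|=\|S_{A}(k,0)\|\leq\beta\alpha^{k}$, so $\|\bar{X}(k)\|_{1}\leq\beta\alpha^{k}\|x_{0}\|^{2}$ and the right-hand inequality above yields $\mathbb{E}\{\|\bar{x}(k)\|^{2}\}\leq n\beta\alpha^{k}\|x_{0}\|^{2}$ for every $k$, every $x_{0}$, and every admissible $\vartheta_{0}$ (note that $\|X_{0}\|_{1}$ does not depend on the density $\nu$); taking $n\beta\geq1$ as the constant proves EMSS.

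\emph{Necessity.} Assume EMSS. The left-hand inequality above gives $\|\mathcal{L}_{A}^{k}(X_{0})\|_{1}\leq\beta\alpha^{k}\|x_{0}\|^{2}=\beta\alpha^{k}\|X_{0}\|_{1}$ for $X_{0}=\{x_{0}x_{0}'\nu(\ell)\}$. Since the constants $\beta,\alpha$ in Definition~\ref{def596emssc}(ii) are independent of $\vartheta_{0}$, this bound holds for \emph{every} admissible initial distribution, i.e.\ for every strictly positive $\mu$-density $\nu$; by homogeneity, and by approximating an arbitrary nonnegative $\mu$-integrable weight with strictly positive ones (here $\sigma$-finiteness of $\mu$ and Assumption~\ref{Ass2902} enter), we obtain $\|\mathcal{L}_{A}^{k}(\{Mw(\ell)\})\|_{1}\leq\beta\alpha^{k}\|\{Mw(\ell)\}\|_{1}$ for every $M\in\mathbb{S}^{n}$ with $M\geq0$ and every nonnegative $w$ with $\int_{\Theta}w(\ell)\mu(d\ell)<\infty$. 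I then bootstrap to the whole cone: for $V\in\mathbb{H}_{1}^{n+}$ one has $0\leq V(\ell)\leq\|V(\ell)\|I_{n\times n}=\sum_{i=1}^{n}\|V(\ell)\|\,e_{i}e_{i}'$ with $\|V(\cdot)\|$ nonnegative and $\mu$-integrable, so positivity/monotonicity of $\mathcal{L}_{A}^{k}$ on $\mathbb{SH}_{1}^{n}$ (Remark~\ref{SH1SHinf441}) together with Proposition~\ref{235QtrqnormQ} give $\|\mathcal{L}_{A}^{k}(V)\|_{1}\leq n\sum_{i=1}^{n}\|\mathcal{L}_{A}^{k}(\{\|V(\ell)\|e_{i}e_{i}'\})\|_{1}\leq n^{2}\beta\alpha^{k}\|V\|_{1}$. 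Finally Proposition~\ref{decomposition} removes the positivity restriction at the cost of a factor $2$, so $\|S_{A}(k_{2},k_{1})\|=\|\mathcal{L}_{A}^{k_{2}-k_{1}}\|\leq 2n^{2}\beta\,\alpha^{k_{2}-k_{1}}$ for all $k_{1}\leq k_{2}$, i.e.\ $\mathcal{L}_{A}$ generates an ESCE.

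\emph{Where the difficulty lies.} The sufficiency direction is routine. The real work is in necessity: the EMSS hypothesis \emph{a priori} controls $\|\mathcal{L}_{A}^{k}\|$ only on the very thin family of elements of the form $x_{0}x_{0}'\nu$, and this must be promoted to a bound on the operator norm over all of $\mathbb{SH}_{1}^{n}$. The three steps above — exploiting the arbitrariness of the initial distribution to reach all scalar weights, using positivity of $\mathcal{L}_{A}$ with Proposition~\ref{235QtrqnormQ} to dominate a general $V\in\mathbb{H}_{1}^{n+}$ by sums of weighted rank-one matrices, and the symmetric decomposition of Proposition~\ref{decomposition} — are precisely what closes this gap; the only delicate point is to verify that the passage from strictly positive to general nonnegative weights is legitimate.
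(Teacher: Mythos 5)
Your proof is correct and takes essentially the same approach as the paper's: sufficiency via $\mathbb{E}\{\|\bar{x}(k)\|^{2}\}=\int_{\Theta}tr\{\mathcal{L}_{A}^{k}(X_{0})(s)\}\mu(ds)$ together with Proposition~\ref{235QtrqnormQ}, and necessity by dominating a positive $V$ by $\{\|V(\ell)\|I_{n\times n}\}$, splitting it into the rank-one pieces $\{\|V(\ell)\|e_{i}e_{i}'\}$ reinterpreted as initial data $x_{0}^{i}(x_{0}^{i})'\nu$, and removing the positivity restriction with Proposition~\ref{decomposition} (same final constant $2n^{2}\beta$). The only noticeable difference is your explicit limiting argument passing from strictly positive densities to general nonnegative weights, a point the paper's proof handles implicitly by substituting $\nu(\ell)=\|\bar{P}(\ell)\|/\|\bar{P}\|_{1}$ directly into \eqref{880emss}.
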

\begin{proof}
For every $k\in{\mathbb{N}}$,
it follows from (ii) of Proposition \ref{420propo} that
\begin{equation}\label{xk2853in}
\begin{aligned}
\mathbb{E}\{\|\bar{x}(k)\|^{2}\}\!=\!\langle  \bar{X}(k); \mathcal{I}\rangle \!=\!&\int_{\!\Theta\!}\!tr\{\mathcal{L}_{A}^{k}(X_{0})(s)\}\mu (ds).
\end{aligned}
\end{equation}
Sufficiency.
By Proposition \ref{235QtrqnormQ}, for every $k\in\mathbb{N}$, we have that
$\int_{\Theta}tr\{\mathcal{L}_{A}^{k}(X_{0})(s)\}\mu (ds)\leq n\int_{\Theta}\|\mathcal{L}_{A}^{k}(X_{0})(s)\|\mu (ds)$.
So, it can be made that $\mathbb{E}\{\|\bar{x}(k)\|^{2}\}\leq n\|x_{0}\|^{2}\|\mathcal{L}_{A}^{k}\|$.
Therefore, $(\mathbf{A};\mathcal{G})$ is EMSS. \\
Necessity.
By Proposition \ref{235QtrqnormQ}, one derives that
$\|\mathcal{L}_{A}^{k}(X_{0})\|_{1}$ $\leq\int_{\Theta}tr\{\mathcal{L}_{A}^{k}(X_{0})(s)\}\mu (ds).$
From this fact, \eqref{xk2853in}, and the assumption that $(\mathbf{A};\mathcal{G})$ is EMSS,
one can get that there are $\beta \geq 1$, $\alpha \in(0,1)$ such that
\begin{equation}\label{880emss}
\|\mathcal{L}_{A}^{k}(X_{0})\|_{1}\leq\beta\alpha^{k}\|x_{0}\|^{2}\leq n\beta\alpha^{k}\|X_{0}\|_{1}
\end{equation}
holds for every $k\in\mathbb{N}$, $x_{0} \in \mathbb{R}^{n}$.
For any $P\in\mathbb{SH}_{1}^{n}$, by Proposition \ref{decomposition}, there exist $P^{+},\ P^{-}\in{\mathbb{H}_{1}^{n+}}$ such that
$P=P^{+}-P^{-}$ and $\|P\|_{1}\geq \max\{\|P^{+}\|_{1},\|P^{-}\|_{1}\}$.
Therefore,
$\|\mathcal{L}_{A}^{k}(P)\|_{1}\leq\|\mathcal{L}_{A}^{k}(P^{+})\|_{1}+\|\mathcal{L}_{A}^{k}(P^{-})\|_{1}.$
Next, we will show that for any $\bar{P}\in\mathbb{H}_{1}^{n+}$, there are $\beta_{1} \geq 1$, $ \alpha \in(0,1)$ such that
\begin{equation}\label{890barPLEQN}
\|\mathcal{L}_{A}^{k}(\bar{P})\|_{1}\leq \beta_{1}\alpha^{k}\|\bar{P}\|_{1}, \ \forall k\in\mathbb{N}.
\end{equation}
When $\|\bar{P}\|_{1}=0$, \eqref{890barPLEQN} is obviously trivial. Let $\|\bar{P}\|_{1}\neq 0$
and set $\hat{P}=\{\hat{P}(\ell)\}_{\ell\in\Theta}$ with $\hat{P}(\ell)=\|\bar{P}(\ell)\| I_{n\times n}$.
It is obvious that $\|\bar{P}\|_{1}=\|\hat{P}\|_{1}$ and $\bar{P}\leq \hat{P}$.
Hence $0 \leq \mathcal{L}_{A}^{k}(\bar{P}) \leq \mathcal{L}_{A}^{k}(\hat{P})$, which yields that
$\|\mathcal{L}_{A}^{k}(\bar{P})\|_{1} \leq \|\mathcal{L}_{A}^{k}(\hat{P})\|_{1}.$
On the other hand, $\hat{P}$ can be written as $\hat{P}=\sum_{i=1}^{n} \hat{P}_{i}$
with $\hat{P}_{i}=\{\hat{P}_{i}(\ell)\}_{\ell\in\Theta}$,
where $\hat{P}_{i}(\ell)=\|\bar{P}(\ell)\|e_{i}e_{i}'$ with $e_{i}'=(0,0,\cdots,0,1,0,\cdots,0)$
($e_{i}\in \mathbb{R}^{n}$ is the vector where the $i$-th term is 1 and the rest is 0).
Now we arrive at
\begin{equation}\label{913KAHATPII}
\|\mathcal{L}_{A}^{k}(\hat{P})\|_{1}\leq \sum_{i=1}^{n}\|\mathcal{L}_{A}^{k}(\hat{P}_{i})\|_{1},\ \forall k\in\mathbb{N}.
\end{equation}
As for $\hat{P}_{i}$, $i\in \overline{1,n}$, notice that $\|\hat{P}_{i}\|_{1}=\|\bar{P}\|_{1}$.
In addition, there is $\nu\in\mathbb{H}_{1}^{n+}$ given by $\nu(\ell)={\|\bar{P}(\ell)\|}/{\|\bar{P}\|_{1}}$
and $x_{0}^{i}=\sqrt{\|\bar{P}\|_{1}}e_{i}\in\mathbb{R}^{n}$
such that $\hat{P}_{i}=X_{0}^{i}$ with $X_{0}^{i}(\ell)=x_{0}^{i} (x_{0}^{i})'\nu(\ell)$.
So, one can rewrite \eqref{913KAHATPII} as
$\|\mathcal{L}_{A}^{k}(\hat{P})\|_{1}$ $\leq \sum_{i=1}^{n}\|\mathcal{L}_{A}^{k}(X_{0}^{i})\|_{1}.$
Moreover, it can be directly drawn that $\|X_{0}^{i}\|_{1}=\|\hat{P}_{i}\|_{1}=\|\bar{P}\|_{1}$.
Then, according to \eqref{880emss}, for every $i\in\overline{1,n}$, there are $\beta \geq 1$, $\alpha \in(0,1)$ such that
$\|\mathcal{L}_{A}^{k}(X_{0}^{i})\|_{1}\leq n\beta\alpha^{k}\|\bar{P}\|_{1}$
holds for every $k \in \mathbb{N}$, $x_{0}^{i} \in \mathbb{R}^{n}$.
Further, $\|\mathcal{L}_{A}^{k}(\bar{P})\|_{1} \leq n^{2}\beta\alpha^{k}\|\bar{P}\|_{1}$,
i.e.,
\eqref{890barPLEQN} is proved with $\beta_{1}=n^{2}\beta$.
Consequently,
$\|\mathcal{L}_{A}^{k}(P^{+})\|_{1}\leq \beta_{1}\alpha^{k}\|P^{+}\|_{1}\leq \beta_{1}\alpha^{k}\|P\|_{1}$
and $\|\mathcal{L}_{A}^{k}(P^{-})\|_{1}\leq \beta_{1}\alpha^{k}\|P\|_{1},\ \forall k\in\mathbb{N}.$
Now we conclude that there are $\beta_{2}=2 n^{2}\beta \geq 1$, $\alpha \in(0,1)$ such that
$\|\mathcal{L}_{A}^{k}(P)\|_{1}\leq \beta_{2}\alpha^{k}\|P\|_{1}$
holds for every $k \in \mathbb{N}$, $P\in\mathbb{SH}_{1}^{n}$.
Thus, $\mathcal{L}_{A}$ generates an ESCE.
\end{proof}

Noticing the implications that $\mathcal{T}_{A}$ ($\mathcal{L}_{A}$) generates an ESAE (ESCE)
iff $r_{\sigma}(\mathcal{T}_{A})<1$ ($r_{\sigma}(\mathcal{L}_{A})<1$)(see \cite{BookDragan2010}, page 33),
from Theorems \ref{EMSSCiff} and \ref{EMSSCiff2} we can make the following theorem,
which provides relatively complete and unified methods for determining EMSSy-C and EMSSy of $(\mathbf{A};\mathcal{G})$, respectively.
\begin{theorem}\label{spectralcriterion}
$(\mathbf{A};\mathcal{G})$ is EMSS-C iff $r_{\sigma}(\mathcal{T}_{A})<1$;
$(\mathbf{A};\mathcal{G})$ is EMSS iff $r_{\sigma}(\mathcal{L}_{A})<1$.
\end{theorem}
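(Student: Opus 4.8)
The plan is to reduce the statement to the two characterizations already in hand and the standard discrete‑time spectral test for a single bounded operator. By Theorem \ref{EMSSCiff}, $(\mathbf{A};\mathcal{G})$ is EMSS‑C iff $\mathcal{T}_{A}$ generates an ESAE, and by Theorem \ref{EMSSCiff2}, $(\mathbf{A};\mathcal{G})$ is EMSS iff $\mathcal{L}_{A}$ generates an ESCE. Hence it suffices to establish, for the relevant bounded operators, the equivalence ``generates an exponentially stable evolution'' $\iff$ ``spectral radius $<1$''. This equivalence is quoted from \cite{BookDragan2010}, and for completeness I would record the short argument.

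First I would use that the model $(\mathbf{A};\mathcal{G})$ is time‑homogeneous, so the evolution operators depend only on the length of the interval: by their very definition, $S_{A}(k_{2},k_{1})=\mathcal{L}_{A}^{\,k_{2}-k_{1}}$ and $T_{A}(k_{1},k_{2})=\mathcal{T}_{A}^{\,k_{2}-k_{1}}$. Therefore $\mathcal{L}_{A}$ generates an ESCE iff there exist $\beta\geq1$, $\alpha\in(0,1)$ with $\|\mathcal{L}_{A}^{\,k}\|\leq\beta\alpha^{k}$ for all $k\in\mathbb{N}$, and $\mathcal{T}_{A}$ generates an ESAE iff $\|\mathcal{T}_{A}^{\,k}\|\leq\beta\alpha^{k}$ for all $k\in\mathbb{N}$ for some such $\beta,\alpha$. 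Thus the whole theorem reduces to the following elementary fact: if $\mathcal{N}\in\mathbb{B}(\mathbb{M})$ for a Banach space $\mathbb{M}$, then $\|\mathcal{N}^{k}\|\leq\beta\alpha^{k}$ for some $\beta\geq1$, $\alpha\in(0,1)$ and all $k\in\mathbb{N}$ if and only if $r_{\sigma}(\mathcal{N})<1$.

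To prove this fact I would invoke Gelfand's spectral radius formula $r_{\sigma}(\mathcal{N})=\lim_{k\to\infty}\|\mathcal{N}^{k}\|^{1/k}$. If $\|\mathcal{N}^{k}\|\leq\beta\alpha^{k}$, then $\|\mathcal{N}^{k}\|^{1/k}\leq\beta^{1/k}\alpha\to\alpha$, so $r_{\sigma}(\mathcal{N})\leq\alpha<1$. Conversely, if $r_{\sigma}(\mathcal{N})<1$, choose $\alpha$ with $r_{\sigma}(\mathcal{N})<\alpha<1$; by Gelfand's formula there is $N\in\mathbb{N}$ with $\|\mathcal{N}^{k}\|\leq\alpha^{k}$ for all $k\geq N$, and putting $\beta:=\max\{1,\ \max_{0\leq k< N}\|\mathcal{N}^{k}\|\alpha^{-k}\}$ yields $\|\mathcal{N}^{k}\|\leq\beta\alpha^{k}$ for every $k\in\mathbb{N}$. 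I would then apply this twice: with $\mathcal{N}=\mathcal{T}_{A}$ on the Banach space $\mathbb{SH}_{\infty}^{n}$ and with $\mathcal{N}=\mathcal{L}_{A}$ on the Banach space $\mathbb{SH}_{1}^{n}$ — both being bounded linear operators on those spaces by Proposition \ref{operatorsspace} together with Remark \ref{SH1SHinf441} — and combine with Theorems \ref{EMSSCiff} and \ref{EMSSCiff2} to obtain both equivalences of the statement.

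I do not anticipate a genuine obstacle here: the theorem is essentially a concatenation of results already proved in the excerpt. The only points requiring a little care are that $\mathbb{SH}_{\infty}^{n}$ and $\mathbb{SH}_{1}^{n}$ are bona fide Banach spaces and that $\mathcal{T}_{A}$, $\mathcal{L}_{A}$ restrict to bounded operators on them (so that Gelfand's formula is applicable) — which is exactly the content of Remark \ref{SH1SHinf441} — and the elementary bookkeeping with the constants $\beta$ and $\alpha$ in the converse direction.
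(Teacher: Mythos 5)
Your proposal is correct and follows essentially the same route as the paper: the paper simply combines Theorems \ref{EMSSCiff} and \ref{EMSSCiff2} with the standard equivalence "exponentially stable evolution generated by a bounded operator iff spectral radius $<1$", which it cites from \cite{BookDragan2010} (p.~33) rather than proving. Your only addition is to spell out that cited fact via Gelfand's formula, which is exactly the standard argument behind the citation.
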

\begin{remark}\label{797reborenoeq}
Consider $(\mathbf{A};\mathcal{G})$.
When specializing Theorem \ref{spectralcriterion} to finite MJLSs, EMSSy-C is equivalent to EMSSy.
Indeed, in this finite case, $\mathcal{T}_{A}$ coincides with the adjoint $\mathcal{L}_{A}^{*}$ of $\mathcal{L}_{A}$.
Hence, $r_{\sigma}(\mathcal{L}_{A})<1$ iff $r_{\sigma}(\mathcal{T}_{A})<1$,
and the equivalence between these two kinds of stabilities of $(\mathbf{A};\mathcal{G})$ can be gained by Theorem \ref{spectralcriterion}.
The same result in the continuous-time case for finite MJLSs could be consulted in Theorem 3.2.4 of \cite{BookDragan2014} (page 135).
However, for the case where the Markov chain takes values in $\Theta$,
establishing the equivalence between these two kinds of stabilities is not a trivial task.
Recalling Proposition \ref{co973EMSSEMSSC},
it shows that $(\mathbf{A};\mathcal{G})$ is EMSS provided $(\mathbf{A};\mathcal{G})$ is EMSS-C.
We conjecture that the converse result still holds under the assumptions considered in this paper.
This problem is challenging but meaningful, and requires a more thorough discussion.
\end{remark}

\begin{conjecture}\label{conjecture}
$(\mathbf{A};\mathcal{G})$ is EMSS iff $(\mathbf{A};\mathcal{G})$ is EMSS-C.
\end{conjecture}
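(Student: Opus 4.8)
By Theorem~\ref{spectralcriterion}, one implication is already Proposition~\ref{co973EMSSEMSSC}, and the full conjecture is equivalent to the spectral identity $r_{\sigma}(\mathcal{L}_{A})=r_{\sigma}(\mathcal{T}_{A})$. The plan is therefore to realize $\mathcal{T}_{A}$ as the Banach-space adjoint of $\mathcal{L}_{A}$ and then invoke the classical fact that a bounded linear operator on a Banach space and its adjoint have identical spectra. This is the same mechanism used for finite MJLSs (recalled in Remark~\ref{797reborenoeq}) and, via the $\ell^{1}$-$\ell^{\infty}$ pairing, for countable-state MJLSs; the substance here is pushing it through the present measure-theoretic framework.

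The key step, which I expect to be the main obstacle, is to show that the bilinear form $\langle\,\cdot\,;\cdot\,\rangle$ of the paper realizes $(\mathbb{SH}_{\infty}^{n},\|\cdot\|_{\infty})$ as a space isomorphic to the topological dual $(\mathbb{SH}_{1}^{n})^{*}$, through $U\mapsto\langle\,\cdot\,;U\rangle$. Boundedness of this map and its injectivity are routine, the latter using the $\sigma$-finiteness of $\mu$ together with sets of finite measure. Surjectivity is the delicate part: it is precisely the vector-valued $L^{1}$-$L^{\infty}$ duality, which holds here because $\mu$ is $\sigma$-finite and $\mathbb{S}^{n}$ is finite-dimensional (hence has the Radon--Nikodym property), so that every bounded linear functional on $\mathbb{SH}_{1}^{n}=L^{1}(\mu;\mathbb{S}^{n})$ is represented by a unique $U\in L^{\infty}(\mu;\mathbb{S}^{n})=\mathbb{SH}_{\infty}^{n}$; one must also verify that the essential-supremum space used in the paper genuinely coincides with $L^{\infty}(\mu;\mathbb{S}^{n})$ (completing $\mu$ if needed). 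The identification will be an isomorphism but not an isometry, since the operator norm and the trace norm on $\mathbb{S}^{n}$ differ by a factor depending only on $n$; this is harmless for spectral radii.

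Granting this, relation (iii) of Proposition~\ref{operatorsspace}, namely $\langle\mathcal{L}_{A}(V);U\rangle=\langle V;\mathcal{T}_{A}(U)\rangle$ for all $V\in\mathbb{SH}_{1}^{n}$ and $U\in\mathbb{SH}_{\infty}^{n}$, says exactly that under the identification $(\mathbb{SH}_{1}^{n})^{*}\cong\mathbb{SH}_{\infty}^{n}$ the operator $\mathcal{T}_{A}$ is the Banach-space adjoint $\mathcal{L}_{A}^{*}$ of $\mathcal{L}_{A}\in\mathbb{B}(\mathbb{SH}_{1}^{n})$ --- surjectivity in the previous step is what guarantees that this is the adjoint on all of the dual, not merely on the copy of $\mathbb{SH}_{\infty}^{n}$. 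Since any $T\in\mathbb{B}(X)$ is boundedly invertible iff $T^{*}$ is, one gets $\sigma(\mathcal{T}_{A})=\sigma(\mathcal{L}_{A}^{*})=\sigma(\mathcal{L}_{A})$, and in particular $r_{\sigma}(\mathcal{T}_{A})=r_{\sigma}(\mathcal{L}_{A})$. Combining with Theorem~\ref{spectralcriterion} yields: $(\mathbf{A};\mathcal{G})$ is EMSS $\Leftrightarrow r_{\sigma}(\mathcal{L}_{A})<1 \Leftrightarrow r_{\sigma}(\mathcal{T}_{A})<1 \Leftrightarrow (\mathbf{A};\mathcal{G})$ is EMSS-C. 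Thus no probabilistic input beyond Theorems~\ref{EMSSCiff}, \ref{EMSSCiff2} and \ref{spectralcriterion} seems necessary; the entire difficulty is the functional-analytic duality in the second step.
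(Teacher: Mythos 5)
You should first note a mismatch with the premise of this exercise: the paper does not prove this statement at all. It is posed as an open problem (Conjecture~\ref{conjecture}), and Remark~\ref{797reborenoeq} explicitly stops at the finite-state case, where $\mathcal{T}_{A}=\mathcal{L}_{A}^{*}$ is immediate, declaring the general Borel case ``not a trivial task.'' So there is no proof in the paper to compare yours against; the only established part is the one direction in Proposition~\ref{co973EMSSEMSSC}.

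On its merits, your proposal looks essentially sound, and it is exactly the natural way to settle the conjecture within the paper's own framework. The reduction is legitimate: by Theorem~\ref{spectralcriterion} (built on Theorems~\ref{EMSSCiff} and \ref{EMSSCiff2}), the conjecture is implied by $r_{\sigma}(\mathcal{L}_{A})=r_{\sigma}(\mathcal{T}_{A})$. The adjoint identity you need is already available as Proposition~\ref{operatorsspace}(iii) (its Fubini step, requiring joint measurability of $g$, is part of the cited framework of Costa--Figueiredo 2014), so the whole burden is indeed the surjectivity of $U\mapsto\langle\cdot\,;U\rangle$ from $\mathbb{SH}_{\infty}^{n}$ onto $(\mathbb{SH}_{1}^{n})^{*}$. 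That is the scalar $\sigma$-finite $L^{1}$--$L^{\infty}$ duality applied coordinatewise in a trace-orthonormal basis of $\mathbb{S}^{n}$ (finite-dimensional fibers, so no genuine Radon--Nikodym-property issue arises), provided one treats the elements of $\mathbb{H}_{1}^{n}$, $\mathbb{H}_{\infty}^{n}$ as $\mu$-equivalence classes, as the Banach-space claim in the paper implicitly does. Two small refinements you should make when writing it up: (1) the spaces are real, so either complexify before invoking $\sigma(T^{*})=\sigma(T)$, or avoid spectra of adjoints altogether by arguing through norms --- since $J$ and $J^{-1}$ are bounded and $\|(\mathcal{L}_{A}^{k})^{*}\|=\|\mathcal{L}_{A}^{k}\|$, one gets $\|\mathcal{T}_{A}^{k}\|\leq c\,\|\mathcal{L}_{A}^{k}\|\leq c^{2}\|\mathcal{T}_{A}^{k}\|$ with $c$ independent of $k$, and the Gelfand formula gives $r_{\sigma}(\mathcal{T}_{A})=r_{\sigma}(\mathcal{L}_{A})$ directly, which also disposes of the isomorphism-versus-isometry point you raise; (2) check that the adjoint relation restricted to the symmetric subspaces identifies $\mathcal{T}_{A}$ with $J^{-1}\mathcal{L}_{A}^{*}J$ on all of $(\mathbb{SH}_{1}^{n})^{*}$, i.e.\ that every bounded functional on $\mathbb{SH}_{1}^{n}$ is represented by a \emph{symmetric} $U$, which the coordinatewise construction gives automatically. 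With these details written out, your argument would resolve the conjecture affirmatively using only Theorems~\ref{EMSSCiff}, \ref{EMSSCiff2}, \ref{spectralcriterion} and functional-analytic duality --- a genuine step beyond what the paper establishes, not a rederivation of it.
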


In the following, we will present the equivalent conditions for $(\mathbf{A};\mathcal{G})$ to be EMSS-C
in terms of the existence of uniformly positive definite solutions of coupled Lyapunov-type equations or inequalities in Banach spaces.
\begin{theorem}\label{906theorem31234}
The following assertions are equivalent:\\
(i)~$(\mathbf{A};\mathcal{G})$ is EMSS-C; \\
(ii)~Given any $V\in\mathbb{H}_{\infty}^{n+*}$, there exists $U\in\mathbb{H}_{\infty}^{n+*}$ such that
\begin{equation}\label{Lyapunov902}
U(\ell)-\mathcal{T}_{A}(U)(\ell)=V(\ell)\  \mu\text{-}a.e.;
\end{equation}
(iii)~There exists $U\in\mathbb{H}_{\infty}^{n+*}$ such that
$U(\ell)-\mathcal{T}_{A}(U)(\ell)= I_{n\times n}$ $\mu\text{-}a.e.$;\\
(iv)~There is $U\in\mathbb{H}_{\infty}^{n+*}$ and $\xi>0$ such that
$U(\ell)-\mathcal{T}_{A}(U)(\ell)\geq \xi I_{n\times n}$ $\mu\text{-}a.e.$.
\end{theorem}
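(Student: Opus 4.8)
The plan is to prove the four assertions equivalent by running the cycle $(i)\Rightarrow(ii)\Rightarrow(iii)\Rightarrow(iv)\Rightarrow(i)$.

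For $(i)\Rightarrow(ii)$, I would first invoke Theorem~\ref{EMSSCiff} (equivalently Theorem~\ref{spectralcriterion}) to pass from EMSS-C to the fact that $\mathcal{T}_{A}$ generates an ESAE, so $\|\mathcal{T}_{A}^{k}\|\leq\beta\alpha^{k}$ for some $\beta\geq1$, $\alpha\in(0,1)$. Then $\sum_{k=0}^{\infty}\mathcal{T}_{A}^{k}$ converges in $\mathbb{B}(\mathbb{SH}_{\infty}^{n})$ and equals $(\mathcal{I}_{\mathbb{SH}_{\infty}^{n}}-\mathcal{T}_{A})^{-1}$. Given $V\in\mathbb{H}_{\infty}^{n+*}$, set $U:=\sum_{k=0}^{\infty}\mathcal{T}_{A}^{k}(V)$; the series converges in $\mathbb{SH}_{\infty}^{n}$ since $\sum_{k}\|\mathcal{T}_{A}^{k}(V)\|_{\infty}\leq\|V\|_{\infty}\sum_{k}\beta\alpha^{k}<\infty$, and continuity of $\mathcal{T}_{A}$ yields $U-\mathcal{T}_{A}(U)=V$ $\mu\text{-}a.e.$ It then remains to check $U\in\mathbb{H}_{\infty}^{n+*}$: each partial sum lies in the convex cone $\mathbb{H}_{\infty}^{n+}$ because $\mathcal{T}_{A}$ is a positive operator (Remark~\ref{SH1SHinf441}) and $V\geq0$, and $\mathbb{H}_{\infty}^{n+}$ is closed in $\mathbb{SH}_{\infty}^{n}$, so $U\in\mathbb{H}_{\infty}^{n+}$; moreover $U\geq V\geq\alpha' I_{n\times n}$ for some $\alpha'>0$, whence $U\in\mathbb{H}_{\infty}^{n+*}$. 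The implications $(ii)\Rightarrow(iii)$ and $(iii)\Rightarrow(iv)$ are immediate: in $(ii)$ take $V=\mathcal{I}\in\mathbb{H}_{\infty}^{n+*}$, and $(iii)$ is $(iv)$ with $\xi=1$.

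The substantive step is $(iv)\Rightarrow(i)$. Suppose $U\in\mathbb{H}_{\infty}^{n+*}$ and $\xi>0$ satisfy $U(\ell)-\mathcal{T}_{A}(U)(\ell)\geq\xi I_{n\times n}$ $\mu\text{-}a.e.$ Choose $\alpha_{0}>0$ with $U\geq\alpha_{0}\mathcal{I}$; since $U(\ell)\leq\|U\|_{\infty}I_{n\times n}$ $\mu\text{-}a.e.$, we get $\mathcal{T}_{A}(U)(\ell)\leq U(\ell)-\xi I_{n\times n}\leq(1-\xi/\|U\|_{\infty})U(\ell)=:q\,U(\ell)$ $\mu\text{-}a.e.$, and positivity of $\mathcal{T}_{A}(U)$ forces $\|U\|_{\infty}\geq\xi$, hence $q\in[0,1)$. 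By positivity of $\mathcal{T}_{A}$ and induction, $\mathcal{T}_{A}^{k}(U)\leq q^{k}U$ for all $k\in\mathbb{N}$ (the degenerate case $q=0$, where $\mathcal{T}_{A}(U)=0$ and thus $\mathcal{T}_{A}^{k}(\mathcal{I})=0$ for $k\geq1$, is handled separately and trivially). Then $\alpha_{0}\mathcal{T}_{A}^{k}(\mathcal{I})\leq\mathcal{T}_{A}^{k}(U)\leq q^{k}\|U\|_{\infty}\mathcal{I}$, so, using that $\mathcal{T}_{A}^{k}(\mathcal{I})(\ell)\geq0$ is symmetric, $\|\mathcal{T}_{A}^{k}(\mathcal{I})(\ell)\|\leq(\|U\|_{\infty}/\alpha_{0})q^{k}$ $\mu\text{-}a.e.$ with $\|U\|_{\infty}/\alpha_{0}\geq1$. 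Corollary~\ref{coro742tai} then shows $\mathcal{T}_{A}$ generates an ESAE, and Theorem~\ref{EMSSCiff} gives that $(\mathbf{A};\mathcal{G})$ is EMSS-C.

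I expect $(iv)\Rightarrow(i)$ to be the main obstacle, since it is where one must convert an order inequality for the supersolution $U$ into a geometric decay rate for $\mathcal{T}_{A}^{k}(\mathcal{I})$ while keeping track that all comparisons hold only $\mu\text{-}a.e.$; the remaining links in the cycle are soft (Neumann series in $\mathbb{B}(\mathbb{SH}_{\infty}^{n})$, closedness of the cone $\mathbb{H}_{\infty}^{n+}$, and trivial specializations).
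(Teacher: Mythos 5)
Your proposal is correct and follows essentially the same route as the paper: Neumann series $(\mathcal{I}_{\mathbb{SH}_{\infty}^{n}}-\mathcal{T}_{A})^{-1}(V)=\sum_{k}\mathcal{T}_{A}^{k}(V)$ for $(i)\Rightarrow(ii)$, trivial specializations for $(ii)\Rightarrow(iii)\Rightarrow(iv)$, and for $(iv)\Rightarrow(i)$ the same induction giving geometric decay of $\mathcal{T}_{A}^{k}(U)$, hence of $\mathcal{T}_{A}^{k}(\mathcal{I})$, concluded via Corollary~\ref{coro742tai} and Theorem~\ref{EMSSCiff}. The only cosmetic difference is that the paper takes $\delta>\|U\|_{\infty}$ strictly so that $\alpha=1-\xi/\delta\in(0,1)$ automatically, whereas you use $q=1-\xi/\|U\|_{\infty}$ and handle the degenerate case $q=0$ separately.
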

\begin{proof}
First, we prove $(i)\rightarrow (ii)$.
It follows from Theorem \ref{spectralcriterion} that $r_{\sigma}(\mathcal{T}_{A})<1$ when $(\mathbf{A};\mathcal{G})$ is EMSS-C.
Hence, for any $V\in\mathbb{H}_{\infty}^{n+*}$, $U=(\mathcal{I}_{\mathbb{SH}_{\infty}^{n}}-\mathcal{T}_{A})^{-1}(V)
=\sum_{k=0}^{\infty}\mathcal{T}_{A}^{k}(V)\in\mathbb{H}_{\infty}^{n+*}$ satisfies \eqref{Lyapunov902}.
The implications $(ii)\rightarrow (iii)\rightarrow (iv)$ can be got from
the standard arguments concerning the bounded linear operator on Banach space $\mathbb{H}_{\infty}^{n}$, and the details are omitted.
Now we prove $(iv)\rightarrow (i)$.
Set $V=U-\mathcal{T}_{A}(U)$.
If $(iv)$ holds, then $\delta I_{n\times n}> U(\ell)\geq \xi I_{n\times n}$ $\mu\text{-}a.e.$, where $\delta>\|U\|_{\infty}$.
Let $\alpha=1-\frac{\xi}{\delta}$. Obviously, $\alpha\in(0,1)$.
Next, for every $k\in\mathbb{N}^{+}$, we will show that
\begin{equation}\label{1074taku}
\mathcal{T}_{A}^{k}(U)(\ell)\leq \alpha^{k}U(\ell)
\end{equation}
holds by mathematical induction.
For $k=1$, since $U(\ell)-\mathcal{T}_{A}(U)(\ell)\geq\xi I_{n\times n}>\frac{\xi}{\delta}U(\ell)$,
it implies that \eqref{1074taku} holds for $k=1$.
We assume that \eqref{1074taku} is true for $k=i\in\mathbb{N}^{+}$.
Considering that $\mathcal{T}_{A}$ is a positive operator on $\mathbb{SH}_{\infty}^{n}$,
thus
$\mathcal{T}_{A}^{i}(U)(\ell)-\mathcal{T}_{A}^{i+1}(U)(\ell)
\geq \xi\mathcal{T}_{A}^{i}(\mathcal{I})(\ell)\geq \frac{\xi}{\delta} \mathcal{T}_{A}^{i}(U)(\ell).$
Combining this with the assumption of \eqref{1074taku} holding for $k=i$,
we get that $\mathcal{T}_{A}^{i+1}(U)(\ell)\leq \alpha\mathcal{T}_{A}^{i}(U)(\ell)
\leq \alpha^{i+1}U(\ell)$.
Therefore, for every $k\in\mathbb{N}^{+}$, \eqref{1074taku} is valid.
So, it can be confirmed that
$\xi \mathcal{T}_{A}^{k}(\mathcal{I})(\ell)\leq \mathcal{T}_{A}^{k}(U)(\ell)\leq \alpha^{k}U(\ell)
\leq \delta \alpha^{k}\mathcal{I}(\ell)$.
This gives rise to $\mathcal{T}_{A}^{k}(\mathcal{I})(\ell)\leq \beta\alpha^{k}I_{n\times n}$,
where $\beta=\frac{\delta}{\xi}>1$.
Further, we have that $\|\mathcal{T}_{A}^{k}(\mathcal{I})(\ell)\|\leq \beta\alpha^{k}$,
and $(i)$ is then inferred according to Corollary \ref{coro742tai} and Theorem \ref{EMSSCiff}.
\end{proof}
\begin{theorem}\label{1365coIF1367EMSSC}
If $(\mathbf{A};\mathcal{G})$ is EMSS-C, then there exists
$U\in\mathbb{H}^{n+}_{\infty}$ such that
\begin{equation}\label{1367utaucc}
U(\ell)-\mathcal{T}_{A}(U)(\ell)=C(\ell)'C(\ell)\ \mu\text{-}a.e..
\end{equation}
\end{theorem}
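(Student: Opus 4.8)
The plan is to reduce the statement to the invertibility of $\mathcal{I}_{\mathbb{SH}_{\infty}^{n}}-\mathcal{T}_{A}$, exactly as in the implication $(i)\rightarrow(ii)$ of Theorem \ref{906theorem31234}, the only change being that the forcing term $C'C$ is merely positive semidefinite rather than uniformly positive definite. First I would note that, by Theorem \ref{spectralcriterion}, EMSS-C of $(\mathbf{A};\mathcal{G})$ gives $r_{\sigma}(\mathcal{T}_{A})<1$, so that $\mathcal{I}_{\mathbb{SH}_{\infty}^{n}}-\mathcal{T}_{A}$ is boundedly invertible on $\mathbb{SH}_{\infty}^{n}$ with $(\mathcal{I}_{\mathbb{SH}_{\infty}^{n}}-\mathcal{T}_{A})^{-1}=\sum_{k=0}^{\infty}\mathcal{T}_{A}^{k}$, the series converging in operator norm because the ESAE bound (Theorem \ref{EMSSCiff}) gives $\|\mathcal{T}_{A}^{k}\|\leq\beta\alpha^{k}$. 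Next I would check that $C'C:=\{C(\ell)'C(\ell)\}_{\ell\in\Theta}$ is a legitimate element of $\mathbb{H}_{\infty}^{n+}$: it is $\mathbb{S}^{n}$-valued and measurable as a composition of measurable maps, pointwise positive semidefinite, and $\|C(\ell)'C(\ell)\|\leq\|C(\ell)\|^{2}$, so $\|C'C\|_{\infty}\leq\|C\|_{\infty}^{2}<\infty$ by Assumption \ref{286ass}(i).

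Having set up these ingredients, I would define $U:=(\mathcal{I}_{\mathbb{SH}_{\infty}^{n}}-\mathcal{T}_{A})^{-1}(C'C)=\sum_{k=0}^{\infty}\mathcal{T}_{A}^{k}(C'C)$, which is a well-defined element of $\mathbb{SH}_{\infty}^{n}$ since the series is absolutely convergent in $(\mathbb{SH}_{\infty}^{n},\|\cdot\|_{\infty})$. Applying $\mathcal{I}_{\mathbb{SH}_{\infty}^{n}}-\mathcal{T}_{A}$ to $U$ (equivalently, telescoping the partial sums) yields $U-\mathcal{T}_{A}(U)=C'C$ as an identity in $\mathbb{SH}_{\infty}^{n}$, i.e. $U(\ell)-\mathcal{T}_{A}(U)(\ell)=C(\ell)'C(\ell)$ for $\mu$-almost all $\ell$, which is precisely \eqref{1367utaucc}. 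It then remains to show $U\in\mathbb{H}_{\infty}^{n+}$. By Remark \ref{SH1SHinf441}, $\mathcal{T}_{A}$ is a positive operator on $\mathbb{SH}_{\infty}^{n}$, hence every partial sum $\sum_{k=0}^{N}\mathcal{T}_{A}^{k}(C'C)$ lies in the cone $\mathbb{H}_{\infty}^{n+}$; since this cone is closed in $(\mathbb{SH}_{\infty}^{n},\|\cdot\|_{\infty})$ (the underlying spaces being Banach by \cite{Costa2014}) and the partial sums converge to $U$ in norm, we conclude $U\in\mathbb{H}_{\infty}^{n+}$.

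The argument above is almost entirely routine given the machinery already in place; the only delicate point — and the nearest thing to an obstacle — is the bookkeeping that guarantees $U$ is genuinely a measurable, essentially bounded, $\mathbb{S}^{n}$-valued function and that it stays in the \emph{closed} positive cone after passing to the limit. This is exactly what the Banach-space structure of $\mathbb{SH}_{\infty}^{n}$ together with the geometric decay $\|\mathcal{T}_{A}^{k}\|\leq\beta\alpha^{k}$ supplies, so no genuinely new estimate is needed. I would also remark that, in contrast with assertion $(ii)$ of Theorem \ref{906theorem31234}, one can only claim $U\in\mathbb{H}_{\infty}^{n+}$ and not $U\in\mathbb{H}_{\infty}^{n+*}$, precisely because $C'C$ need not be uniformly positive definite.
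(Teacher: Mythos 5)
Your proposal is correct and follows exactly the paper's route: the paper proves this theorem by invoking the same Neumann-series argument $U=(\mathcal{I}_{\mathbb{SH}_{\infty}^{n}}-\mathcal{T}_{A})^{-1}(C'C)=\sum_{k=0}^{\infty}\mathcal{T}_{A}^{k}(C'C)$ from the implication $(i)\rightarrow(ii)$ of Theorem \ref{906theorem31234}, noting—as you do—that the merely positive semidefinite forcing term $C(\ell)'C(\ell)$ only yields $U\in\mathbb{H}_{\infty}^{n+}$ rather than $\mathbb{H}_{\infty}^{n+*}$. Your added care about measurability, essential boundedness, and closedness of the positive cone is sound filling-in of details the paper leaves implicit.
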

\begin{proof}
Following the same reasoning as in Theorem \ref{906theorem31234} for the implication $(i)\rightarrow (ii)$, the desired result can be proved.
It is worth noting that the positive semidefinite term $C(\ell)'C(\ell)$ only guarantees that the solution of \eqref{1367utaucc} satisfies $U\in\mathbb{H}_{\infty}^{n+}$.
\end{proof}

\section{Bounded Real Lemma}\label{Bounded Real Lemma}
In this section, we aim to develop the $H_{\infty}$ theory for the perturbation attenuation analysis of $\Phi_{v}$,
where $v$ is an exogenous perturbation with finite energy.
Both the finite and infinite horizon cases are included.
Now we start by introducing the following sets:
For $T\in\mathbb{N}$,
$l^{2}(\overline{0,T};\mathbb{R}^{r})=\{v|v=\{v(k)\}_{k\in\overline{0,T}}$
is an $\mathbb{R}^{r}$-valued random variables sequence
with that $v(k)$ is $\mathfrak{F}_{k}$-measurable, $\forall k\in \overline{0,T}$,
and $\|v\|_{l^{2}(\overline{0,T};\mathbb{R}^{r})}:
=(\sum_{k=0}^{T}\mathbb{E}\{\|v(k)\|^{2}\})^{\frac{1}{2}}<\infty\}$.
For any $\ell\in\Theta$,
$l^{2}_{\ell}(\overline{0,T};\mathbb{R}^{r})
=\{v|v=\{v(k)\}_{k\in\overline{0,T}}$ is an $\mathbb{R}^{r}$-valued random variables sequence
with that $v(k)$ is $\mathfrak{F}_{k}$-measurable, $\forall k\in \overline{0,T}$, and
$\|v\|_{l^{2}_{\ell}(\overline{0,T};\mathbb{R}^{r})}=:(\sum_{k=0}^{T}$
$\mathbb{E}\{\|v(k)\|^{2}|\vartheta_{0}=\ell\})^{\frac{1}{2}}<\infty\}$;
$l^{2}_{\ell}(\mathbb{N};\mathbb{R}^{r})
=\{v|v=\{v(k)\}_{k\in\mathbb{N}}$
is an $\mathbb{R}^{r}$-valued random variables sequence with that $v(k)$ is $\mathfrak{F}_{k}$-measurable, $\forall k\in \mathbb{N}$,
and $\|v\|_{l^{2}_{\ell}(\mathbb{N};\mathbb{R}^{r})}$
$:=(\sum_{k=0}^{\infty}$
$\mathbb{E}\{\|v(k)\|^{2}|\vartheta_{0}=\ell\})^{\frac{1}{2}}<\infty\}$.

\begin{proposition}
(i)~$l^{2}(\overline{0,T};\mathbb{R}^{r})$ is a Hilbert space with the norm
$\|\cdot\|_{l^{2}(\overline{0,T};\mathbb{R}^{r})}$
induced by the usual inner product;\\
(ii)~For any $\ell\in\Theta$, $l^{2}_{\ell}(\overline{0,T};\mathbb{R}^{r})$ and $l^{2}_{\ell}(\mathbb{N};\mathbb{R}^{r})$
are Hilbert spaces with
$\|\cdot\|_{l^{2}_{\ell}(\overline{0,T};\mathbb{R}^{r})}$ and $\|\cdot\|_{l^{2}_{\ell}(\mathbb{N};\mathbb{R}^{r})}$, respectively.
\end{proposition}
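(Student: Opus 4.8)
The plan is to recognize each of the three spaces as an orthogonal direct sum of ordinary $L^{2}$-spaces over $(\Omega,\mathfrak{F})$ and then to invoke the Riesz--Fischer theorem together with the standard fact that a finite, or a countable $\ell^{2}$-, orthogonal direct sum of Hilbert spaces is again a Hilbert space. For a fixed $\ell\in\Theta$ I would regard $\vartheta(0)=\ell$ as a deterministic initial condition of the chain, so that $\mathcal{P}_{\ell}(\cdot):=\mathcal{P}(\cdot\,|\,\vartheta_{0}=\ell)$ is a genuine probability measure on $(\Omega,\mathfrak{F})$ and every conditional expectation $\mathbb{E}\{\cdot\,|\,\vartheta_{0}=\ell\}$ appearing in the statement is just the ordinary expectation with respect to $\mathcal{P}_{\ell}$.

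First I would pin down the inner products. On $l^{2}(\overline{0,T};\mathbb{R}^{r})$ set $\langle u,v\rangle:=\sum_{k=0}^{T}\mathbb{E}\{u(k)'v(k)\}$; on $l^{2}_{\ell}(\overline{0,T};\mathbb{R}^{r})$ use the same expression with $\mathbb{E}$ replaced by $\mathbb{E}\{\cdot\,|\,\vartheta_{0}=\ell\}$; and on $l^{2}_{\ell}(\mathbb{N};\mathbb{R}^{r})$ use the series $\langle u,v\rangle:=\sum_{k=0}^{\infty}\mathbb{E}\{u(k)'v(k)\,|\,\vartheta_{0}=\ell\}$. From $|u(k)'v(k)|\le\tfrac12(\|u(k)\|^{2}+\|v(k)\|^{2})$ each series converges absolutely whenever $u,v$ belong to the relevant space; bilinearity and symmetry are immediate, $\langle v,v\rangle=\sum_{k}\mathbb{E}\{\|v(k)\|^{2}\}\ge0$ equals the square of the stated norm, and $\langle v,v\rangle=0$ forces $v(k)=0$ almost surely (resp.\ $\mathcal{P}_{\ell}$-almost surely) for every $k$. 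After the usual identification of sequences that agree almost everywhere at each time $k$, these forms are positive definite, so all three spaces are real inner product spaces inducing precisely the norms in the statement.

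It then remains to prove completeness. Via the coordinate map $v\mapsto(v(0),v(1),\dots)$ one has the isometric identifications $l^{2}(\overline{0,T};\mathbb{R}^{r})\cong\bigoplus_{k=0}^{T}L^{2}(\Omega,\mathfrak{F}_{k},\mathcal{P};\mathbb{R}^{r})$, $l^{2}_{\ell}(\overline{0,T};\mathbb{R}^{r})\cong\bigoplus_{k=0}^{T}L^{2}(\Omega,\mathfrak{F}_{k},\mathcal{P}_{\ell};\mathbb{R}^{r})$ and $l^{2}_{\ell}(\mathbb{N};\mathbb{R}^{r})\cong\bigoplus_{k=0}^{\infty}L^{2}(\Omega,\mathfrak{F}_{k},\mathcal{P}_{\ell};\mathbb{R}^{r})$, the last being the $\ell^{2}$-direct sum. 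Each summand is complete by Riesz--Fischer (the finite dimensionality of the range is irrelevant), and a finite, resp.\ a countable $\ell^{2}$-, orthogonal direct sum of Hilbert spaces is a Hilbert space; this gives (i) and (ii). If one prefers a hands-on argument, take a Cauchy sequence $\{v^{(j)}\}$ in one of the spaces; since $\mathbb{E}\{\|v^{(j)}(k)-v^{(i)}(k)\|^{2}\}\le\|v^{(j)}-v^{(i)}\|^{2}$ for each $k$, the sequence $\{v^{(j)}(k)\}_{j}$ is Cauchy in the corresponding $L^{2}$-space, hence converges there to some $\mathfrak{F}_{k}$-measurable $v(k)$; for every truncation level $N$ one then has $\sum_{k=0}^{N}\mathbb{E}\{\|v(k)-v^{(j)}(k)\|^{2}\}=\lim_{i\to\infty}\sum_{k=0}^{N}\mathbb{E}\{\|v^{(i)}(k)-v^{(j)}(k)\|^{2}\}\le\limsup_{i\to\infty}\|v^{(i)}-v^{(j)}\|^{2}$, and letting $N\to\infty$ by monotone convergence and then $j\to\infty$ shows that $v$ lies in the space and $v^{(j)}\to v$.

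No step here is genuinely hard. The closest thing to an obstacle is the completeness of the infinite-horizon space $l^{2}_{\ell}(\mathbb{N};\mathbb{R}^{r})$, where one must interchange the sum over $k$ with the limit in the approximating sequence — the standard $\ell^{2}$-completeness maneuver, justified by monotone convergence (or Fatou). A secondary point needing a line of justification is the well-definedness of $\mathcal{P}_{\ell}$ for an arbitrary fixed $\ell\in\Theta$, which is settled by reading $\vartheta(0)=\ell$ as a deterministic initial value, as the model $\Phi_{v}$ permits. I do not anticipate any further difficulty.
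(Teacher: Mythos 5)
Your proof is correct, and it takes a slightly different route from the paper's. The paper introduces the larger space $\widetilde{l}^{2}(\overline{0,T};\mathbb{R}^{r})$ of \emph{all} measurable square-summable random sequences, invokes the argument of Costa--Fragoso--Marques (2005) for its completeness, and then obtains $l^{2}(\overline{0,T};\mathbb{R}^{r})$ as a closed subspace (the adaptedness constraint being what gets cut out), with the conditional spaces handled ``similarly.'' You instead identify each space outright, via the coordinate map, with a finite or countable $\ell^{2}$-orthogonal direct sum $\bigoplus_{k} L^{2}(\Omega,\mathfrak{F}_{k},\mathcal{P};\mathbb{R}^{r})$ (resp.\ with $\mathcal{P}$ replaced by $\mathcal{P}_{\ell}$), so the $\mathfrak{F}_{k}$-measurability is built into the summands from the start and Riesz--Fischer plus the standard direct-sum lemma finish the job; your back-up Cauchy argument with the monotone-convergence interchange of $\sum_{k}$ and $\lim_{i}$ is exactly the self-contained version of that lemma. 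The two approaches rest on the same underlying fact (completeness of $L^{2}$ over a sub-$\sigma$-field), but yours avoids proving closedness of the adapted subspace and avoids the external citation, while the paper's is shorter on the page; your explicit treatment of $\mathbb{E}\{\cdot\,|\,\vartheta_{0}=\ell\}$ as integration against the genuine probability measure $\mathcal{P}_{\ell}$ (reading $\vartheta(0)=\ell$ as a deterministic initial condition) fills in a point the paper passes over silently when it says the conditional cases follow ``similarly.''
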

\begin{proof}
Let $\widetilde{l}^{2}(\overline{0,T};\mathbb{R}^{r})$ be the space of all $r$-dimensional measurable and square-summable random sequences
$v(\cdot) : \overline{0,T} \times \Omega\rightarrow\mathbb{R}^{r}$.
Following a similar argument as in \cite{BookCosta2005},
it makes known that $\widetilde{l}^{2}(\overline{0,T};\mathbb{R}^{r})$ is a Hilbert space
with the norm $\|v\|_{2}=(\sum_{k=0}^{T}\mathbb{E}\{\|v(k)\|^{2}\})^{\frac{1}{2}}$.
Therefore, $l^{2}(\overline{0,T};\mathbb{R}^{r})$ is a Hilbert space with $\|\cdot\|_{l^{2}(\overline{0,T};\mathbb{R}^{r})}$
owing to being a closed subspace of $\widetilde{l}^{2}(\overline{0,T};\mathbb{R}^{r})$.
For any $\ell\in\Theta$, it can be shown that $\widetilde{l}^{2}_{\ell}(\overline{0,T};\mathbb{R}^{r})$
and $l^{2}_{\ell}(\mathbb{N};\mathbb{R}^{r})$ are also Hilbert spaces similarly.
\end{proof}

For convenience, $v\in l^{2}_{\Theta}(\overline{0,T};\mathbb{R}^{r})$ means that $v\in l^{2}_{\ell}(\overline{0,T};\mathbb{R}^{r})$
and $\|v\|_{l^{2}_{\ell}(\overline{0,T};\mathbb{R}^{r})}<\infty$ $\mu\text{-}a.e.$; $v\in l^{2}_{\Theta}(\mathbb{N};\mathbb{R}^{r})$ means that
$v\in l^{2}_{\ell}(\mathbb{N};\mathbb{R}^{r})$ and $\|v\|_{l^{2}_{\ell}(\mathbb{N};\mathbb{R}^{r})}<\infty$ $\mu\text{-}a.e.$.

In what follows, to avoid confusion, we represent the state and output response of $\Phi_{v}$
as the generalized functions of the initial conditions:
$x(\cdot)=\phi_{x}(\cdot,x_{0},\vartheta_{0},v)$ is the state response of $\Phi_{v}$
with the initial conditions $\vartheta(0)=\vartheta_{0}$ and $x(0)=x_{0}$;
$y(\cdot)=\phi_{y}(\cdot,x_{0},\vartheta_{0},v)=C(\vartheta(\cdot))\phi_{x}(\cdot,x_{0},\vartheta_{0},v)+D(\vartheta(\cdot))v(\cdot)$
is the corresponding output response.
Given $\gamma>0$, for $U\in\mathbb{H}_{\infty}^{n}$, define
\begin{eqnarray}
&&\ \ \ \ \ \ \Psi_{1}(U)(\ell)=\mathcal{T}_{A}(U)(\ell)+C(\ell)'C(\ell),\nonumber\\
&&\ \ \ \ \ \ \Psi_{2}(U)(\ell)=A(\ell)'\mathcal{E}(U)(\ell)B(\ell),
\nonumber\\
&&\ \ \ \ \ \ \Psi_{3}^{\gamma}(U)(\ell)=\mathcal{T}_{B}(U)(\ell)+D(\ell)'D(\ell)-\gamma^{2} I_{r\times r},\nonumber\\
&&\ \ \ \ \ \ \mathcal{F}(U)(\ell)=-\Psi_{3}^{\gamma}(U)(\ell)^{-1}
\Psi_{2}(U)(\ell)', \ \ell\in\Theta, \nonumber
\end{eqnarray}
where $\mathcal{T}_{B}$ is given by \eqref{TK} with $K=B$.

\subsection{The finite horizon case}\label{2028finhcbrl}
In this subsection, the finite horizon BRL, given in Theorem \ref{2192fiboudedlemma},
presents a sufficient and necessary condition to ensure that the finite horizon $H_{\infty}$ norm of the linear perturbation operator
is below a prescribed level $\gamma>0$.

Consider $\Phi_{v}$.
For $T\in\mathbb{N}$, if $v\in l^{2}(\overline{0,T};\mathbb{R}^{r})$, then
$\{\phi_{y}(k,0,\vartheta_{0},v)\}_{k\in\overline{0,T}}\in l^{2}(\overline{0,T};\mathbb{R}^{m})$.
Hence, we could define the linear perturbation operator
$\mathbb{L}_{T}:l^{2}(\overline{0,T};\mathbb{R}^{r})\rightarrow l^{2}(\overline{0,T};\mathbb{R}^{m})$
with $\mathbb{L}_{T}(v)(k)=\phi_{y}(k,0,\vartheta_{0},v),\ \forall k\in\overline{0,T},$
and its $H_{\infty}$ norm $\|\mathbb{L}_{T}\|$ is
\begin{equation}\label{1917tLTnorm}
\|\mathbb{L}_{T}\|:=\sup_{\substack{v\in l^{2}(\overline{0,T};\mathbb{R}^{r}),\\ v\neq0, \ x_{0}=0}}\frac{\|\mathbb{L}_{T}(v)\|_{l^{2}(\overline{0,T};\mathbb{R}^{m})}}
{\|v\|_{l^{2}(\overline{0,T};\mathbb{R}^{r})}}.
\end{equation}
$\|\mathbb{L}_{T}\|$ can measure the effect of the unknown disturbance $v$ on the output $\{\phi_{y}(k,0,\vartheta_{0},v)\}_{k\in\overline{0,T}}$
in the worst-case scenario,
and a larger value of $\|\mathbb{L}_{T}\|$ indicates a greater effect of $v$ on $\{\phi_{y}(k,0,\vartheta_{0},v)\}_{k\in\overline{0,T}}$.
Therefore, finding a way to make $\|\mathbb{L}_{T}\|$ smaller than a prescribed level is an important issue
in analyzing the disturbance attenuation properties of $\Phi_{v}$.

Given $\gamma>0$ and $T\in\mathbb{N}$, for any $x_{0}\in\mathbb{R}^{n}$, define the cost functionals
\begin{eqnarray}
&&J_{\gamma}(T,x_{0},v):=\sum_{k=0}^{T}\mathbb{E}
\{\|y(k)\|^{2}-\gamma^{2}\|v(k)\|^{2}\},\nonumber\\
&&\ \ \ \ \ \ \ \ \ \ \ \ \ \ \ \ \ \ \ \ \ \ \ \ \ \ \ \ \ \ \ \
\forall v\in l^{2}(\overline{0,T};\mathbb{R}^{r});\nonumber\\
&&J_{\gamma}(T,x_{0},\ell,v):=\sum_{k=0}^{T}\mathbb{E}
\{\|y(k)\|^{2}-\gamma^{2}\|v(k)\|^{2}|\vartheta_{0}=\ell\},\nonumber\\
&&\ \ \ \ \ \ \ \ \ \ \ \ \ \ \ \ \ \ \ \ \ \ \ \ \ \ \ \ \ \ \ \
\forall v\in l^{2}_{\ell}(\overline{0,T};\mathbb{R}^{r}),\ \forall \ell\in\Theta.\label{1811costf1}
\end{eqnarray}

The following propositions state some properties of $\|\mathbb{L}_{T}\|$ and the cost functionals.
The proofs are relatively simple and therefore omitted.
\begin{proposition}\label{1836proJTXOVINT}
Given $\gamma>0$ and $T\in\mathbb{N}$, for any $v\in l^{2}_{\Theta}(\overline{0,T};\mathbb{R}^{r})$, $J_{\gamma}(T,x_{0},v)=\int_{\Theta}J_{\gamma}(T,x_{0},\ell,v)\nu(\ell)\mu(d\ell)$.
\end{proposition}
\begin{proposition}\label{pro1821LJJJ}
Given $\gamma>0$ and $T\in\mathbb{N}$, for any $v\in l^{2}(\overline{0,T};\mathbb{R}^{r})$
with $v\neq 0$,
$J_{\gamma}(T,0,v)\leq0$ iff $\|\mathbb{L}_{T}\|\leq\gamma$.
Moreover,
$J_{\gamma}(T,0,v)<0$ iff $\|\mathbb{L}_{T}\|<\gamma$.
\end{proposition}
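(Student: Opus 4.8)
The plan is to reduce both equivalences to the definition of the $H_{\infty}$ norm \eqref{1917tLTnorm} via a single elementary identity. Fix $\gamma>0$ and $T\in\mathbb{N}$, and take any $v\in l^{2}(\overline{0,T};\mathbb{R}^{r})$. With zero initial state $x_{0}=0$, the output response of $\Phi_{v}$ is $y(k)=\phi_{y}(k,0,\vartheta_{0},v)=\mathbb{L}_{T}(v)(k)$ for all $k\in\overline{0,T}$; substituting this into the definition of $J_{\gamma}$ in \eqref{1811costf1}, splitting $\mathbb{E}$ over the finite sum, and recalling the norms of $l^{2}(\overline{0,T};\mathbb{R}^{m})$ and $l^{2}(\overline{0,T};\mathbb{R}^{r})$, one obtains
\begin{equation*}
J_{\gamma}(T,0,v)=\|\mathbb{L}_{T}(v)\|_{l^{2}(\overline{0,T};\mathbb{R}^{m})}^{2}-\gamma^{2}\,\|v\|_{l^{2}(\overline{0,T};\mathbb{R}^{r})}^{2}.
\end{equation*}
This identity is the engine of the whole proof; the only thing worth checking is that distributing $\mathbb{E}$ over the sum is legitimate, which holds because every summand is integrable and there are finitely many of them.

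For the first assertion, take $v\neq0$, so $\|v\|_{l^{2}(\overline{0,T};\mathbb{R}^{r})}>0$. The identity then gives $J_{\gamma}(T,0,v)\leq0$ iff $\|\mathbb{L}_{T}(v)\|_{l^{2}(\overline{0,T};\mathbb{R}^{m})}\leq\gamma\,\|v\|_{l^{2}(\overline{0,T};\mathbb{R}^{r})}$, i.e.\ iff the ratio $\|\mathbb{L}_{T}(v)\|_{l^{2}(\overline{0,T};\mathbb{R}^{m})}/\|v\|_{l^{2}(\overline{0,T};\mathbb{R}^{r})}$ does not exceed $\gamma$. Imposing this for every nonzero $v$ and taking the supremum, \eqref{1917tLTnorm} yields $\|\mathbb{L}_{T}\|\leq\gamma$; conversely, $\|\mathbb{L}_{T}\|\leq\gamma$ bounds each such ratio by $\gamma$, hence $J_{\gamma}(T,0,v)\leq0$ for every $v\neq0$. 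This settles ``$J_{\gamma}(T,0,v)\leq0$ iff $\|\mathbb{L}_{T}\|\leq\gamma$''.

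For the ``moreover'' part, $\|\mathbb{L}_{T}\|<\gamma$ at once gives $\|\mathbb{L}_{T}(v)\|_{l^{2}(\overline{0,T};\mathbb{R}^{m})}\leq\|\mathbb{L}_{T}\|\,\|v\|_{l^{2}(\overline{0,T};\mathbb{R}^{r})}<\gamma\,\|v\|_{l^{2}(\overline{0,T};\mathbb{R}^{r})}$ for $v\neq0$, whence $J_{\gamma}(T,0,v)<0$ by the identity. I expect the converse to be the one genuine obstacle: the reasoning of the previous paragraph produces only $\|\mathbb{L}_{T}\|\leq\gamma$ from $J_{\gamma}(T,0,v)<0$ for all $v\neq0$, and strictness cannot in general be recovered from this alone, because the supremum in \eqref{1917tLTnorm} need not be attained by any $v$ (already the feedthrough term $v(k)\mapsto D(\vartheta(k))v(k)$ inside $\mathbb{L}_{T}$ illustrates this). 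To close it one strengthens the hypothesis to a uniform margin, namely $J_{\gamma}(T,0,v)\leq-\varepsilon\,\|v\|_{l^{2}(\overline{0,T};\mathbb{R}^{r})}^{2}$ for some $\varepsilon>0$ and all $v$ — which is precisely what the finite-horizon Riccati-type characterization derived later in this section provides — since then $\|\mathbb{L}_{T}\|^{2}\leq\gamma^{2}-\varepsilon<\gamma^{2}$. Apart from this point, the proposition is a direct transcription of the definitions of $J_{\gamma}$ and of $\|\mathbb{L}_{T}\|$.
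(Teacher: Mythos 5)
Your identity $J_{\gamma}(T,0,v)=\|\mathbb{L}_{T}(v)\|^{2}_{l^{2}(\overline{0,T};\mathbb{R}^{m})}-\gamma^{2}\|v\|^{2}_{l^{2}(\overline{0,T};\mathbb{R}^{r})}$, the non-strict equivalence, and the implication $\|\mathbb{L}_{T}\|<\gamma\Rightarrow J_{\gamma}(T,0,v)<0$ for all $v\neq0$ are correct and are surely the ``relatively simple'' argument the authors had in mind (the paper omits the proof, so there is nothing more detailed to compare against). Your diagnosis of the remaining implication is also sound as a mathematical point: since $l^{2}(\overline{0,T};\mathbb{R}^{r})$ is infinite-dimensional and $\mathbb{L}_{T}$ is built from multiplication-type operators, the supremum in \eqref{1917tLTnorm} need not be attained, and pointwise strict negativity of $J_{\gamma}(T,0,\cdot)$ does not force $\|\mathbb{L}_{T}\|<\gamma$. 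A concrete instance: take $T=0$, $C=0$, $x_{0}=0$ and $\|D(\ell)\|<\gamma$ for $\mu$-almost all $\ell$ with $\esssup_{\ell}\|D(\ell)\|=\gamma$; then $J_{\gamma}(0,0,v)=\mathbb{E}\{\|D(\vartheta_{0})v(0)\|^{2}-\gamma^{2}\|v(0)\|^{2}\}<0$ for every $v\neq0$, yet $\|\mathbb{L}_{0}\|=\gamma$. So the ``moreover'' part of Proposition \ref{pro1821LJJJ}, read with the universal quantifier over $v$, is only valid in the strengthened (uniform-margin) form you propose; the defect is in the statement's phrasing rather than in your argument.

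What your proposal stops short of, and what you should add to make the repair usable where the paper actually invokes the strict direction (the sufficiency part of Theorem \ref{2192fiboudedlemma}), is that the uniform margin is indeed available there: one has $J_{\gamma}(T,0,v)\leq-\eta_{0}\|v-\hat{v}\|^{2}_{l^{2}(\overline{0,T};\mathbb{R}^{r})}$ with $\hat{v}(k)=\mathcal{F}(Y_{T}^{\gamma}(k+1))(\vartheta(k))\phi_{x}(k,0,\vartheta_{0},v)$, and the linear map $v\mapsto w:=v-\hat{v}$ is boundedly invertible on the finite horizon (given $w$, propagate the closed-loop recursion $x(k+1)=[A+B\mathcal{F}(Y_{T}^{\gamma}(k+1))](\vartheta(k))x(k)+B(\vartheta(k))w(k)$ from $x(0)=0$ and recover $v=w+\mathcal{F}(Y_{T}^{\gamma}(\cdot+1))x$; all coefficients lie in $\mathbb{H}_{\infty}$ and the horizon is finite). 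Hence $\|v-\hat{v}\|\geq c\|v\|$ for some $c>0$, which yields $J_{\gamma}(T,0,v)\leq-\eta_{0}c^{2}\|v\|^{2}$ and therefore $\|\mathbb{L}_{T}\|^{2}\leq\gamma^{2}-\eta_{0}c^{2}<\gamma^{2}$, exactly the strengthened hypothesis you identified as the correct substitute. With this observation appended, your treatment covers everything the paper actually needs from Proposition \ref{pro1821LJJJ} (and from its use after Lemma \ref{1861nepro}), while correctly flagging that the literal pointwise-strict version of the converse is unprovable.
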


Given $\gamma>0$ and $T\in\mathbb{N}$, for every $k\in\overline{0,T}$, set $F(k)=\{F(k,\ell)\}_{\ell\in\Theta}\in \mathbb{H}_{\infty}^{r\times n}$.
Consider the backward difference equation:
\begin{equation}\label{1863YA+BF}
\left\{
\begin{array}{lr}
Y (k,\ell )=[A (\ell )+B (\ell )F (k,\ell )]'\mathcal{E} (Y (k+1 ) ) (\ell )\\
\ \ \ \ \ \ \cdot[A (\ell )+B (\ell )F (k,\ell )]-\gamma^{2}F (k,\ell )'F (k,\ell )\\
\ \ \ \ \ \ \!+\![C(\ell)\!+\!D(\ell)F(k,\ell)]'[C(\ell )\!+\!D(\ell )F(k,\ell)], \\
Y (T+1,\ell )=0, \ \forall k\in\overline{0,T},\ \ell\in\Theta.
\end{array}
\right.
\end{equation}
Obviously, the solution of \eqref{1863YA+BF} can be obtained by iteration and is denoted by
$Y_{T}^{F,\gamma}(k)=\{Y_{T}^{F,\gamma}(k,\ell)\}_{\ell\in\Theta}\in\mathbb{H}_{\infty}^{n},\ k\in\overline{0,T+1}.$
In addition, we can rewrite \eqref{1863YA+BF} as
\begin{equation}\label{1882eq}
\left(
                       \begin{array}{c}
                         I_{n\times n} \\
                         F\left(k,\ell\right)\\
                       \end{array}
                     \right)'
W\left(Y_{T}^{F,\gamma},k,\ell\right)
\left(
  \begin{array}{c}
    I_{n\times n} \\
    F\left(k,\ell\right) \\
  \end{array}
\right)=0,
\end{equation}
where\\
$W\!\left(\!Y,k,\ell\right)
\!=\!\left(\!
  \begin{array}{cc}
    \!\Psi_{\!1\!}(Y\!(k\!+\!1))(\ell)\!-\!Y\!(k)(\ell)\! & \!\Psi_{\!2\!}(Y\!(k\!+\!1))(\ell)\! \\
    \!\Psi_{\!2\!}(Y\!(k\!+\!1))(\ell)' & \!\Psi_{\!3\!}^{\!\gamma\!}(Y
    (k\!+\!1))(\ell)\! \\
  \end{array}
\right).$

The following lemma will be used to assist in proving the necessity part of the finite horizon BRL (Theorem \ref{2192fiboudedlemma}).
\begin{lemma}\label{1861nepro}
Fix $\gamma>0$ and $T\in\mathbb{N}$.
Set $F(k)\in \mathbb{H}_{\infty}^{r\times n}$, $k\in\overline{0,T}$.
If $\Phi_{v}$ satisfies $\|\mathbb{L}_{T}\|<\gamma$, then \eqref{1863YA+BF} admits a solution such that
\begin{equation}\label{1863props3}
\Psi_{3}^{\gamma}(Y_{T}^{F,\gamma}(k\!+\!1))(\ell)\leq-\eta_{0} I_{r\times r}\ \mu\text{-}a.e., \ \forall\eta_{0}\!\in\!(0,\gamma^{2}\!-\!\|\mathbb{L}_{T}\|^{2})
\end{equation}
holds for every $k\in\overline{0,T}$.
\end{lemma}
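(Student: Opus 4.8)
The plan is to establish \eqref{1863props3} by a completion-of-squares argument that rewrites the cost functional $J_\gamma(T,0,\ell,v)$ in terms of the solution $Y_T^{F,\gamma}$ of \eqref{1863YA+BF} and then chooses a clever perturbation $v$ that exposes the indefiniteness of $\Psi_3^\gamma$. First I would fix $\ell\in\Theta$ and $k_0\in\overline{0,T}$, and observe that the iteration \eqref{1863YA+BF} defines $Y_T^{F,\gamma}(k)$ for all $k\in\overline{0,T+1}$. Using the identity \eqref{1882eq} (equivalently, expanding $\mathbb{E}\{x(k+1)'Y_T^{F,\gamma}(k+1)(\vartheta(k+1))x(k+1)|\vartheta_0=\ell\}$ via Lemma \ref{le620diedai}-type conditioning on $\mathfrak{F}_k$ and telescoping from $k=0$ to $T$), one derives a representation
\begin{equation*}
J_\gamma(T,0,\ell,v)=\sum_{k=0}^{T}\mathbb{E}\Big\{\big(v(k)-F(k,\vartheta(k))x(k)\big)'\Psi_3^{\gamma}(Y_T^{F,\gamma}(k+1))(\vartheta(k))\big(v(k)-F(k,\vartheta(k))x(k)\big)\,\big|\,\vartheta_0=\ell\Big\}
\end{equation*}
for $x_0=0$, where $x(\cdot)=\phi_x(\cdot,0,\vartheta_0,v)$. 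This is the workhorse identity; the matrices $\Psi_2,\Psi_3^\gamma$ and the $W(\cdot)$ block structure were set up precisely so that the cross terms cancel against $F$.

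Next I would argue by contradiction: suppose \eqref{1863props3} fails, i.e., for some $\eta_0\in(0,\gamma^2-\|\mathbb{L}_T\|^2)$ and some $k_0\in\overline{0,T}$ the inequality $\Psi_3^{\gamma}(Y_T^{F,\gamma}(k_0+1))(\ell)\leq-\eta_0 I_{r\times r}$ is violated on a set of positive $\mu$-measure; concretely there is a Borel set $\Delta$ with $\mu(\Delta)>0$ and a unit vector field $z(\ell)\in\mathbb{R}^r$ such that $z(\ell)'\Psi_3^{\gamma}(Y_T^{F,\gamma}(k_0+1))(\ell)z(\ell)>-\eta_0$ for $\ell\in\Delta$ (measurable selection of $z$ is available since $\Theta$ is Borel and the relevant map is measurable). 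I would then build a test input supported at a single time instant: set $v(k)=0$ for $k\neq k_0$, and at $k=k_0$ put $v(k_0)=\lambda\,z(\vartheta(k_0))\,\mathbf{1}_{\{\vartheta(k_0)\in\Delta\}}$ for a scalar $\lambda>0$. Because $v$ is zero before time $k_0$ and $x_0=0$, the state satisfies $x(k)=0$ for all $k\le k_0$, so $F(k,\vartheta(k))x(k)=0$ there; feeding this $v$ into the identity above, only the $k=k_0$ term survives and
\begin{equation*}
J_\gamma(T,0,\ell,v)=\lambda^2\,\mathbb{E}\big\{z(\vartheta(k_0))'\Psi_3^{\gamma}(Y_T^{F,\gamma}(k_0+1))(\vartheta(k_0))z(\vartheta(k_0))\,\mathbf{1}_{\{\vartheta(k_0)\in\Delta\}}\,\big|\,\vartheta_0=\ell\big\}.
\end{equation*}
One still needs this $v$ to be a genuine admissible disturbance in $l^2_\ell(\overline{0,T};\mathbb{R}^r)$ with $\|v\|\neq 0$ for $\mu$-almost all $\ell$; here Assumption \ref{1977assgtl0} (positivity of $\int_\Theta g(\ell|s)\mu(ds)$, hence the chain reaches $\Delta$ with positive probability from $\mu$-a.e.\ start, possibly combined with Assumption \ref{Ass2902}) is what guarantees $\mathbb{P}(\vartheta(k_0)\in\Delta\mid\vartheta_0=\ell)>0$ for $\mu$-a.e.\ $\ell$, so $v\not\equiv 0$.

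Finally I would integrate against $\nu$ using Proposition \ref{1836proJTXOVINT} to pass to the unconditional functional $J_\gamma(T,0,v)$, and compare with $\|\mathbb{L}_T\|$ via Proposition \ref{pro1821LJJJ}. Since $\|\mathbb{L}_T\|<\gamma$, Proposition \ref{pro1821LJJJ} gives the sharp bound $J_\gamma(T,0,v)\le(\|\mathbb{L}_T\|^2-\gamma^2)\|v\|^2_{l^2(\overline{0,T};\mathbb{R}^r)}$; but by construction of $v$ the single-instant quadratic form evaluates (after dividing by $\lambda^2$ and the mass of $\{\vartheta(k_0)\in\Delta\}$) to something $>-\eta_0\ge -(\gamma^2-\|\mathbb{L}_T\|^2)$ on a positive-measure set, contradicting that upper bound. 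Therefore \eqref{1863props3} must hold. The main obstacle I anticipate is the measure-theoretic bookkeeping: justifying the measurable selection of the ``bad direction'' field $z(\cdot)$, verifying the test input lies in $l^2_\ell(\overline{0,T};\mathbb{R}^r)$ with nonzero norm for $\mu$-a.e.\ $\ell$ (this is exactly where Assumptions \ref{Ass2902}–\ref{1977assgtl0} enter), and confirming that the completion-of-squares telescoping identity goes through with the conditional expectations handled by Fubini's theorem as in the proof of Lemma \ref{le620diedai}. The quadratic algebra itself, given \eqref{1882eq} and the definitions of $\Psi_1,\Psi_2,\Psi_3^\gamma$, is routine.
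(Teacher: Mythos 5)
There is a genuine gap, and it sits at the heart of your argument: the cross-term-free ``completion of squares'' identity you take as the workhorse is false for an arbitrary gain $F$. Telescoping $\mathbb{E}\{x(k+1)'Y_{T}^{F,\gamma}(k+1)(\vartheta(k+1))x(k+1)\}$ with $Y_{T}^{F,\gamma}$ solving \eqref{1863YA+BF} and using \eqref{1882eq} gives, for $x_{0}=0$,
\begin{equation*}
J_{\gamma}(T,0,\ell,v)=\sum_{k=0}^{T}\mathbb{E}\Bigl\{\bigl(v-Fx\bigr)'\Psi_{3}^{\gamma}(Y_{T}^{F,\gamma}(k+1))\bigl(v-Fx\bigr)
+2x'\bigl[\Psi_{2}(Y_{T}^{F,\gamma}(k+1))+F'\Psi_{3}^{\gamma}(Y_{T}^{F,\gamma}(k+1))\bigr]\bigl(v-Fx\bigr)\,\big|\,\vartheta_{0}=\ell\Bigr\},
\end{equation*}
i.e.\ the cross term survives; it cancels only when $F=\mathcal{F}(Y)$ and $Y$ solves the Riccati equation \eqref{1785ARE}, which is not the situation of this lemma (here $F$ is an arbitrary fixed element of $\mathbb{H}_{\infty}^{r\times n}$ and \eqref{1863YA+BF} is a Lyapunov-type, not Riccati, recursion). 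Compounding this, your test input is the \emph{open-loop} impulse $v(k)=\lambda z(\vartheta(k_{0}))\chi_{\{\vartheta(k_{0})\in\Delta\}}\delta_{k,k_{0}}$: after time $k_{0}$ the state is nonzero, so $v-Fx=-Fx\neq0$ for $k>k_{0}$ and the sum does \emph{not} collapse to the $k=k_{0}$ term, under either your identity or the correct one. Conceptually, the term $\mathcal{T}_{B}(Y_{T}^{F,\gamma}(k_{0}+1))$ inside $\Psi_{3}^{\gamma}$ is the cost-to-go of the \emph{closed-loop} trajectory (with the feedback $F$ applied after the impulse); an open-loop impulse produces a tail cost that has nothing to do with $Y_{T}^{F,\gamma}$, so your single-instant computation cannot isolate the quadratic form you need.

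The missing idea is precisely the paper's choice of test input $\hat v(k)=F(k,\vartheta(k))x(k)+\bar v(k)$ with $\bar v$ the impulse at $k_{0}$: then $\hat v-Fx=\bar v$ vanishes off $k_{0}$, killing both the quadratic and the cross terms for $k\neq k_{0}$, while at $k=k_{0}$ the cross term dies because $x(k_{0})=0$; this yields $J_{\gamma}(T,0,\ell,\hat v)=\mathbb{E}\{\bar v(k_{0})'\Psi_{3}^{\gamma}(Y_{T}^{F,\gamma}(k_{0}+1))(\vartheta(k_{0}))\bar v(k_{0})|\vartheta_{0}=\ell\}$ as in \eqref{2193J2+VV}, and the contradiction then proceeds exactly as you outline (Propositions \ref{1836proJTXOVINT} and \ref{pro1821LJJJ}, Assumptions \ref{Ass2902}--\ref{1977assgtl0} to guarantee $\mathcal{P}(\vartheta(k_{0})\in\Delta)>0$, measurable selection of the bad direction). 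Those surrounding ingredients in your proposal are sound, and aiming directly at the strict bound \eqref{1863props3} instead of the paper's two-step route (first $\Psi_{3}^{\gamma}\leq0$, then the argument of Proposition 8.4 in \cite{BookDragan2010}) would be acceptable; but without the feedback-plus-impulse construction the core step fails.
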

\begin{proof}
The proof of this lemma will be divided into two steps.
First, we will prove that
\begin{equation}\label{1892step1}
\Psi_{3}^{\gamma}(Y_{T}^{F,\gamma}(k+1))\left(\ell\right)\leq0\ \mu\text{-}a.e.,
\forall k\in\overline{0,T}.
\end{equation}
The second step, following a similar argument as the proof of Proposition 8.4 in \cite{BookDragan2010},
can use \eqref{1892step1} to show \eqref{1863props3}.
Therefore, here we just have to prove \eqref{1892step1}.
Since $\Phi_{v}$ satisfies $\|\mathbb{L}_{T}\|<\gamma$, by Proposition \ref{pro1821LJJJ},
for any $v\in l^{2}(\overline{0,T};\mathbb{R}^{r})$ with $v\neq 0$, we have that $J_{\gamma}(T,0,v)<0$.
Now let us assume that \eqref{1892step1} does not hold,
then there is $k_{0}\in\overline{0,T}$ and $\Delta\in\mathcal{B}(\Theta)$ with $\mu(\Delta)>0$
such that for any $\ell_{\Delta}\in\Delta$, there exists $\kappa(\ell_{\Delta})\in \mathbb{R}^{r}$
satisfying $\|\kappa(\ell_{\Delta})\|=1$ and that
\begin{equation}\label{1899fou}
\kappa(\ell_{\Delta})'
\Psi_{3}^{\gamma}(Y_{T}^{F,\gamma}(k_{0}+1))
\left(\ell_{\Delta}\right)
\kappa(\ell_{\Delta})>0
\end{equation}
holds $\mu$-$a.e.$ on $\Delta$.
For every $k\in\overline{0,T}$,
define $\bar{v}(k)\in l^{2}(\overline{0,T};\mathbb{R}^{r})$ by
$\bar{v}(k)\!=\!\left\{
\begin{aligned}
\kappa(\vartheta(k)) \cdot \chi_{\{\vartheta(k)\in \Delta\}},& \ k\!=\!k_{0},  \\
0,\ \ \ \ \ & \ k\!\neq\! k_{0},
\end{aligned} \right.$
in which $\chi_{\{\cdot\}}$ is an indicator function.
Set $\hat{v}(k)=F(k,\vartheta(k))z(k)+\bar{v}(k)$, $k\in\overline{0,T}$, where $z(k)=\phi_{x}(k,0,\vartheta_{0},\hat{v})$.
Clearly, $\hat{v}\in l^{2}(\overline{0,T};\mathbb{R}^{r})$ and $\hat{v}\neq 0$.
Noticing that $Y_{T}^{F,\gamma}(k),\ k\in\overline{0,T+1}$ is the solution of \eqref{1863YA+BF},
we calculate $\mathbb{E}\{[z(k+1)'Y^{F,\gamma}_{T}(k+1)(\vartheta(k+1))z(k+1) -z(k)'Y^{F,\gamma}_{T}(k)(\vartheta(k))z(k)]|\vartheta_{0}=\ell\}$
and sum it over $k$, then
\begin{equation}\label{2010sumo0t}
\begin{split}
0=&\sum_{k=0}^{T}\mathbb{E}\{[\left(
                       \begin{array}{c}
                         z(k) \\
                         \hat{v}(k) \\
                       \end{array}
                     \right)'
W\left(Y^{F,\gamma}_{T},k,\vartheta(k)\right)
              \left(
                       \begin{array}{c}
                         z(k) \\
                         \hat{v}(k) \\
                       \end{array}
                    \right)\\
&\ \ \ \ \ \ \ \ \ \ \!-\!\hat{v}(k)'(D(\vartheta(k))'\!D(\vartheta(k))
\!-\!\gamma^{2}\! I_{r\times r})\hat{v}(k)\\
&\ \ \ \ \ \ \ \ \ \ \!-z(k)'C(\vartheta(k))'C(\vartheta(k))z(k)]|\vartheta_{0}\!=\!\ell\}.
\end{split}
\end{equation}
Considering \eqref{1882eq} and \eqref{2010sumo0t}, one can rewrite \eqref{1811costf1} for $\Phi_{\hat{v}}$ with $x_{0}=0$ as follows:
\begin{equation}\label{2193J2+VV}
\begin{split}
&J_{\gamma}(T,0,\ell,\hat{v})\\
=&\sum_{k=0}^{T}\mathbb{E}\{\bar{v}(k)'\Psi_{3}^{\gamma}(Y_{T}^{F,\gamma}(k+1))
(\vartheta(k))\bar{v}(k)\\
&\ \ \ \ \ \  +2z(k)'[ \Psi_{3}^{\gamma}(Y_{T}^{F,\gamma}(k+1))(\vartheta(k))F(k,\vartheta(k))\\
&\ \ \ \ \ \  +\Psi_{2}(Y_{T}^{F,\gamma}(k+1))(\vartheta(k))]\bar{v}(k)|\vartheta_{0}=\ell\}.
\end{split}
\end{equation}
Since $\bar{v}(k)=0$ for $k\neq k_{0}$ and $z(k)$ is the state response of $\Phi_{\hat{v}}$ with $x_{0}=0$,
$z(k)=0$ for $k\leq k_{0}$ is concluded.
So, \eqref{2193J2+VV} can be rewritten as
$J_{\gamma}(T,0,\ell,\hat{v})\!=\!\mathbb{E}\{\bar{v}(k_{0})'\Psi_{3}^{\gamma}(Y_{T}^{F,\gamma}(k_{0}+1))
(\vartheta(k_{0}))\bar{v}(k_{0})|\vartheta_{0}=\ell\}.$
In view of this, by Proposition \ref{1836proJTXOVINT}, we have that
$J_{\gamma}(T,0,\hat{v})=\int_{\!\Theta\!}$
$\mathbb{E}\{\kappa(\vartheta(k_{0}))'
\Psi_{3}^{\gamma}(Y_{T}^{F,\gamma}(k_{0}\!+\!1))
(\vartheta(k_{0}))
\kappa(\vartheta(k_{0}))$
$\cdot\chi_{\{\vartheta(k_{0})\in \Delta\}}$
$|\vartheta_{0}\!=\!\ell\}\nu(\ell)\mu(d\ell).$
Further, $J_{\gamma}(T,0,\hat{v})> 0$
follows from \eqref{1899fou} and Assumptions \ref{Ass2902} and \ref{1977assgtl0},
which is a contradiction with that for any $v\in l^{2}(\overline{0,T};\mathbb{R}^{r})$ and $v\neq 0$, $J_{\gamma}(T,0,v)<0$.
\eqref{1892step1} is therefore proved.
\end{proof}
\begin{remark}
When the Markov chain takes values in a countable set, to prove \eqref{1892step1},
it only needs to assume the existence of a discrete point $l_{0}\in\Theta$ satisfying \eqref{1899fou}
to lead to a contradiction (refer to the proof of Proposition 8.4 in \cite{BookDragan2010}).
However, as mentioned earlier, the Borel space considered in this paper is a general state space,
including the case of continuous state space, so the above method is no longer applicable,
and it requires to utilize measure theory to derive Lemma \ref{1861nepro}.
\end{remark}

The result below is immediate from Lemma \ref{1861nepro}.
\begin{proposition}
Given $\gamma>0$, if there exists $T\in\mathbb{N}^{+}$ such that $\Phi_{v}$ satisfies $\|\mathbb{L}_{T}\|<\gamma$,
then $D(\ell)'D(\ell)-\gamma^{2} I_{r\times r}<0$ $\mu\text{-}a.e.$.
\end{proposition}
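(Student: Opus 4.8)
The plan is to specialize the conclusion of Lemma~\ref{1861nepro} to the terminal index $k=T$. First I would set up the hypothesis: since $\Phi_{v}$ satisfies $\|\mathbb{L}_{T}\|<\gamma$ for the given $T\in\mathbb{N}^{+}$, the interval $(0,\gamma^{2}-\|\mathbb{L}_{T}\|^{2})$ is nonempty, so fix any $\eta_{0}$ in it. Choosing arbitrary gains $F(k)\in\mathbb{H}_{\infty}^{r\times n}$, $k\in\overline{0,T}$ (say $F(k)\equiv 0$), Lemma~\ref{1861nepro} then guarantees that \eqref{1863YA+BF} admits a solution $Y_{T}^{F,\gamma}$ with $\Psi_{3}^{\gamma}(Y_{T}^{F,\gamma}(k+1))(\ell)\leq-\eta_{0}I_{r\times r}$ $\mu\text{-}a.e.$ for every $k\in\overline{0,T}$.

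Next I would read off the $k=T$ instance. By the terminal condition in \eqref{1863YA+BF} we have $Y_{T}^{F,\gamma}(T+1,\ell)=0$ for all $\ell\in\Theta$, and since $\mathcal{T}_{B}$ is linear we get $\mathcal{T}_{B}(Y_{T}^{F,\gamma}(T+1))(\ell)=0$; hence, by the definition of $\Psi_{3}^{\gamma}$,
\[
\Psi_{3}^{\gamma}(Y_{T}^{F,\gamma}(T+1))(\ell)=D(\ell)'D(\ell)-\gamma^{2}I_{r\times r}.
\]
Substituting this into the inequality of Lemma~\ref{1861nepro} at $k=T$ yields $D(\ell)'D(\ell)-\gamma^{2}I_{r\times r}\leq-\eta_{0}I_{r\times r}<0$ $\mu\text{-}a.e.$, which is exactly the assertion.

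I do not anticipate any genuine obstacle here: the statement is an immediate corollary of Lemma~\ref{1861nepro}, and the only thing to notice is that evaluating at the last step makes the $\mathcal{T}_{B}$ contribution vanish, leaving precisely $D'D-\gamma^{2}I_{r\times r}$; in particular the hypothesis $T\geq 1$ is not even needed. Should one prefer a self-contained argument that bypasses the uniform gap produced by Lemma~\ref{1861nepro}, an alternative is to test the cost functional with a disturbance concentrated at the single instant $k=T$, e.g. $v(k)=\kappa(\vartheta(T))\,\chi_{\{\cdot\}}$ for $k=T$ and $v(k)=0$ otherwise with $x_{0}=0$, so that the state stays at zero up to time $T$ and $J_{\gamma}(T,0,\ell,v)=\mathbb{E}\{\kappa(\vartheta(T))'(D(\vartheta(T))'D(\vartheta(T))-\gamma^{2}I_{r\times r})\kappa(\vartheta(T))\mid\vartheta_{0}=\ell\}$; combining with Propositions~\ref{1836proJTXOVINT} and~\ref{pro1821LJJJ} and Assumptions~\ref{Ass2902}--\ref{1977assgtl0} then forces $\mu\text{-}a.e.$ negative definiteness of $D(\ell)'D(\ell)-\gamma^{2}I_{r\times r}$. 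The one-line deduction from Lemma~\ref{1861nepro} is the cleaner route and is what I would present.
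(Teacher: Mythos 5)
Your proof is correct and matches the paper's route: the paper simply declares the proposition ``immediate from Lemma~\ref{1861nepro},'' and your one-line deduction—apply the lemma with any fixed gains, evaluate at $k=T$ where the terminal condition $Y_{T}^{F,\gamma}(T+1,\ell)=0$ makes the $\mathcal{T}_{B}$ term vanish so that $\Psi_{3}^{\gamma}$ reduces to $D(\ell)'D(\ell)-\gamma^{2}I_{r\times r}\leq-\eta_{0}I_{r\times r}$—is exactly that immediate argument spelled out.
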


Now we are prepared to propose the finite horizon BRL, which is the main result of this subsection.
\begin{theorem}\label{2192fiboudedlemma}
Given $\gamma>0$ and $T\in\mathbb{N}$, $\Phi_{v}$ satisfies $\|\mathbb{L}_{T}\|<\gamma$ iff the following backward difference equation:
\begin{equation}\label{1785ARE}
\left\{
\begin{array}{ll}
Y(k,\ell)\!=&\!\Psi_{1}(Y(k\!+\!1))(\ell)-\Psi_{2}(Y(k\!+\!1))(\ell)\\
&\cdot\Psi_{3}^{\gamma}(Y(k\!+\!1))(\ell)^{-1}\Psi_{2}(Y(k\!+\!1))(\ell)',\\
Y(T+1,&\ell)=0,\ \forall k\in\overline{0,T},\ \ell\in\Theta\\
\end{array}
\right.
\end{equation}
admits a solution $Y_{T}^{\gamma}(k)\in\mathbb{H}_{\infty}^{n+}$ such that
\begin{equation}\label{1794ineqxt}
\Psi_{3}^{\gamma}\!\left(Y_{T}^{\gamma}\!
\left(k\!+\!1\right)\right)\left(\ell\right)\!\leq\!-\!\eta_{0} I_{r\times r}\ \mu\text{-}a.e., \forall\eta_{0}\!\in\!\left(0,\!\gamma^{2}\!-\!\|\mathbb{L}_{T}\|^{2}\right),
\end{equation}
for every $k\in\overline{0,T}$.
\end{theorem}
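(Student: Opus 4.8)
The plan is to pass to the cost functional: by Proposition~\ref{pro1821LJJJ}, $\|\mathbb{L}_T\|<\gamma$ is equivalent to $J_\gamma(T,0,v)<0$ for every nonzero $v\in l^2(\overline{0,T};\mathbb{R}^r)$ (with $x_0=0$), so I would instead prove that this sign condition is equivalent to solvability of \eqref{1785ARE} together with \eqref{1794ineqxt}. Both directions run off one completion-of-squares (dissipation) identity. For any family $\{Y(k)\}_{k\in\overline{0,T+1}}\subset\mathbb{H}_\infty^n$ with $Y(T+1)=0$, set $x(k)=\phi_x(k,0,\vartheta_0,v)$ and $\zeta(k)=(x(k)',v(k)')'$. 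Using $\mathbb{E}\{Y(k{+}1)(\vartheta(k{+}1))\mid\mathfrak{F}_k\}=\mathcal{E}(Y(k{+}1))(\vartheta(k))$ (argued as in the proof of Lemma~\ref{le620diedai}) and the vanishing of the $C'D$ cross-term guaranteed by Assumption~\ref{286ass}(iii), a direct computation shows that $\|y(k)\|^2-\gamma^2\|v(k)\|^2$ plus the conditional one-step increment $\mathbb{E}\{x(k{+}1)'Y(k{+}1)(\vartheta(k{+}1))x(k{+}1)\mid\mathfrak{F}_k\}-x(k)'Y(k)(\vartheta(k))x(k)$ equals $\zeta(k)'W(Y,k,\vartheta(k))\zeta(k)$; summing over $k\in\overline{0,T}$, taking $\mathbb{E}$ and telescoping (with $x(0)=0$, $Y(T+1)=0$) gives
\[
J_\gamma(T,0,v)=\sum_{k=0}^{T}\mathbb{E}\{\zeta(k)'W(Y,k,\vartheta(k))\zeta(k)\},
\]
with $W$ as in \eqref{1882eq} (all expectations are finite because $Y(k)\in\mathbb{H}_\infty^n$ and, on a finite horizon, $x(k)$ is square-integrable).

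For necessity, assume \eqref{1785ARE}--\eqref{1794ineqxt} and take $Y=Y_T^\gamma$. Because $Y_T^\gamma$ solves the Riccati recursion, the $(1,1)$ block of $W(Y_T^\gamma,k,\ell)$ collapses to $\Psi_2(Y_T^\gamma(k{+}1))(\ell)\,\Psi_3^\gamma(Y_T^\gamma(k{+}1))(\ell)^{-1}\Psi_2(Y_T^\gamma(k{+}1))(\ell)'$, so a Schur-complement factorization turns $\zeta(k)'W(Y_T^\gamma,k,\vartheta(k))\zeta(k)$ into $w(k)'\Psi_3^\gamma(Y_T^\gamma(k{+}1))(\vartheta(k))w(k)$ with $w(k):=v(k)-\mathcal{F}(Y_T^\gamma(k{+}1))(\vartheta(k))x(k)$. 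Since the laws of the $\vartheta(k)$ are absolutely continuous with respect to $\mu$ (by Assumption~\ref{286ass}(iv) and the kernel density $g$), the bound \eqref{1794ineqxt} holds at $\vartheta(k)$ almost surely, whence $J_\gamma(T,0,v)\leq-\eta_0\sum_{k=0}^{T}\mathbb{E}\{\|w(k)\|^2\}$. If $v\neq0$ then $w\not\equiv0$: since $x(0)=0$ we have $w(0)=v(0)$, so $v(0)=0$ would force $x(1)=0$, hence $w(1)=v(1)$, and inductively $v\equiv0$, a contradiction. Thus $J_\gamma(T,0,v)<0$, and Proposition~\ref{pro1821LJJJ} yields $\|\mathbb{L}_T\|<\gamma$.

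For sufficiency, assume $\|\mathbb{L}_T\|<\gamma$; the idea is to realize \eqref{1785ARE} as the linear recursion \eqref{1863YA+BF} closed by the optimal feedback. In \eqref{1863YA+BF}, $Y_T^{F,\gamma}(k)$ depends only on $F(k),\dots,F(T)$, and Lemma~\ref{1861nepro} guarantees $\Psi_3^\gamma(Y_T^{F,\gamma}(k{+}1))\leq-\eta_0 I_{r\times r}$ for every admissible $F$ and every $k\in\overline{0,T}$, so each $\Psi_3^\gamma(Y_T^{F,\gamma}(k{+}1))$ is invertible with inverse of norm at most $\eta_0^{-1}$. Hence I can build $F^*(T),F^*(T{-}1),\dots,F^*(0)$ backward by $F^*(k):=\mathcal{F}(Y_T^{F^*,\gamma}(k{+}1))$: this is well posed because $Y_T^{F^*,\gamma}(k{+}1)$ is already fixed by $F^*(k{+}1),\dots,F^*(T)$, and $F^*(k)\in\mathbb{H}_\infty^{r\times n}$ since $Y_T^{F^*,\gamma}(k{+}1)\in\mathbb{H}_\infty^n$, $A\in\mathbb{H}_\infty^n$, $B\in\mathbb{H}_\infty^{n\times r}$ (Assumption~\ref{286ass}(i)) and the inverse is uniformly bounded and measurable. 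Put $Y_T^\gamma:=Y_T^{F^*,\gamma}$. By \eqref{1882eq} the quadratic form with matrix $W(Y_T^\gamma,k,\ell)$ vanishes at $(I_{n\times n}',F^*(k,\ell)')'$; completing the square in the off-diagonal block (legitimate since $\Psi_3^\gamma(Y_T^\gamma(k{+}1))<0$) and using $F^*(k)=\mathcal{F}(Y_T^\gamma(k{+}1))$ to annihilate the residual quadratic term leaves exactly \eqref{1785ARE}, while \eqref{1794ineqxt} is Lemma~\ref{1861nepro} applied to $F=F^*$. Finally $Y_T^\gamma(k)\in\mathbb{H}_\infty^{n+}$: it is bounded as just noted, and $Y_T^\gamma(k)\geq0$ follows by backward induction from $Y_T^\gamma(T+1)=0$, since in \eqref{1785ARE} one has $\mathcal{T}_A(Y_T^\gamma(k{+}1))\geq0$ (positivity of $\mathcal{T}_A$, Remark~\ref{SH1SHinf441}), $C(\ell)'C(\ell)\geq0$, and $-\Psi_2(Y_T^\gamma(k{+}1))\Psi_3^\gamma(Y_T^\gamma(k{+}1))^{-1}\Psi_2(Y_T^\gamma(k{+}1))'\geq0$ because $\Psi_3^\gamma(Y_T^\gamma(k{+}1))<0$.

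The delicate step is the sufficiency construction: one must manufacture a bounded, measurable feedback $F^*$ whose closed loop reproduces the Riccati iteration, and this works only because $\Psi_3^\gamma(Y_T^{F,\gamma}(k{+}1))$ stays uniformly negative definite along the whole recursion — precisely what Lemma~\ref{1861nepro} supplies, and whose own proof had to replace the ``single bad point'' argument valid in the countable-state case by a positive-measure set of indices $\ell$ where the inequality fails. Granted that lemma, the rest is routine dissipation bookkeeping; the only extra care is keeping every constructed object bounded (hence in the relevant $\mathbb{H}_\infty$ space), which is immediate from Assumption~\ref{286ass}(i) and the uniform bound $\eta_0^{-1}$ on the inverses $\Psi_3^\gamma(\cdot)^{-1}$.
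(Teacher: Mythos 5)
Your proof is correct and follows essentially the same route as the paper's: the completion-of-squares identity built from $W$, Proposition~\ref{pro1821LJJJ} to translate $\|\mathbb{L}_{T}\|<\gamma$ into the sign of $J_{\gamma}(T,0,\cdot)$, and the backward construction through \eqref{1863YA+BF} with the feedback $\mathcal{F}(Y_{T}^{\gamma}(k+1))$, with Lemma~\ref{1861nepro} supplying the sign condition \eqref{1794ineqxt} at every step. The only discrepancy is cosmetic: your direction labels are swapped relative to the paper (what you call ``necessity,'' namely Riccati solvability implying $\|\mathbb{L}_{T}\|<\gamma$, is the paper's sufficiency part, and vice versa).
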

\begin{proof}
Sufficiency.
If \eqref{1785ARE} admits a solution $Y_{T}^{\gamma}(k)\in\mathbb{H}_{\infty}^{n+}$, $k\in\overline{0,T}$, then
\begin{eqnarray}\label{Jx01871}
J_{\gamma}(T,x_{0},v)\!
=\!&&\sum_{k=0}^{T}\!\mathbb{E}\!\left\{\!\left(\!
                            \begin{array}{c}
                                            x(k) \\
                                            v(k) \\
                                          \end{array}
                                        \!\right)'\!
                            W\!\left(Y_{T}^{\gamma},k,\vartheta\left(k\right)\right)
                            \!\left(\!
                            \begin{array}{c}
                                            x(k) \\
                                            v(k) \\
                                          \end{array}
                                       \! \right)\!
                            \right\}\nonumber \\
&&+\mathbb{E}\left\{x_{0}'Y_{T}^{\gamma}\left(\vartheta_{0}\right)x_{0}\right\},
\end{eqnarray}
where $x(\cdot)$ is the system state of $\Phi_{v}$.
Since $Y_{T}^{\gamma}(k)$, $k\in\overline{0,T}$ is the solution of \eqref{1785ARE}, then by letting $x_{0}=0$,
\eqref{Jx01871} can be rewritten as
$J_{\gamma}(T,0,v)=\sum_{k=0}^{T}\mathbb{E}\{[v(k)-\hat{v}(k)]'$ $\cdot\Psi_{3}^{\gamma}(Y_{T}^{\gamma}(k+1))(\vartheta(k))
[v(k)-\hat{v}(k)]\}\leq 0$,
in which $\hat{v}(k)=\mathcal{F}(Y_{T}^{\gamma}(k+1))(\vartheta(k))\phi_{x}(k,0,\vartheta_{0},v)$.
Combining this with \eqref{1794ineqxt}, one obtains that $J_{\gamma}(T,0,v)=0$ iff $v(k)=\hat{v}(k)$, $\forall k\in\overline{0,T}$.
Now substituting $\hat{v}(k)$ into $\Phi_{v}$ with $x_{0}=0$, we deduce that
$\phi_{x}(k,0,\vartheta_{0},\hat{v})\equiv 0$, which implies that for every $k\in\overline{0,T}$, $\hat{v}(k)=0$.
Hence, if $v(k)\neq0$, then
$J_{\gamma}(T,0,v)\!\leq\!-\eta_{0}\|v-\hat{v}\|^{2}
_{l^{2}(\overline{0,T};\mathbb{R}^{r})}<0.$
Therefore, $\|\mathbb{L}_{T}\|<\gamma$ is concluded by Proposition \ref{pro1821LJJJ}.\\
Necessity.
For $k=T$, by substituting $F(k,\ell)=\mathcal{F}(Y_{T}^{\gamma}(k+1))(\ell)$ into \eqref{1863YA+BF},
we can get that the obtained equation is consistent with \eqref{1785ARE}.
Moreover, $Y_{T}^{\gamma}(T, \ell)=C(\ell)'C(\ell)\geq 0$.
Using the assumption of $\left\|\mathbb{L}_{T}\right\|<\gamma$ and Lemma \ref{1861nepro}, it follows that
\eqref{1794ineqxt} is satisfied for $k=T$.
Next, \eqref{1785ARE} is iteratively proceeded backwards, starting from $T-1$ and stopping at $k=0$.
For each $k\in\{T-1,T-2,\cdots,0\}$, repeating the process as the case of $k=T$,
we assert that \eqref{1785ARE} admits a solution $Y_{T}^{\gamma}(k)\in\mathbb{H}_{\infty}^{n+}$ satisfying \eqref{1794ineqxt}.
\end{proof}

\subsection{The infinite horizon case}

In this subsection, we will give the infinite horizon BRL,
which can characterize internal stability and the $H_{\infty}$ performance of MJLSs
through the existence of the stabilizing solution of the $\Theta$-coupled ARE.
Before that, we provide the following definition,
which generalizes the notion of input-to-state stability to the system considered in this paper:
\begin{definition}
We call that $\Phi_{v}$ is input-to-state stable with conditioning, if for any $x_{0}\in\mathbb{R}^{n}$,
$x=\{x(k)\}_{k\in\mathbb{N}}\in l^{2}_{\Theta}(\mathbb{N};\mathbb{R}^{r})$ whenever $v\in l^{2}_{\Theta}(\mathbb{N};\mathbb{R}^{r})$,
where $x(\cdot)$ is the system state of $\Phi_{v}$.
\end{definition}

For the MJLS where the Markov chain takes values in a countably infinite set,
it was shown in \cite{Hou2016AC} that EMSSy-C of the autonomous system can guarantee the input-to-state stability of $\Phi_{v}$,
which is not achievable with stochastic stability.
The following proposition can be regarded as a generalization of Theorem 3.3 in \cite{Hou2016AC}.
Its proof is similar to that of Theorem 3.3 and thus is omitted.
\begin{proposition}\label{proemsscintst1447}
If $(\mathbf{A};\mathcal{G})$ is EMSS-C, then for any $\ell\in\Theta$,
$x=\{x(k)\}_{k\in\mathbb{N}}\in l^{2}_{\ell}(\mathbb{N};\mathbb{R}^{n})$ whenever $v\in l^{2}_{\ell}(\mathbb{N};\mathbb{R}^{r})$,
where $x(\cdot)$ is the system state of $\Phi_{v}$.
Furthermore, $\Phi_{v}$ is input-to-state stable with conditioning provided $(\mathbf{A};\mathcal{G})$ is EMSS-C.
\end{proposition}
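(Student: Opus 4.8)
The plan is to run a Lyapunov-type energy estimate for $\Phi_{v}$ in the conditional norm $\|\cdot\|_{l^{2}_{\ell}(\mathbb{N};\mathbb{R}^{n})}$, using the Lyapunov equation supplied by EMSS-C. Since $(\mathbf{A};\mathcal{G})$ is EMSS-C, Theorem~\ref{906theorem31234}\,(iii) furnishes $U\in\mathbb{H}_{\infty}^{n+*}$ with $U(\ell)-\mathcal{T}_{A}(U)(\ell)=I_{n\times n}$ $\mu\text{-}a.e.$. Fix $x_{0}\in\mathbb{R}^{n}$ and $v\in l^{2}_{\ell}(\mathbb{N};\mathbb{R}^{r})$, write $x(\cdot)=\phi_{x}(\cdot,x_{0},\vartheta_{0},v)$, and set $V(k):=x(k)'U(\vartheta(k))x(k)$. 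Reasoning as in the integration step of the proof of Lemma~\ref{le620diedai} (the Markov property together with the transition density $g(\cdot|\cdot)$) gives $\mathbb{E}\{U(\vartheta(k+1))\,|\,\mathfrak{F}_{k}\}=\mathcal{E}(U)(\vartheta(k))$ $a.s.$; since $x(k+1)=A(\vartheta(k))x(k)+B(\vartheta(k))v(k)$ is $\mathfrak{F}_{k}$-measurable, expanding the quadratic form and invoking the definitions of $\mathcal{T}_{A}$ in \eqref{TK}, of $\Psi_{2}$, and of $\mathcal{T}_{B}$ yields
\begin{equation*}
\begin{aligned}
\mathbb{E}\{V(k+1)\,|\,\mathfrak{F}_{k}\}={}&x(k)'\mathcal{T}_{A}(U)(\vartheta(k))x(k)\\
&{}+2x(k)'\Psi_{2}(U)(\vartheta(k))v(k)\\
&{}+v(k)'\mathcal{T}_{B}(U)(\vartheta(k))v(k)\quad a.s..
\end{aligned}
\end{equation*}

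Next I would substitute $\mathcal{T}_{A}(U)(\vartheta(k))=U(\vartheta(k))-I_{n\times n}$ --- legitimate $a.s.$ because for $k\geq1$ the conditional law of $\vartheta(k)$ given $\vartheta_{0}=\ell$ is absolutely continuous with respect to $\mu$ (it carries the $k$-step density built from $g$), while for $k=0$ the identity holds at the fixed point $\ell$ for $\mu\text{-}a.e.$ $\ell$ --- estimate the cross term by Young's inequality, and use $\|\mathcal{E}(U)\|_{\infty}\leq\|U\|_{\infty}$ (valid because $\int_{\Theta}g(s|\ell)\mu(ds)=1$), which makes $\|\Psi_{2}(U)\|_{\infty}$ and $\|\mathcal{T}_{B}(U)\|_{\infty}$ finite. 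Writing $p_{\ell}(k):=\mathbb{E}\{V(k)\,|\,\vartheta_{0}=\ell\}\geq0$ and $q_{\ell}(k):=\mathbb{E}\{\|x(k)\|^{2}\,|\,\vartheta_{0}=\ell\}$, this produces
\begin{equation*}
p_{\ell}(k+1)\leq p_{\ell}(k)-\frac{1}{2}q_{\ell}(k)+C\,\mathbb{E}\{\|v(k)\|^{2}\,|\,\vartheta_{0}=\ell\}
\end{equation*}
with $C:=2\|\Psi_{2}(U)\|_{\infty}^{2}+\|\mathcal{T}_{B}(U)\|_{\infty}$ independent of $k$ and $\ell$. Summing over $k=0,\dots,N-1$, discarding $p_{\ell}(N)\geq0$, and bounding $p_{\ell}(0)=x_{0}'U(\ell)x_{0}\leq\|U\|_{\infty}\|x_{0}\|^{2}$ gives $\frac{1}{2}\sum_{k=0}^{N-1}q_{\ell}(k)\leq\|U\|_{\infty}\|x_{0}\|^{2}+C\|v\|_{l^{2}_{\ell}(\mathbb{N};\mathbb{R}^{r})}^{2}$ for every $N$; letting $N\to\infty$ shows $\|x\|_{l^{2}_{\ell}(\mathbb{N};\mathbb{R}^{n})}^{2}=\sum_{k=0}^{\infty}q_{\ell}(k)<\infty$, i.e. $x\in l^{2}_{\ell}(\mathbb{N};\mathbb{R}^{n})$ (for $\mu\text{-}a.e.$ $\ell$, in accordance with the paper's convention). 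The second assertion is then immediate: if $v\in l^{2}_{\Theta}(\mathbb{N};\mathbb{R}^{r})$ then $\|v\|_{l^{2}_{\ell}(\mathbb{N};\mathbb{R}^{r})}<\infty$ $\mu\text{-}a.e.$, hence $\|x\|_{l^{2}_{\ell}(\mathbb{N};\mathbb{R}^{n})}<\infty$ $\mu\text{-}a.e.$, and since each $x(k)$ is $\mathfrak{F}_{k}$-measurable, $x\in l^{2}_{\Theta}(\mathbb{N};\mathbb{R}^{n})$; that is, $\Phi_{v}$ is input-to-state stable with conditioning.

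The step I expect to be the main obstacle is the measure-theoretic bookkeeping required to pass from the $\mu\text{-}a.e.$ identities --- the Lyapunov equation $U-\mathcal{T}_{A}(U)=\mathcal{I}$ and the uniform bounds on $\Psi_{2}(U)$ and $\mathcal{T}_{B}(U)$ --- to the corresponding $a.s.$ statements evaluated along the random trajectory $\vartheta(k)$. This hinges on $\mathcal{G}(\cdot|\ell)$ admitting a density with respect to $\mu$, so that the conditional laws of $\vartheta(k)$ for $k\geq1$ are absolutely continuous with respect to $\mu$, whereas the $k=0$ term must be handled separately by restricting $\ell$ to the conull set on which the Lyapunov equation holds. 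A secondary, purely routine point is the Fubini-type justification for interchanging the conditional expectation and the matrix-valued integral $\int_{\Theta}U(s)g(s|\vartheta(k))\mu(ds)$, which merely reproduces the computation already carried out in the proof of Lemma~\ref{le620diedai}.
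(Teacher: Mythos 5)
Your argument is correct. Note that the paper does not actually print a proof of this proposition: it defers to the countable-state result (Theorem 3.3 of Hou and Ma, 2016) and states that the proof is "similar". Your route is a self-contained Lyapunov-dissipation argument inside this paper's own operator framework: you take $U\in\mathbb{H}_{\infty}^{n+*}$ with $U-\mathcal{T}_{A}(U)=\mathcal{I}$ from Theorem~\ref{906theorem31234}(iii), compute the conditional drift of $x(k)'U(\vartheta(k))x(k)$ exactly as the paper does later in Proposition~\ref{2985jaccle} (the identity \eqref{2404suxtutx-x0}, there with $U-\mathcal{T}_{A}(U)=C'C$), absorb the cross term by Young's inequality, and telescope. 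Compared with the transition-matrix representation $x(k)=\Gamma(k,0)x_{0}+\sum_{j}\Gamma(k,j+1)B(\vartheta(j))v(j)$ plus exponential-decay/convolution estimates that underlie the cited countable-state proof, your approach avoids double sums and delivers an explicit ISS-type bound $\sum_{k}\mathbb{E}\{\|x(k)\|^{2}\,|\,\vartheta_{0}=\ell\}\leq 2\|U\|_{\infty}\|x_{0}\|^{2}+2C\|v\|_{l^{2}_{\ell}(\mathbb{N};\mathbb{R}^{r})}^{2}$, at the price of invoking the Lyapunov characterization of EMSS-C rather than the definition directly. Your measure-theoretic caveats are handled appropriately: the conditional law of $\vartheta(k)$ given $\vartheta_{0}=\ell$ is absolutely continuous with respect to $\mu$ for $k\geq1$, the $k=0$ term is covered by restricting to the conull set where the Lyapunov equation holds, and the resulting "$\mu$-a.e.\ $\ell$" conclusion matches the paper's standing convention (EMSS-C itself is only a $\mu$-a.e.\ statement). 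The only point left implicit, and worth one line, is that $p_{\ell}(k)=\mathbb{E}\{x(k)'U(\vartheta(k))x(k)\,|\,\vartheta_{0}=\ell\}$ is finite for each fixed $k$ (by induction from $A,B\in\mathbb{H}_{\infty}$, deterministic $x_{0}$, and $v\in l^{2}_{\ell}$), which is needed so that the telescoping subtraction is legitimate.
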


Note that Proposition \ref{proemsscintst1447} yields the next corollary:
\begin{corollary}\label{co2363lims}
If $(\mathbf{A};\mathcal{G})$ is EMSS-C,
then we have that $\lim_{k\rightarrow\infty}\mathbb{E}\{\|x(k)\|^{2}|\vartheta_{0}=\ell\}=0$ $\mu\text{-}a.e.$
whenever $\sum_{k=0}^{\infty} \mathbb{E}\{\|v(k)\|^{2}|\vartheta_{0}=\ell\}<\infty$ $\mu\text{-}a.e.$,
where $x(\cdot)$ is the system state of $\Phi_{v}$.
\end{corollary}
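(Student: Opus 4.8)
The plan is to derive this directly from Proposition~\ref{proemsscintst1447}. The hypothesis $\sum_{k=0}^{\infty}\mathbb{E}\{\|v(k)\|^{2}|\vartheta_{0}=\ell\}<\infty$ $\mu\text{-}a.e.$ says precisely that $v=\{v(k)\}_{k\in\mathbb{N}}\in l^{2}_{\ell}(\mathbb{N};\mathbb{R}^{r})$ for all $\ell$ outside some $\mu$-null set $N_{0}$. First I would fix an arbitrary $\ell\in\Theta\setminus N_{0}$ and invoke Proposition~\ref{proemsscintst1447}: since $(\mathbf{A};\mathcal{G})$ is EMSS-C, it yields $x=\{x(k)\}_{k\in\mathbb{N}}\in l^{2}_{\ell}(\mathbb{N};\mathbb{R}^{n})$, that is, $\sum_{k=0}^{\infty}\mathbb{E}\{\|x(k)\|^{2}|\vartheta_{0}=\ell\}<\infty$, where $x(\cdot)$ is the state response of $\Phi_{v}$.

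Next I would use the elementary fact that the general term of a convergent series of nonnegative reals tends to zero: from the finiteness of $\sum_{k=0}^{\infty}\mathbb{E}\{\|x(k)\|^{2}|\vartheta_{0}=\ell\}$ one immediately gets $\lim_{k\rightarrow\infty}\mathbb{E}\{\|x(k)\|^{2}|\vartheta_{0}=\ell\}=0$. Since $\ell\in\Theta\setminus N_{0}$ was arbitrary and $\mu(N_{0})=0$, this limit holds for $\mu$-almost all $\ell\in\Theta$, which is the assertion.

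There is no genuine analytic obstacle here — the substance is entirely carried by Proposition~\ref{proemsscintst1447}, whose proof (adapting Theorem~3.3 of \cite{Hou2016AC}) does the real work of turning the exponential decay encoded in EMSS-C into square-summability of the state trajectory under a square-summable input. The only point needing mild care is the measure-theoretic bookkeeping: one must keep track of the single $\mu$-null exceptional set coming from the hypothesis (together with, if needed, the a.e.-defined quantities used in Proposition~\ref{proemsscintst1447}), and note that a finite or countable union of $\mu$-null sets is again $\mu$-null, so that the concluding ``$\mu\text{-}a.e.$'' is legitimate.
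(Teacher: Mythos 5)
Your proposal is correct and matches the paper's intent exactly: the paper offers no separate proof, stating only that Proposition~\ref{proemsscintst1447} yields the corollary, and your argument (apply the proposition pointwise for each $\ell$ outside the exceptional $\mu$-null set to get $\sum_{k=0}^{\infty}\mathbb{E}\{\|x(k)\|^{2}|\vartheta_{0}=\ell\}<\infty$, then use that the general term of a convergent nonnegative series tends to zero) is precisely the intended one-line deduction. The measure-theoretic bookkeeping you mention is the only point of care, and you handle it correctly.
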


We need to emphasize that all the analysis below is under the assumption that $\Phi_{v}$ is internally stable,
i.e., $(\mathbf{A};\mathcal{G})$ is EMSS-C.

According to Proposition \ref{proemsscintst1447}, for any $\ell\in\Theta$, if $v\in l^{2}_{\ell}(\mathbb{N};\mathbb{R}^{r})$,
then $\{\phi_{x}(k,0,\ell,v)\}_{k\in\mathbb{N}}\in l^{2}_{\ell}(\mathbb{N};\mathbb{R}^{n})$
and a straightforward calculation yields $\{\phi_{y}(k,0,\ell,v)\}_{k\in\mathbb{N}}\in l^{2}_{\ell}(\mathbb{N};\mathbb{R}^{m})$.
Therefore, for every $k\in\mathbb{N}$, we can define the linear perturbation operator
$\mathfrak{L}_{\infty}(v)(k,\ell):=\phi_{y}(k,0,\ell,v)$
and its $H_{\infty}$ norm:
\begin{equation}\label{2359Linfnorm}
\begin{aligned}
\|\mathfrak{L}_{\infty}\|:&
=\esssup_{\substack{\ell\in\Theta,\ v\in l^{2}_{\ell}(\mathbb{N};\mathbb{R}^{r}),\\ v\neq0,\ x_{0}=0}}
\frac{\|\mathfrak{L}_{\infty}(v)\|_{l^{2}_{\ell}(\mathbb{N};\mathbb{R}^{m})}}
{\|v\|_{l^{2}_{\ell}(\mathbb{N};\mathbb{R}^{r})}}.
\end{aligned}
\end{equation}
\begin{remark}
Assumption 2 is necessary to ensure that $\|v\|_{l^{2}_{\ell}(\mathbb{N};\mathbb{R}^{r})}
=(\sum_{k=0}^{\infty}\mathbb{E}\{\|v(k)\|^{2}|\vartheta_{0}=\ell\})^{\frac{1}{2}}>0$  $\mu$-$a.e.$ on $\Theta$
and \eqref{2359Linfnorm} is well-defined.
Indeed, if Assumption 2 does not hold, then there exists $\Delta_{0}\in\mathcal{B}(\Theta)$ with $\mu(\Delta_{0})>0$ such that $\hat{\mu}(\vartheta_{0}\in\Delta_{0})=\int_{\Delta_{0}}\nu(\ell)\mu(d\ell)=0$,
where $\hat{\mu}$ is the distribution of $\vartheta_{0}$.
This implies that $\sum_{k=0}^{\infty}\mathbb{E}\{\|v(k)\|^{2}|\vartheta_{0}=\ell\}=0$ $\mu$-$a.e.$ on $\Delta_{0}$.
One can refer to Remark 8 in \cite{Marcos2018auto} for the case of finite MJLSs.
\end{remark}

Given $\gamma>0$, and letting $T\rightarrow\infty$ in \eqref{1811costf1},
for any $v\in l^{2}_{\Theta}(\mathbb{N};\mathbb{R}^{r})$,
we have the corresponding form for the infinite horizon case denoted by  $J_{\gamma}(\infty,x_{0},\ell,v)$.
Similar to the scenario of the finite horizon,
the following proposition on $\|\mathfrak{L}_{\infty}\|$ and $J_{\gamma}(\infty,x_{0},\ell,v)$ can be shown.
\begin{proposition}\label{pro2394LJJJ}
Given $\gamma>0$, for any $v\in l^{2}_{\Theta}(\mathbb{N};\mathbb{R}^{r})$ with $v\neq 0$, $J_{\gamma}(\infty,0,\ell,v)\leq0$ $\mu\text{-}a.e.$
iff $\|\mathfrak{L}_{\infty}\|\leq\gamma$.
Moreover, $J_{\gamma}(\infty,0,\ell,v)<0$ $\mu\text{-}a.e.$ iff $\|\mathfrak{L}_{\infty}\|<\gamma$.
\end{proposition}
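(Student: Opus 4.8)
\emph{Proof proposal.} The plan is to pass to the fibrewise behaviour of the perturbation operator and then reconcile it with the $\esssup$ in \eqref{2359Linfnorm}. By Proposition \ref{proemsscintst1447}, for each fixed $\ell\in\Theta$ the map $v\mapsto\{\mathfrak{L}_{\infty}(v)(k,\ell)\}_{k\in\mathbb{N}}$ is a well-defined linear operator $l^{2}_{\ell}(\mathbb{N};\mathbb{R}^{r})\to l^{2}_{\ell}(\mathbb{N};\mathbb{R}^{m})$, bounded by the closed graph theorem; write $\rho(\ell)$ for its operator norm, so that $\|\mathfrak{L}_{\infty}\|=\esssup_{\ell\in\Theta}\rho(\ell)$ by \eqref{2359Linfnorm}. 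The first step is the elementary identity, valid whenever $x_{0}=0$ and $v\in l^{2}_{\ell}(\mathbb{N};\mathbb{R}^{r})$, that $J_{\gamma}(\infty,0,\ell,v)=\|\mathfrak{L}_{\infty}(v)\|^{2}_{l^{2}_{\ell}(\mathbb{N};\mathbb{R}^{m})}-\gamma^{2}\|v\|^{2}_{l^{2}_{\ell}(\mathbb{N};\mathbb{R}^{r})}$, obtained by letting $T\to\infty$ in \eqref{1811costf1} and splitting the sum, the two resulting series converging precisely because $\Phi_{v}$ is internally stable.

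Next I would dispatch the ``if'' implications, which are routine. If $\|\mathfrak{L}_{\infty}\|\le\gamma$ then $\rho(\ell)\le\gamma$ for $\mu$-almost all $\ell$, so for any $v\in l^{2}_{\Theta}(\mathbb{N};\mathbb{R}^{r})$ with $v\neq0$ and $\mu$-almost all $\ell$ the identity yields $J_{\gamma}(\infty,0,\ell,v)\le(\rho(\ell)^{2}-\gamma^{2})\|v\|^{2}_{l^{2}_{\ell}(\mathbb{N};\mathbb{R}^{r})}\le0$ (the degenerate case $\|v\|_{l^{2}_{\ell}(\mathbb{N};\mathbb{R}^{r})}=0$ being trivial, the corresponding state and output then vanishing). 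For the strict ``moreover'', $\|\mathfrak{L}_{\infty}\|<\gamma$ forces $J_{\gamma}(\infty,0,\ell,v)\le(\|\mathfrak{L}_{\infty}\|^{2}-\gamma^{2})\|v\|^{2}_{l^{2}_{\ell}(\mathbb{N};\mathbb{R}^{r})}<0$ $\mu\text{-}a.e.$, using $\|v\|_{l^{2}_{\ell}(\mathbb{N};\mathbb{R}^{r})}>0$ $\mu\text{-}a.e.$ for $v\neq0$, which is what Assumption \ref{Ass2902} secures, exactly as in the remark following \eqref{2359Linfnorm}.

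The substantive part is the ``only if'' direction. Suppose $J_{\gamma}(\infty,0,\ell,v)\le0$ $\mu\text{-}a.e.$ for every $v\in l^{2}_{\Theta}(\mathbb{N};\mathbb{R}^{r})$, $v\neq0$, and assume toward a contradiction that $\|\mathfrak{L}_{\infty}\|>\gamma$. By the definition of the essential supremum there are $\varepsilon>0$ and $\Delta\in\mathcal{B}(\Theta)$ with $\mu(\Delta)>0$ such that $\rho(\ell)>\gamma+\varepsilon$ for all $\ell\in\Delta$, hence for each $\ell\in\Delta$ one can pick a unit vector $v^{\ell}\in l^{2}_{\ell}(\mathbb{N};\mathbb{R}^{r})$ with $J_{\gamma}(\infty,0,\ell,v^{\ell})>0$. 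The plan is then to glue the family $\{v^{\ell}\}_{\ell\in\Delta}$ into a single admissible disturbance $\hat{v}\in l^{2}_{\Theta}(\mathbb{N};\mathbb{R}^{r})$, $\hat{v}\neq0$, whose initial condition is confined to $\Delta$, such that $J_{\gamma}(\infty,0,\ell,\hat{v})>0$ on a $\mu$-positive subset of $\Delta$ --- contradicting the hypothesis. Making this precise is where measure theory enters: one selects $\ell\mapsto v^{\ell}$ measurably (legitimate since $\Theta$ is Borel isomorphic to $\mathbb{R}^{1}$ and the data are Borel, so a Kuratowski--Ryll-Nardzewski-type selection applies), assembles $\hat{v}$ as an $\mathfrak{F}_{k}$-adapted sequence built from $\vartheta(\cdot)$ and restricted to $\{\vartheta(0)\in\Delta\}$, and finally invokes Assumptions \ref{Ass2902} and \ref{1977assgtl0} (positivity of the density $\nu$ and reachability of $\mu$-almost every state) to turn the strict positivity on $\Delta$ into a genuine violation of the $\mu\text{-}a.e.$ inequality --- precisely as in the closing lines of the proof of Lemma \ref{1861nepro}. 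The strict ``moreover'' for this direction is handled analogously.

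I expect this gluing/selection step, rather than any of the Hilbert-space estimates, to be the main obstacle: in the countable-state setting it collapses to picking a single offending point of $\Theta$, whereas here one is forced to work with a positive-measure set of offending states and with a measurable choice of fibre counterexamples over it, which is exactly the kind of passage from ``pointwise in $\ell$'' to ``integrated'' information that the $\esssup$-based definition of $\|\mathfrak{L}_{\infty}\|$ was introduced to accommodate.
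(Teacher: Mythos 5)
The paper never writes this proof out: Proposition \ref{pro2394LJJJ} is declared to follow ``similar to the finite horizon'' case, whose proof is itself omitted as simple, and the intended content is exactly your first half --- the identity $J_{\gamma}(\infty,0,\ell,v)=\|\mathfrak{L}_{\infty}(v)\|_{l^{2}_{\ell}(\mathbb{N};\mathbb{R}^{m})}^{2}-\gamma^{2}\|v\|_{l^{2}_{\ell}(\mathbb{N};\mathbb{R}^{r})}^{2}$ combined with the $\esssup$ definition \eqref{2359Linfnorm}. So the question is whether your completion of the ``only if'' directions works, and there I see two genuine gaps. The selection-and-gluing step, which you yourself flag as the crux, is not carried out and is not justified as sketched: the fibres $l^{2}_{\ell}(\mathbb{N};\mathbb{R}^{r})$ are $\ell$-dependent spaces of adapted processes on $\Omega$, so a Kuratowski--Ryll-Nardzewski selection (which wants a fixed Polish target and a weakly measurable multifunction) does not apply off the shelf, and even granted a selection you would still need joint measurability of $(\ell,\omega)\mapsto v^{\ell}(k)(\omega)$ and $\mathfrak{F}_{k}$-adaptedness of the glued $\hat v$, none of which is addressed. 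Moreover the machinery is avoidable: since each $\mathfrak{F}_{k}=\sigma(x(0),\vartheta(0),\dots,x(k),\vartheta(k))$ is countably generated, there is one countable family $D$ of bounded, finitely-time-supported adapted disturbances (rational simple functions over a generating algebra) that is dense in $l^{2}_{\ell}(\mathbb{N};\mathbb{R}^{r})$ for $\mu$-almost every $\ell$; applying the hypothesis to each $v\in D$, discarding a single countable union of null sets, and using the boundedness of your fibre operator $\rho(\ell)$ converts ``for each $v$, $\mu$-a.e.\ $\ell$'' into ``$\mu$-a.e.\ $\ell$, for all $v\in l^{2}_{\ell}$'', which is the entire quantifier interchange --- no gluing needed.

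The second gap is the claim that the strict ``moreover'' is ``handled analogously''. Your contradiction argument only excludes $\|\mathfrak{L}_{\infty}\|>\gamma$: if $\|\mathfrak{L}_{\infty}\|=\gamma$ with the fibre suprema not attained, every admissible $v$ still gives $J_{\gamma}(\infty,0,\ell,v)<0$ wherever $\|v\|_{l^{2}_{\ell}(\mathbb{N};\mathbb{R}^{r})}>0$, so there is no disturbance with $J\geq 0$ on a positive-measure set and the construction produces nothing to contradict; gluing near-maximizers only yields $J$ close to $0$ from below. That implication therefore needs a different, quantitative input (e.g.\ the uniform estimate $J_{\gamma}(\infty,0,\ell,v)\leq-\eta_{0}\|v-\hat v\|^{2}_{l^{2}_{\ell}(\mathbb{N};\mathbb{R}^{r})}$ that is actually available when the proposition is invoked in Theorem \ref{infbrl3404}), or it must be recognized as more delicate than your sketch suggests. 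A smaller point: in the easy strict direction you rely on $\|v\|_{l^{2}_{\ell}(\mathbb{N};\mathbb{R}^{r})}>0$ $\mu$-a.e.\ for every $v\neq0$; Assumption \ref{Ass2902} does not give this for disturbances supported on $\{\vartheta_{0}\in\Delta\}$ with $\mu(\Theta\setminus\Delta)>0$, though here you are only mirroring the paper's own remark after \eqref{2359Linfnorm}.
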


The following job is to apply the finite horizon BRL developed in Subsection \ref{2028finhcbrl} to work out the infinite horizon scenario.
First, we present two necessary propositions.
\begin{proposition}\label{2504<0lTlinf}
For every $T\in\mathbb{N}$, $\|\mathbb{L}_{T}\|\leq\|\mathfrak{L}_{\infty}\|$.
\end{proposition}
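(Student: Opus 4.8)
The plan is to prove this by a causal zero-padding argument. Every admissible input for the finite-horizon operator $\mathbb{L}_{T}$ can be viewed as an infinite-horizon input that is switched off after time $T$, and the infinite-horizon energy of the corresponding output dominates the finite-horizon one because the terms beyond time $T$ are nonnegative. The only genuine work is reconciling the two norm conventions: $\|\mathbb{L}_{T}\|$ in \eqref{1917tLTnorm} is defined through \emph{unconditional} expectations (with $\vartheta_{0}$ distributed according to $\hat\mu$), whereas $\|\mathfrak{L}_{\infty}\|$ in \eqref{2359Linfnorm} is defined through \emph{conditional} expectations and an $\esssup$ over $\ell\in\Theta$. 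This gap is bridged by integrating a pointwise estimate (in $\ell$) against the Radon--Nikodym density $\nu$.

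First I would fix $T\in\mathbb{N}$ and an arbitrary $v\in l^{2}(\overline{0,T};\mathbb{R}^{r})$ with $v\neq 0$ and $x_{0}=0$, and define its causal extension $\bar v=\{\bar v(k)\}_{k\in\mathbb{N}}$ by $\bar v(k)=v(k)$ for $k\in\overline{0,T}$ and $\bar v(k)=0$ for $k>T$. Each $\bar v(k)$ is $\mathfrak{F}_{k}$-measurable and, for every $\ell\in\Theta$, $\|\bar v\|_{l^{2}_{\ell}(\mathbb{N};\mathbb{R}^{r})}^{2}=\sum_{k=0}^{T}\mathbb{E}\{\|v(k)\|^{2}\,|\,\vartheta_{0}=\ell\}$. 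Integrating the right-hand side against $\nu(\ell)\mu(d\ell)$ yields $\|v\|_{l^{2}(\overline{0,T};\mathbb{R}^{r})}^{2}<\infty$, so the integrand is finite for $\hat\mu$-a.e.\ $\ell$ and hence, by Assumption~\ref{Ass2902}, for $\mu$-a.e.\ $\ell$; together with $v\neq0$ this shows $\bar v\in l^{2}_{\Theta}(\mathbb{N};\mathbb{R}^{r})$ with $\bar v\neq0$. Since the standing assumption of this subsection is that $(\mathbf{A};\mathcal{G})$ is EMSS-C, Proposition~\ref{proemsscintst1447} guarantees that $\mathfrak{L}_{\infty}(\bar v)$ is well-defined.

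Then I would exploit causality: for $k\in\overline{0,T}$ the state $\phi_{x}(k,0,\ell,\bar v)$ depends only on $\bar v(0),\dots,\bar v(k-1)=v(0),\dots,v(k-1)$, so $\phi_{y}(k,0,\ell,\bar v)=\phi_{y}(k,0,\ell,v)$ there. Since all summands in $\|\mathfrak{L}_{\infty}(\bar v)\|_{l^{2}_{\ell}(\mathbb{N};\mathbb{R}^{m})}^{2}$ are nonnegative, this gives $\|\mathfrak{L}_{\infty}(\bar v)\|_{l^{2}_{\ell}(\mathbb{N};\mathbb{R}^{m})}^{2}\geq\sum_{k=0}^{T}\mathbb{E}\{\|\phi_{y}(k,0,\ell,v)\|^{2}\,|\,\vartheta_{0}=\ell\}$ for every $\ell$. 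Applying the definition \eqref{2359Linfnorm} of $\|\mathfrak{L}_{\infty}\|$ to the single fixed input $\bar v$, one has $\|\mathfrak{L}_{\infty}(\bar v)\|_{l^{2}_{\ell}(\mathbb{N};\mathbb{R}^{m})}\leq\|\mathfrak{L}_{\infty}\|\,\|\bar v\|_{l^{2}_{\ell}(\mathbb{N};\mathbb{R}^{r})}$ for $\mu$-a.e.\ $\ell$; and on the set of $\ell$ for which $\|\bar v\|_{l^{2}_{\ell}(\mathbb{N};\mathbb{R}^{r})}=0$ (equivalently $v(k)=0$ a.s.\ on $\{\vartheta_{0}=\ell\}$ for all $k\leq T$) the state and output vanish, so the bound holds trivially. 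Combining these, $\sum_{k=0}^{T}\mathbb{E}\{\|\phi_{y}(k,0,\ell,v)\|^{2}\,|\,\vartheta_{0}=\ell\}\leq\|\mathfrak{L}_{\infty}\|^{2}\sum_{k=0}^{T}\mathbb{E}\{\|v(k)\|^{2}\,|\,\vartheta_{0}=\ell\}$ for $\mu$-a.e.\ $\ell$.

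Finally I would integrate this pointwise estimate against $\nu(\ell)\mu(d\ell)$ and invoke the law of total expectation $\int_{\Theta}\mathbb{E}\{\,\cdot\,|\,\vartheta_{0}=\ell\}\nu(\ell)\mu(d\ell)=\mathbb{E}\{\cdot\}$ (as used in Proposition~\ref{1836proJTXOVINT}): the left side becomes $\sum_{k=0}^{T}\mathbb{E}\{\|\phi_{y}(k,0,\vartheta_{0},v)\|^{2}\}=\|\mathbb{L}_{T}(v)\|_{l^{2}(\overline{0,T};\mathbb{R}^{m})}^{2}$ and the right side becomes $\|\mathfrak{L}_{\infty}\|^{2}\|v\|_{l^{2}(\overline{0,T};\mathbb{R}^{r})}^{2}$, so $\|\mathbb{L}_{T}(v)\|_{l^{2}(\overline{0,T};\mathbb{R}^{m})}\leq\|\mathfrak{L}_{\infty}\|\,\|v\|_{l^{2}(\overline{0,T};\mathbb{R}^{r})}$. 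Dividing by $\|v\|_{l^{2}(\overline{0,T};\mathbb{R}^{r})}>0$ and taking the supremum over admissible $v$ in \eqref{1917tLTnorm} gives $\|\mathbb{L}_{T}\|\leq\|\mathfrak{L}_{\infty}\|$. I expect the main obstacle to be purely the measure-theoretic bookkeeping---in particular, reading off from the $\esssup$ in \eqref{2359Linfnorm} the desired $\mu$-a.e.\ operator bound for the fixed input $\bar v$, and disposing of the exceptional $\ell$-set where $\|\bar v\|_{l^{2}_{\ell}(\mathbb{N};\mathbb{R}^{r})}=0$; the dynamical content, namely causality together with nonnegativity of the tail terms, is elementary.
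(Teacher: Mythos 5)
Your proposal is correct and is essentially the paper's own argument: a zero-padding/causality estimate pointwise in $\ell$, obtained from the $\esssup$ definition of $\|\mathfrak{L}_{\infty}\|$ applied to the padded input, followed by integration against $\nu(\ell)\mu(d\ell)$ (law of total expectation under Assumption~\ref{Ass2902}). The only difference is organizational: the paper routes the bound through an intermediate conditional finite-horizon operator $\mathfrak{L}_{T}$, proving $\|\mathbb{L}_{T}\|\leq\|\mathfrak{L}_{T}\|\leq\|\mathfrak{L}_{\infty}\|$, whereas you merge those two steps into one (and, correctly, state the comparison between the truncated and padded ratios as an inequality rather than the equality written in the paper).
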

\begin{proof}
For every $T\in\mathbb{N}$ and $\ell\in\Theta$, if $v\in l^{2}_{\ell}(\overline{0,T};\mathbb{R}^{r})$,
then $\{\phi_{y}(k,0,\ell,v)\}_{k\in \overline{0,T}}\in l^{2}_{\ell}(\overline{0,T};\mathbb{R}^{m})$.
Therefore, we can give the linear perturbation operator
$\mathfrak{L}_{T}(v)(k,\ell)=\phi_{y}(k,0,\ell,v),\ \forall k\in\overline{0,T}$
and its  $H_{\infty}$ norm:
\begin{equation*}\label{2575LinfTnorm}
\begin{aligned}
\|\mathfrak{L}_{T}\|:&
=\esssup_{\substack{\ell\in\Theta,\   v\in l^{2}_{\ell}(\overline{0,T};\mathbb{R}^{r}),\\
v\neq0,\ x_{0}=0}}
\frac{\|\mathfrak{L}_{T}(v)\|_{l^{2}_{\ell}
(\overline{0,T};\mathbb{R}^{m})}}
{\|v\|_{l^{2}_{\ell}(\overline{0,T};\mathbb{R}^{r})}}.
\end{aligned}
\end{equation*}
Hence, for any $v\in l^{2}_{\Theta}(\overline{0,T};\mathbb{R}^{r})$,
$\|\mathfrak{L}_{T}\|\cdot \sum_{k=0}^{T}\mathbb{E}\{\|v(k)\|^{2}$
$|\vartheta_{0}=\ell\}\geq\sum_{k=0}^{T}
\mathbb{E}\{\|\phi_{y}(k,0,\vartheta_{0},v)\|^{2}|\vartheta_{0}=\ell\}$ $\mu\text{-}a.e.$.
According to Assumption \ref{Ass2902} and Theorem 15.2 in \cite{BookBillingsley1995},
it follows that for any $v\in l^{2}(\overline{0,T};\mathbb{R}^{r})$,
$\sum_{k=0}^{T}\mathbb{E}\{\|\phi_{y}(k,0,\vartheta_{0},v)\|^{2}\}\!\leq\!
\|\mathfrak{L}_{T}\|\cdot \sum_{k=0}^{T}\!\mathbb{E}\{\|v(k)\|^{2}\}$.
Further, bearing in mind \eqref{1917tLTnorm}, we have that $\|\mathbb{L}_{T}\|\leq\|\mathfrak{L}_{T}\|$.
Next, we will prove $\|\mathfrak{L}_{T}\|\leq\|\mathfrak{L}_{\infty}\|$.
For any $v\in l^{2}_{\ell}(\overline{0,T};\mathbb{R}^{r})$, $\ell\in\Theta$,
setting $\bar{v}=\{\bar{v}(k)\}_{k\in\mathbb{N}}\in l^{2}_{\ell}(\mathbb{N};\mathbb{R}^{r})$ with
$\bar{v}(k)=\left\{
\begin{aligned}
v(k),& \ \forall k\in \overline{0,T},  \\
0,\ \ &  otherwise,
\end{aligned} \right.$
one can get that
\begin{equation*}
\begin{split}
&\frac{\sum_{k=0}^{T}
\mathbb{E}\{\|\phi_{y}(k,0,\vartheta_{0},v)\|^{2}
|\vartheta_{0}=\ell\}}{\sum_{k=0}^{T}
\mathbb{E}\{\|v(k)\|^{2}
|\vartheta_{0}=\ell\}}\\
=&\frac{\sum_{k=0}^{\infty}
\mathbb{E}\{\|\phi_{y}(k,0,\vartheta_{0},\bar{v})\|^{2}
|\vartheta_{0}=\ell\}}{\sum_{k=0}^{\infty}
\mathbb{E}\{\|\bar{v}(k)\|^{2}
|\vartheta_{0}=\ell\}}\leq
\|\mathfrak{L}_{\infty}\|,
\end{split}
\end{equation*}
which leads to $\|\mathfrak{L}_{T}\|\leq\|\mathfrak{L}_{\infty}\|$.
Therefore, $\|\mathbb{L}_{T}\|\leq\|\mathfrak{L}_{\infty}\|$ is carried out from $\|\mathbb{L}_{T}\|\leq\|\mathfrak{L}_{T}\|$.
\end{proof}
\begin{proposition}\label{2985jaccle}
Given $\gamma>0$,
for any $x_{0}\in\mathbb{R}^{n}$ and $v\in l^{2}_{\Theta}(\mathbb{N};\mathbb{R}^{r})$, if $\Phi_{v}$ is internally stable and satisfies $\|\mathfrak{L}_{\infty}\|<\gamma$,
then
there exists
$\zeta>0$ such that $J_{\gamma}\left(\infty,x_{0},\ell, v\right) \leq \zeta\left\|x_{0}\right\|^{2}$ $\mu\text{-}a.e.$.
\end{proposition}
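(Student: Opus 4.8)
The plan is to split the response of $\Phi_v$ with initial data $(x_0,\ell)$ into its zero-input and zero-state parts, to absorb the zero-state part into the $-\gamma^2\|v\|^2$ term using $\|\mathfrak{L}_\infty\|<\gamma$, and to dominate the zero-input part by the exponential decay coming from EMSS-C. Fix $\gamma>0$, $x_0\in\mathbb{R}^n$ and $v\in l^2_\Theta(\mathbb{N};\mathbb{R}^r)$. By linearity of $\Phi_v$ in $(x_0,v)$, for $\mu$-almost every $\ell\in\Theta$ I would write $\phi_y(k,x_0,\ell,v)=y_1(k)+y_2(k)$, where $y_1(k):=\phi_y(k,x_0,\ell,0)=C(\vartheta(k))\bar x(k)$ with $\bar x(\cdot)$ the state trajectory of $(\mathbf{A};\mathcal{G})$ started from $x_0$, and $y_2(k):=\phi_y(k,0,\ell,v)=\mathfrak{L}_\infty(v)(k,\ell)$. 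Proposition \ref{proemsscintst1447} together with EMSS-C guarantees that $\sum_{k=0}^{\infty}\mathbb{E}\{\|\bar x(k)\|^2|\vartheta_0=\ell\}$, $\sum_{k=0}^{\infty}\mathbb{E}\{\|y_2(k)\|^2|\vartheta_0=\ell\}$ and $\sum_{k=0}^{\infty}\mathbb{E}\{\|v(k)\|^2|\vartheta_0=\ell\}$ are all finite $\mu\text{-}a.e.$, so that $J_\gamma(\infty,x_0,\ell,v)$ is a well-defined real number $\mu\text{-}a.e.$; the case $v=0$ being covered by the zero-input estimate below, I may further assume $\|v\|_{l^2_\ell(\mathbb{N};\mathbb{R}^r)}>0$ $\mu\text{-}a.e.$.

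Next, for a parameter $\varepsilon>0$ to be fixed later I would use Young's inequality $\|y_1(k)+y_2(k)\|^2\le(1+\varepsilon^{-1})\|y_1(k)\|^2+(1+\varepsilon)\|y_2(k)\|^2$, take conditional expectations given $\vartheta_0=\ell$, sum over $k$, and subtract $\gamma^2\sum_{k}\mathbb{E}\{\|v(k)\|^2|\vartheta_0=\ell\}$. Since $\sum_{k}\mathbb{E}\{\|y_2(k)\|^2|\vartheta_0=\ell\}=\|\mathfrak{L}_\infty(v)\|_{l^2_\ell(\mathbb{N};\mathbb{R}^m)}^{2}\le\|\mathfrak{L}_\infty\|^2\,\|v\|_{l^2_\ell(\mathbb{N};\mathbb{R}^r)}^{2}$ $\mu\text{-}a.e.$ by \eqref{2359Linfnorm}, this yields $J_\gamma(\infty,x_0,\ell,v)\le(1+\varepsilon^{-1})\sum_{k}\mathbb{E}\{\|y_1(k)\|^2|\vartheta_0=\ell\}+\big[(1+\varepsilon)\|\mathfrak{L}_\infty\|^2-\gamma^2\big]\|v\|_{l^2_\ell(\mathbb{N};\mathbb{R}^r)}^{2}$ $\mu\text{-}a.e.$. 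Because $\|\mathfrak{L}_\infty\|<\gamma$, I would fix $\varepsilon>0$ small enough that $(1+\varepsilon)\|\mathfrak{L}_\infty\|^2\le\gamma^2$, so the bracketed term is nonpositive and $J_\gamma(\infty,x_0,\ell,v)\le(1+\varepsilon^{-1})\sum_{k}\mathbb{E}\{\|y_1(k)\|^2|\vartheta_0=\ell\}$ $\mu\text{-}a.e.$.

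It then remains to bound the zero-input term uniformly in $\ell$. From $y_1(k)=C(\vartheta(k))\bar x(k)$ and $C\in\mathbb{H}^{m\times n}_\infty$ one gets $\|y_1(k)\|^2\le\|C\|_\infty^2\|\bar x(k)\|^2$, while EMSS-C supplies $\beta\ge1$, $\alpha\in(0,1)$ with $\mathbb{E}\{\|\bar x(k)\|^2|\vartheta_0=\ell\}\le\beta\alpha^k\|x_0\|^2$ $\mu\text{-}a.e.$; summing the geometric series gives $\sum_{k}\mathbb{E}\{\|y_1(k)\|^2|\vartheta_0=\ell\}\le\frac{\beta\|C\|_\infty^2}{1-\alpha}\|x_0\|^2$ $\mu\text{-}a.e.$, whence the claim with $\zeta:=(1+\varepsilon^{-1})\frac{\beta\|C\|_\infty^2}{1-\alpha}$. (Alternatively, by Lemma \ref{le620diedai} the zero-input term equals $x_0'U(\ell)x_0$ with $U\in\mathbb{H}^{n+}_\infty$ the solution of \eqref{1367utaucc} furnished by Theorem \ref{1365coIF1367EMSSC}, hence is $\le\|U\|_\infty\|x_0\|^2$ $\mu\text{-}a.e.$; either route gives $\zeta$ independent of $\ell$.) I expect the only genuinely delicate point to be the passage from the $\esssup$ definition of $\|\mathfrak{L}_\infty\|$ to the pointwise-in-$\ell$ operator bound $\|\mathfrak{L}_\infty(v)\|_{l^2_\ell(\mathbb{N};\mathbb{R}^m)}\le\|\mathfrak{L}_\infty\|\,\|v\|_{l^2_\ell(\mathbb{N};\mathbb{R}^r)}$ valid for $\mu$-almost every $\ell$ (the exceptional null set being allowed to depend on the fixed $v$), after which one intersects the countably many $\mu\text{-}a.e.$ statements invoked above; everything else is routine estimation.
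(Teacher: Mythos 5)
Your proposal is correct, but it proceeds by a genuinely different and more elementary route than the paper. The paper's proof is a storage-function argument: it invokes Theorem \ref{1365coIF1367EMSSC} to obtain $U\in\mathbb{H}^{n+}_{\infty}$ with $U-\mathcal{T}_{A}(U)=C'C$, writes the dissipation identity for $\mathbb{E}\{x(T+1)'U(\vartheta(T+1))x(T+1)|\vartheta_{0}=\ell\}$, lets $T\to\infty$ using Corollary \ref{co2363lims} to kill the terminal term, thereby gets an exact expression for $J_{\gamma}(\infty,x_{0},\ell,v)$, subtracts the $x_{0}=0$ cost, bounds $J_{\gamma}(\infty,0,\ell,v)\leq-\varepsilon^{2}\|v\|^{2}_{l^{2}_{\ell}}$ via Proposition \ref{pro2394LJJJ}, and completes the square on the cross term $2\bar{x}(k)'\Psi_{2}(U)(\vartheta(k))v(k)$, ending with $\zeta>\|U\|_{\infty}+\tfrac{\beta}{\varepsilon^{2}(1-\alpha)}\|U\|_{\infty}^{2}\|A\|_{\infty}^{2}\|B\|_{\infty}^{2}$. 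You instead superpose at the output level, $\phi_{y}(k,x_{0},\ell,v)=y_{1}(k)+y_{2}(k)$, absorb the zero-state energy into $-\gamma^{2}\|v\|^{2}$ by the $\mu$-a.e.\ operator-norm bound from \eqref{2359Linfnorm} together with Young's inequality, and dominate the zero-input output energy by $\|C\|_{\infty}^{2}\tfrac{\beta}{1-\alpha}\|x_{0}\|^{2}$ from EMSS-C; this avoids the Lyapunov solution $U$, the terminal-limit step, and the completion of squares entirely, and still yields a $\zeta$ uniform in $(x_{0},v,\ell)$, which is what Lemma \ref{2843gainkex} later needs. Both arguments hinge on the same two pillars — the passage from the $\esssup$ definition of $\|\mathfrak{L}_{\infty}\|$ to a pointwise-in-$\ell$ bound (used identically by the paper in Proposition \ref{2504<0lTlinf} and Proposition \ref{pro2394LJJJ}), and the geometric decay $\mathbb{E}\{\|\bar{x}(k)\|^{2}|\vartheta_{0}=\ell\}\leq\beta\alpha^{k}\|x_{0}\|^{2}$ — and your handling of the countably many null sets and of well-definedness of $J_{\gamma}(\infty,x_{0},\ell,v)$ via Proposition \ref{proemsscintst1447} is at the same level of rigor as the paper's own usage; the paper's longer route has the side benefit of exhibiting the exact cost identity through $U$ (in the style reused in the sufficiency part of Theorem \ref{infbrl3404}), but that identity is not needed for this proposition.
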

\begin{proof}
Let $T\in\mathbb{N}$ be arbitrary but fixed.
If $\Phi_{v}$ is internally stable,
then by Theorem \ref{1365coIF1367EMSSC},
there is $U\in\mathbb{H}^{n+}_{\infty}$ satisfying $U\!-\!\mathcal{T}_{A}(U)\!=\!C'C.$
This leads to
\begin{eqnarray}\label{2404suxtutx-x0}
\begin{split}
&\mathbb{E}\{x(T\!+\!1)'U(\vartheta(T\!+\!1))
x(T\!+\!1)|\vartheta_{0}=\ell\}\!\\
=&x_{0}'U(\ell)x_{0}+\sum_{k=0}^{T}\mathbb{E}\{
-x(k)'C(\vartheta(k))'C(\vartheta(k))x(k)\\
+\!&2x(k)'\Psi_{2}(\!U\!)(\vartheta(k))v(k) \!+\!v(k)'\mathcal{T}_{B}(\!U\!)(\vartheta(k))v(k)
|\vartheta_{0}\!=\!\ell\},
\end{split}
\end{eqnarray}
where $x(\cdot)$ is given by $\Phi_{v}$ with $x(0)=x_{0}$.
For any $v\in l^{2}_{\Theta}(\mathbb{N};\mathbb{R}^{r})$, according to Corollary \ref{co2363lims}, one has that
$\lim_{T\rightarrow\infty}\mathbb{E}\{x(T)'U(\vartheta(T))x(T)|\vartheta_{0}\!=\!\ell\}\!=\!0.$
Letting $T\!\rightarrow\!\infty$ in \eqref{2404suxtutx-x0}, it follows that
$J_{\gamma}\left(\infty,\!x_{0},\!\ell,\! v\right)
\!=\!\sum_{k=0}^{\infty}\mathbb{E}\!\{[2x(k)'
\Psi_{2}(U)(\vartheta(\!k\!))v(k)\!+\!v(k)'\Psi_{3}^{\gamma}(U)(\vartheta(k))v(k)]$ $|\vartheta_{0}=\ell\}
\!+\!x_{0}'U(\ell)x_{0}.$
Then,
$J_{\gamma}\left(\infty,x_{0},\ell, v\right)-J_{\gamma}\left(\infty,0,\ell, v\right)$
$\!=\!x_{0}'U(\ell)x_{0}
+\!\sum_{k=0}^{\infty}\!
\mathbb{E}\{2\bar{x}(k)'
\Psi_{2}(U)(\vartheta(k)) v(k)|\vartheta_{0}\!=\!\ell\},$
where $\bar{x}(k)$ is the system state of $(\mathbf{A};\mathcal{G})$.
Considering that $\|\mathfrak{L}_{\infty}\|<\gamma$,
and taking $\varepsilon$ with $\|\mathfrak{L}_{\infty}\|^{2}\leq\gamma^{2}-\varepsilon^{2}$,
$J_{\gamma}\left(\infty,0,\ell,v\right)\leq-\varepsilon^{2}\sum_{k=0}^{\infty}\mathbb{E}\{\|v(k)\|^{2}|\vartheta_{0}=\ell\}$
can be concluded from Proposition \ref{pro2394LJJJ}.
So,
$J_{\gamma}(\infty,x_{0},\ell, v)
\leq \frac{1}{\varepsilon^{2}}
\sum_{k=0}^{\infty}
\mathbb{E}\{\|\Psi_{2}(U)(\vartheta(k))
\bar{x}(k)\|^{2}
|\vartheta_{0}\!=\!\ell\}$$+x_{0}'U(\ell)x_{0}+\Pi,$
where $\Pi\!=\!-\!\sum_{k=0}^{\infty}\!\mathbb{E}\{\|\varepsilon v(k)\!-\!\frac{1}{\varepsilon}\Psi_{2}(U)(\vartheta(k))
\bar{x}(k)\|^{2}|\vartheta_{0}=\ell\}\leq0.$
Since $(\mathbf{A};\mathcal{G})$ is EMSS-C, there are $\beta \geq 1$, $ \alpha \in(0,1)$ such that
$\sum_{k=0}^{\infty}\mathbb{E}\{\|\bar{x}(k)\|^{2}|$ $\vartheta_{0}=\ell\}\leq   \frac{\beta}{1-\alpha}\|x_{0}\|^{2}.$
Setting $\zeta$ with $\zeta>\|U\|_{\infty}+\frac{\beta}{\varepsilon^{2}(1-\alpha)}\|U\|_{\infty}^{2}
\|A\|_{\infty}^{2}\|B\|_{\infty}^{2}\geq0$,
$J_{\gamma}\left(\infty,x_{0},\ell, v\right)\leq \zeta\|x_{0}\|^{2}$ is then fulfilled.
\end{proof}

Given $\gamma>0$, for every $T\in\mathbb{N}$ and $k\in \overline{0,T+1}$, define
$K_{T}^{\gamma}(k)=\{K_{T}^{\gamma}(k,\ell)\}_{\ell\in \Theta}$ by
\begin{equation}\label{2295kyre}
K_{T}^{\gamma}(k,\ell)=Y_{T}^{\gamma}(T+1-k,\ell),\ \ell\in\Theta,
\end{equation}
where $Y_{T}^{\gamma}(\cdot)$ is the solution of \eqref{1785ARE}.
Obviously, $K_{T}^{\gamma}(0,\ell)=0,\ \ell\in\Theta$.
And $K_{T}^{\gamma}(k)$, $k\in\mathbb{N}$ solves the following difference equation:
\begin{equation}\label{2299fordi}
\begin{split}
K^{\gamma}(k+1,\ell)=&\Psi_{1}(K^{\gamma}(k))(\ell)
-\Psi_{2}(K^{\gamma}(k))(\ell)\\
& \cdot\Psi_{3}^{\gamma}(K^{\gamma}(k))(\ell)^{-1}
\Psi_{2}(K^{\gamma}(k))(\ell)'.
\end{split}
\end{equation}
In addition, for every $T_{1}\in\mathbb{N}^{+}$ and $ T_{2}\in\mathbb{N}^{+},$
one can get that $K^{\gamma}_{T_{1}}(k)=K^{\gamma}_{T_{2}}(k),\ \forall k\in \overline{0,\min\{T_{1},T_{2}\}}$.
Therefore, we could write $K^{\gamma}(k)=\{K^{\gamma}(k,\ell)\}_{\ell\in\Theta},\ \forall k\in \mathbb{N}$
as the solution of \eqref{2299fordi} with $K^{\gamma}(0,\ell)=0,\ \ell\in\Theta$.

In Theorem \ref{2192fiboudedlemma}, the finite horizon BRL was set up based on the solution of \eqref{1785ARE}.
It is noticeable that \eqref{2295kyre} is the bridge to connect \eqref{2299fordi} with \eqref{1785ARE}.
The following lemma confirms the existence of the solution of \eqref{2299fordi}.
\begin{lemma}\label{2843gainkex}
Given $\gamma>0$, if $\Phi_{v}$ is internally stable and satisfies $\|\mathfrak{L}_{\infty}\|<\gamma$,
then \eqref{2299fordi} admits a solution $K^{\gamma}(k)\in\mathbb{H}_{\infty}^{n+}$, $k\in\mathbb{N}$
with the initial condition $K^{\gamma}(0,\ell)=0,\ \ell\in\Theta$
such that\\
(i)~$\Psi_{3}^{\gamma}(K^{\gamma}(k))(\ell)\!\leq\!-\eta_{0} I_{r\times r}$ $\mu\text{-}a.e.$, $\forall\eta_{0}\in(0,\gamma^{2}-\|\mathfrak{L}_{\infty}\|^{2})$; \\
(ii) $0\leq K^{\gamma}(k,\ell)\leq K^{\gamma}(k+1,\ell)\leq \zeta I_{n\times n}$ $\mu\text{-}a.e.$, where $\zeta>0$.
\end{lemma}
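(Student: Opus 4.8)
The plan is to transfer the finite-horizon BRL (Theorem~\ref{2192fiboudedlemma}) to the half line via the identification \eqref{2295kyre}, and to read off the monotonicity and the uniform bound of (ii) from the optimal-cost interpretation of the Riccati iterates together with Proposition~\ref{2985jaccle}. First I would note that, since $\Phi_v$ satisfies $\|\mathfrak{L}_\infty\|<\gamma$, Proposition~\ref{2504<0lTlinf} gives $\|\mathbb{L}_T\|\le\|\mathfrak{L}_\infty\|<\gamma$ for every $T\in\mathbb{N}$, so Theorem~\ref{2192fiboudedlemma} applies for each horizon $T$: the backward equation \eqref{1785ARE} has its (unique) solution $Y_T^\gamma(\cdot)\in\mathbb{H}_\infty^{n+}$ with $\Psi_3^\gamma(Y_T^\gamma(k+1))(\ell)\le-\eta_0 I_{r\times r}$ $\mu$-a.e.\ for every $k\in\overline{0,T}$ and every $\eta_0\in(0,\gamma^2-\|\mathbb{L}_T\|^2)$. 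Because $K_T^\gamma(k)=Y_T^\gamma(T+1-k)$ solves \eqref{2299fordi} from the zero initial value, it agrees with $K^\gamma(\cdot)$ on $\overline{0,T+1}$, so $K^\gamma(k)=Y_T^\gamma(T+1-k)\in\mathbb{H}_\infty^{n+}$ for every $T\ge k-1$; in particular $K^\gamma(k)=Y_k^\gamma(1)$ and, for $k\ge1$, $K^\gamma(k)=Y_{k-1}^\gamma(0)$, while $K^\gamma(0)=0$. Taking $T=k$ in the above inequality (the index $0$ lies in $\overline{0,k}$) and using $(0,\gamma^2-\|\mathfrak{L}_\infty\|^2)\subseteq(0,\gamma^2-\|\mathbb{L}_k\|^2)$ gives assertion (i) at once.

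For (ii) I would record the quadratic-form representation $x_0'Y_T^\gamma(0,\ell)x_0=\sup_{v\in l^2_\ell(\overline{0,T};\mathbb{R}^r)}J_\gamma(T,x_0,\ell,v)$, with the supremum attained at the closed-loop input $\hat v(k)=\mathcal{F}(Y_T^\gamma(k+1))(\vartheta(k))x(k)$. This is the conditional ($\vartheta_0=\ell$) version of the completing-the-squares identity already used in the sufficiency part of Theorem~\ref{2192fiboudedlemma}: conditional telescoping of $x(k)'Y_T^\gamma(k)(\vartheta(k))x(k)$ together with \eqref{1785ARE} gives $J_\gamma(T,x_0,\ell,v)=x_0'Y_T^\gamma(0,\ell)x_0+\sum_{k=0}^{T}\mathbb{E}\{[v(k)-\hat v(k)]'\Psi_3^\gamma(Y_T^\gamma(k+1))(\vartheta(k))[v(k)-\hat v(k)]\mid\vartheta_0=\ell\}$, and the sum is $\le0$ by (i), vanishing exactly when $v=\hat v$. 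Applying this with $T=k-1$ yields $x_0'K^\gamma(k,\ell)x_0=\sup_{v\in l^2_\ell(\overline{0,k-1};\mathbb{R}^r)}J_\gamma(k-1,x_0,\ell,v)$ for $k\ge1$ ($\mu$-a.e.\ $\ell$).

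Monotonicity is then a zero-padding comparison: extending $v\in l^2_\ell(\overline{0,k-1};\mathbb{R}^r)$ by $v(k)=0$ changes $J_\gamma(k-1,x_0,\ell,v)$ into $J_\gamma(k-1,x_0,\ell,v)+\mathbb{E}\{\|C(\vartheta(k))x(k)\|^2\mid\vartheta_0=\ell\}$, and the added term is nonnegative, so each individual cost, and hence the supremum over the longer horizon, is at least as large; as $x_0$ is arbitrary, $0=K^\gamma(0,\ell)\le K^\gamma(k,\ell)\le K^\gamma(k+1,\ell)$ $\mu$-a.e. For the uniform upper bound, I would take the optimal $\hat v$ realizing $x_0'K^\gamma(k,\ell)x_0=J_\gamma(k-1,x_0,\ell,\hat v)$, extend it by zero to $\tilde v\in l^2_\Theta(\mathbb{N};\mathbb{R}^r)$ (its conditional energy is finite because the horizon is finite and $A,B,C,D$ are bounded, Assumption~\ref{286ass}), and note $J_\gamma(k-1,x_0,\ell,\hat v)\le J_\gamma(\infty,x_0,\ell,\tilde v)$ since the discarded tail $\sum_{j\ge k}\mathbb{E}\{\|C(\vartheta(j))x(j)\|^2\mid\vartheta_0=\ell\}$ is nonnegative. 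Proposition~\ref{2985jaccle} then gives $J_\gamma(\infty,x_0,\ell,\tilde v)\le\zeta\|x_0\|^2$ $\mu$-a.e.\ with $\zeta$ independent of $k$ and $x_0$, so $K^\gamma(k,\ell)\le\zeta I_{n\times n}$ $\mu$-a.e.; intersecting the countably many exceptional $\mu$-null sets over $k\in\mathbb{N}$ completes (ii).

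The step I expect to demand the most care is the conditional optimal-cost identity for $Y_T^\gamma(0,\cdot)$ and its exploitation: one must carry the telescoping and the completing-the-squares through under the conditioning $\vartheta_0=\ell$ for $\mu$-almost every $\ell$, verify that the closed-loop input belongs to $l^2_\ell$ on the finite horizon and that its zero extension lies in $l^2_\Theta(\mathbb{N};\mathbb{R}^r)$ so that Proposition~\ref{2985jaccle} may legitimately be invoked, and keep the ``$\mu$-a.e.'' bookkeeping consistent throughout. By comparison, assertion (i) and the monotonicity are routine once \eqref{2295kyre} is used.
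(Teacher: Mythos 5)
Your proposal is correct and follows essentially the same route as the paper: Proposition~\ref{2504<0lTlinf} together with Theorem~\ref{2192fiboudedlemma} and the identification \eqref{2295kyre} give existence of $K^{\gamma}(k)\in\mathbb{H}_{\infty}^{n+}$ and assertion (i), and the uniform bound in (ii) is obtained exactly as in the paper by evaluating the finite-horizon cost at the closed-loop input, extending it by zero, and invoking Proposition~\ref{2985jaccle}. The only presentational difference is that you prove the monotonicity $K^{\gamma}(k)\leq K^{\gamma}(k+1)$ explicitly via the conditional sup-cost (completing-the-squares) representation and zero-padding, whereas the paper delegates this step to the analogue of Proposition~8.5 in \cite{BookDragan2010}; the underlying argument is the same.
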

\begin{proof}
If $\|\mathfrak{L}_{\infty}\|<\gamma$,
then by Proposition \ref{2504<0lTlinf}, $\|\mathbb{L}_{T}\|<\gamma$ for every $T\in\mathbb{N}$.
Let $T\in\mathbb{N}$ be arbitrary but fixed.
Applying Theorem \ref{2192fiboudedlemma},
$Y_{T}^{\gamma}(h)\in\mathbb{H}_{\infty}^{n+}$, $\forall h\in\overline{0,T}$ is well-defined as the solution of \eqref{1785ARE}.
Then it concludes through \eqref{2295kyre} that
\eqref{2299fordi} admits a solution $K^{\gamma}(k)\in\mathbb{H}_{\infty}^{n+}$, $k\in \mathbb{N}$
with the initial condition $K^{\gamma}(0,\ell)=0,\ \ell\in\Theta$,
and $(i)$ is proved.
Next, we will prove $(ii)$.
By a similar proof with Proposition 8.5 in \cite{BookDragan2010},
we have that $K^{\gamma}(k)\leq K^{\gamma}(k+1)$, $\forall k\in\mathbb{N}$.
Consider $v_{T}=\{v_{T}(\bar{k})\}_{\bar{k}\in\overline{0,T}}\in l^{2}_{\Theta}(\overline{0,T};\mathbb{R}^{r})$
defined by $v_{T}(\bar{k})=\mathcal{F}(Y_{T}^{\gamma}(\bar{k}+1))(\vartheta(\bar{k}))x_{T}(\bar{k})$,
where $x_{T}(\bar{k})$ is the solution of
$$\left\{
\begin{array}{lr}
\!x(\bar{k}\!+\!1)\!=\![A(\vartheta(\bar{k}))\!
+\!B(\vartheta(\bar{k}))\mathcal{F}(Y_{T}^{\gamma}(\bar{k}\!+\!1))
(\vartheta(\bar{k}))]x(\bar{k}),\\
x(0)=x_{0}.\\
\end{array}
\right.$$
And set $\hat{v}_{T}=\{\hat{v}_{T}(k)\}_{k\in\mathbb{N}}\in l^{2}_{\Theta}(\mathbb{N};\mathbb{R}^{r})$ by
$\hat{v}_{T}(k)=\left\{
\begin{aligned}
v_{T}(k),& \ \  \forall k\in \overline{0, T},  \\
0,\ \  &  \ otherwise.
\end{aligned} \right.$
So for any $x_{0} \in \mathbb{R}^{n}$,
there exists $\zeta>0$ satisfying $x_{0}' Y_{T}^{\gamma}(0, \ell) x_{0}
=J_{\gamma}(T,x_{0},\ell,v_{T})\leq J_{\gamma}\left(\infty,x_{0},\ell, \hat{v}_{T}\right) \leq \zeta\left\|x_{0}\right\|^{2},$
in which the last inequality is got from Proposition \ref{2985jaccle}.
Therefore, because of \eqref{2295kyre} and the arbitrary of $T\in\mathbb{N}$,
$K^{\gamma}(k+1,\ell)\leq \zeta I_{n\times n}$ holds for every $k \in\mathbb{N}$.
\end{proof}

The infinite horizon BRL relies on the stabilizing solution $K^{\gamma}
=\{K^{\gamma}(\ell)\}_{\ell\in\Theta}
\in \mathbb{SH}_{\infty}^{n}$ satisfying the following $\Theta$-coupled ARE:
\begin{equation}\label{2970kakrq}
K^{\gamma}(\ell)\!=\!\Psi_{1}(K^{\gamma})(\ell)
\!-\!\Psi_{2}(K^{\gamma})(\ell)
\Psi_{3}^{\gamma}(K^{\gamma})(\ell)^{\!-1}
\Psi_{2}(K^{\gamma})(\ell)'.
\end{equation}
\begin{definition}\label{destasolution}
We call that $K^{\gamma}\in\mathbb{SH}_{\infty}^{n}$ is a stabilizing solution of ARE \eqref{2970kakrq}
if $\mathcal{F}(K^{\gamma})
\in \mathbb{H}_{\infty}^{r\times n}$
exponentially mean-square stabilizes $\Phi_{v}$ with conditioning,
i.e., the closed-loop system $x(k+1)=[A(\vartheta(k))+B(\vartheta(k))$ $\cdot\mathcal{F}(K^{\gamma})(\vartheta(k))]x(k)$
with the initial conditions $(x_{0},\vartheta_{0})$ is EMSS-C.
\end{definition}

We are now in a position to state the infinite horizon BRL.

\begin{theorem}\label{infbrl3404}
Given $\gamma>0$, $\Phi_{v}$ is internally stable and satisfies $\|\mathfrak{L}_{\infty}\|<\gamma$
iff \eqref{2970kakrq} admits a stabilizing solution $K^{\gamma}\in\mathbb{H}_{\infty}^{n+}$ meeting the sign condition
$\Psi_{3}^{\gamma}(K^{\gamma})(\ell)\leq-\eta_{0} I_{r\times r}\ \mu\text{-}a.e.,\ \forall\eta_{0}\in(0,\gamma^{2}-\|\mathfrak{L}_{\infty}\|^{2}).$
\end{theorem}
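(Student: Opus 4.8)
The plan is to establish the two implications of the equivalence separately, leaning on the finite-horizon theory of Subsection~\ref{2028finhcbrl} together with the monotone-convergence and cost estimates already in hand (Lemma~\ref{2843gainkex}, Propositions~\ref{2504<0lTlinf}, \ref{2985jaccle}, \ref{pro2394LJJJ}, Corollary~\ref{co2363lims}, Proposition~\ref{proemsscintst1447}), and keeping internal stability of $\Phi_v$ as the standing hypothesis of this subsection.

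\textbf{Necessity.} Assume $\Phi_v$ is internally stable and $\|\mathfrak{L}_\infty\|<\gamma$. By Proposition~\ref{2504<0lTlinf}, $\|\mathbb{L}_T\|\le\|\mathfrak{L}_\infty\|<\gamma$ for all $T\in\mathbb{N}$, so Lemma~\ref{2843gainkex} yields a solution $K^\gamma(k)\in\mathbb{H}_\infty^{n+}$, $k\in\mathbb{N}$, of \eqref{2299fordi} with $K^\gamma(0,\ell)=0$ that is nondecreasing in $k$ and satisfies $0\le K^\gamma(k,\ell)\le\zeta I_{n\times n}$ and, for every $\eta_0\in(0,\gamma^2-\|\mathfrak{L}_\infty\|^2)$, $\Psi_3^\gamma(K^\gamma(k))(\ell)\le-\eta_0 I_{r\times r}$ $\mu\text{-}a.e.$ Fixing representatives, for $\mu$-almost every $\ell$ the matrices $K^\gamma(k,\ell)$ form a nondecreasing bounded sequence in $\mathbb{S}^n$, hence converge to some $K^\gamma(\ell)$ with $0\le K^\gamma(\ell)\le\zeta I_{n\times n}$; as a pointwise limit of measurable maps, $K^\gamma\in\mathbb{SH}_\infty^n$, and in fact $K^\gamma\in\mathbb{H}_\infty^{n+}$. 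To pass to the limit in \eqref{2299fordi} I would invoke dominated convergence for $\mathcal{E}$: since $0\le K^\gamma(k,s)\le\zeta I_{n\times n}$ and $\int_\Theta g(s|\ell)\mu(ds)=\mathcal{G}(\Theta|\ell)=1$, the map $s\mapsto\zeta\,g(s|\ell)I_{n\times n}$ dominates, so $\mathcal{E}(K^\gamma(k))(\ell)\to\mathcal{E}(K^\gamma)(\ell)$ $\mu\text{-}a.e.$, whence $\Psi_1(K^\gamma(k))\to\Psi_1(K^\gamma)$, $\Psi_2(K^\gamma(k))\to\Psi_2(K^\gamma)$ and $\Psi_3^\gamma(K^\gamma(k))\to\Psi_3^\gamma(K^\gamma)$ $\mu\text{-}a.e.$; the bound $\Psi_3^\gamma(K^\gamma(k))\le-\eta_0 I_{r\times r}$ survives the limit and makes the inverses uniformly bounded, so $\Psi_3^\gamma(K^\gamma(k))^{-1}\to\Psi_3^\gamma(K^\gamma)^{-1}$ as well. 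Letting $k\to\infty$ in \eqref{2299fordi} then shows $K^\gamma$ solves \eqref{2970kakrq}, meets the sign condition, and (by the sign condition) has $\mathcal{F}(K^\gamma)\in\mathbb{H}_\infty^{r\times n}$. That $K^\gamma$ is \emph{stabilizing} in the sense of Definition~\ref{destasolution} is the remaining point, treated below.

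\textbf{Sufficiency.} Suppose \eqref{2970kakrq} has a stabilizing solution $K^\gamma\in\mathbb{H}_\infty^{n+}$ with $\Psi_3^\gamma(K^\gamma)(\ell)\le-\eta_0 I_{r\times r}$ $\mu\text{-}a.e.$ for some $\eta_0>0$; internal stability being the standing hypothesis, $\mathfrak{L}_\infty$ is well defined by Proposition~\ref{proemsscintst1447}. Put $F=\mathcal{F}(K^\gamma)$, and for $v\in l^2_\Theta(\mathbb{N};\mathbb{R}^r)$ let $x(\cdot)=\phi_x(\cdot,0,\vartheta_0,v)$, $\hat v(k)=F(\vartheta(k))x(k)$ and $w(k)=v(k)-\hat v(k)$. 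Completing the square along $x$ by means of \eqref{2970kakrq} (exactly as in the sufficiency part of Theorem~\ref{2192fiboudedlemma}, using $C'D=0$), summing, and then letting $N\to\infty$ while using that $\mathbb{E}\{x(N)'K^\gamma(\vartheta(N))x(N)\,|\,\vartheta_0=\ell\}\to0$ by Corollary~\ref{co2363lims}, gives $J_\gamma(\infty,0,\ell,v)=\sum_{k=0}^{\infty}\mathbb{E}\{w(k)'\Psi_3^\gamma(K^\gamma)(\vartheta(k))w(k)\,|\,\vartheta_0=\ell\}\le-\eta_0\sum_{k=0}^{\infty}\mathbb{E}\{\|w(k)\|^2\,|\,\vartheta_0=\ell\}$ $\mu\text{-}a.e.$ Since $x(k+1)=[A(\vartheta(k))+B(\vartheta(k))F(\vartheta(k))]x(k)+B(\vartheta(k))w(k)$ with $x(0)=0$ and the closed loop is EMSS-C, Proposition~\ref{proemsscintst1447} applied to it gives $\|x\|_{l^2_\ell(\mathbb{N};\mathbb{R}^n)}\le c\|w\|_{l^2_\ell(\mathbb{N};\mathbb{R}^r)}$ with $c$ independent of $\ell$ (the EMSS-C constants being $\ell$-independent), so $\|v\|_{l^2_\ell}\le(1+c\|F\|_\infty)\|w\|_{l^2_\ell}$ and hence $J_\gamma(\infty,0,\ell,v)\le-\eta_0(1+c\|F\|_\infty)^{-2}\|v\|^2_{l^2_\ell(\mathbb{N};\mathbb{R}^r)}<0$ $\mu\text{-}a.e.$ for $v\ne0$. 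Since $J_\gamma(\infty,0,\ell,v)=\|\mathfrak{L}_\infty(v)\|^2_{l^2_\ell}-\gamma^2\|v\|^2_{l^2_\ell}$, this uniform bound together with \eqref{2359Linfnorm} and Proposition~\ref{pro2394LJJJ} yields $\|\mathfrak{L}_\infty\|^2\le\gamma^2-\eta_0(1+c\|F\|_\infty)^{-2}<\gamma^2$.

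\textbf{Main obstacle.} The crux is the stabilizing property in the necessity part, i.e.\ showing that $(A+B\mathcal{F}(K^\gamma);\mathcal{G})$ is EMSS-C. The natural relation extracted from \eqref{2970kakrq},
\begin{equation*}
K^\gamma(\ell)-\mathcal{T}_{A+BF}(K^\gamma)(\ell)=[C(\ell)+D(\ell)F(\ell)]'[C(\ell)+D(\ell)F(\ell)]-\gamma^2 F(\ell)'F(\ell),
\end{equation*}
has a sign-indefinite right-hand side, so $K^\gamma$ is only a Lyapunov certificate for $\mathcal{T}_A$ (indeed $K^\gamma-\mathcal{T}_A(K^\gamma)=C'C+F'(-\Psi_3^\gamma(K^\gamma))F\ge0$), \emph{not} for $\mathcal{T}_{A+BF}$. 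I would resolve this by a monotone-convergence argument in the spirit of \cite{BookDragan2010}: introduce an auxiliary level $\tilde\gamma\in(\|\mathfrak{L}_\infty\|,\gamma)$, build $K^{\tilde\gamma}$ by the same limit procedure, and combine the monotonicity $0\le K^\gamma\le K^{\tilde\gamma}$ with the supersolution property $\mathcal{R}_\gamma(K^{\tilde\gamma})\le K^{\tilde\gamma}$ (which follows from the pointwise inequality $\mathcal{R}_\gamma\le\mathcal{R}_{\tilde\gamma}$ between the Riccati operators, $\mathcal{R}_\gamma(X)$ denoting the right-hand side of \eqref{2970kakrq} with $K^\gamma$ replaced by $X$) and a difference-of-Riccati estimate $\mathcal{R}_\gamma(X)-\mathcal{R}_\gamma(Y)\ge\mathcal{T}_{A+B\mathcal{F}(Y)}(X-Y)$ valid for $0\le Y\le X$ with $\Psi_3^\gamma$ negative definite at both, so as to produce a uniformly positive definite solution of a Lyapunov-type inequality for $\mathcal{T}_{A+BF}$ and then conclude via Theorem~\ref{906theorem31234}; this step also uses the quantitative positivity of the gap $\mathcal{R}_{\tilde\gamma}(K^{\tilde\gamma})-\mathcal{R}_\gamma(K^{\tilde\gamma})$ and, crucially, the standing internal stability of $\Phi_v$. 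Throughout, the measure-theoretic bookkeeping — measurability of the pointwise limits, reading all (in)equalities only $\mu\text{-}a.e.$, and $\ell$-uniformity of the constants — must be carried out with the care illustrated in the finite-horizon proofs.
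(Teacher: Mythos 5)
Two substantive points in your proposal do not survive comparison with what the theorem actually requires. First, in the sufficiency direction you declare ``internal stability being the standing hypothesis'' and immediately use Proposition \ref{proemsscintst1447}. But internal stability of $(\mathbf{A};\mathcal{G})$ is part of the left-hand side of the equivalence, so the implication from the existence of the stabilizing solution must \emph{prove} it, not assume it; what Definition \ref{destasolution} gives you is EMSS-C of the closed loop $A+B\mathcal{F}(K^{\gamma})$, which is not the same thing. The paper spends a nontrivial part of its sufficiency proof on exactly this step: it passes to $\hat{K}^{\gamma}=-K^{\gamma}$, invokes the comparison results of \cite{Ungureanu2013opcam} to produce $Y\in\mathbb{SH}_{\infty}^{n}$ with $-Y(\ell)\geq\rho I_{n\times n}$ and $-Y(\ell)-\mathcal{T}_{A}(-Y)(\ell)\geq\rho I_{n\times n}$, and then applies $(iv)$ of Theorem \ref{906theorem31234}. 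Without some argument of this kind your sufficiency proof only establishes the weaker statement ``stabilizing solution $+$ internal stability $\Rightarrow\|\mathfrak{L}_{\infty}\|<\gamma$.''

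Second, your resolution of the ``main obstacle'' (the stabilizing property in the necessity direction) would fail as sketched. The quantity you rely on, $\mathcal{R}_{\tilde\gamma}(K^{\tilde\gamma})-\mathcal{R}_{\gamma}(K^{\tilde\gamma})=-\Psi_{2}(K^{\tilde\gamma})\bigl[\Psi_{3}^{\tilde\gamma}(K^{\tilde\gamma})^{-1}-\Psi_{3}^{\gamma}(K^{\tilde\gamma})^{-1}\bigr]\Psi_{2}(K^{\tilde\gamma})'$, is sandwiched by $\Psi_{2}$ on both sides and therefore has rank at most $r$; it is only positive semidefinite (identically zero if, say, $B=0$), so it cannot supply the uniformly positive right-hand side, nor the uniform positivity of $K^{\tilde\gamma}-K^{\gamma}$, that $(iv)$ of Theorem \ref{906theorem31234} demands for $\mathcal{T}_{A+B\mathcal{F}(K^{\gamma})}$. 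This is precisely why the paper perturbs the \emph{output} instead: with $C_{\tau}=(C',\ \tau I_{n\times n})'$ one gets $\|\mathfrak{L}_{\infty}^{\tau}\|<\gamma$ for small $\tau$, the perturbed iteration carries an additive full-rank term $\tau^{2}I_{n\times n}$, its limit $K^{\gamma,\tau}$ satisfies the perturbed ARE, and subtracting the two AREs yields $K^{\gamma,\tau}-K^{\gamma}=\mathcal{T}_{A+B\mathcal{F}(K^{\gamma})}(K^{\gamma,\tau}-K^{\gamma})+M$ with $M\geq\tau^{2}I_{n\times n}$ and hence $K^{\gamma,\tau}-K^{\gamma}\geq\tau^{2}I_{n\times n}$, which is exactly the Lyapunov certificate needed. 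The remainder of your necessity argument (monotone bounded convergence of $K^{\gamma}(k)$, dominated convergence for $\mathcal{E}$, passage of the sign condition to the limit) and your completion-of-squares estimate for $\|\mathfrak{L}_{\infty}\|<\gamma$ do match the paper, but the two gaps above are the parts of the theorem where the real work lies.
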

\begin{proof}
Sufficiency.
If \eqref{2970kakrq} admits a stabilizing solution $K^{\gamma}\in\mathbb{H}_{\infty}^{n+}$, then $\hat{K}^{\gamma}=-K^{\gamma}$ exists
as a stabilizing solution of
$$\hat{K}^{\gamma}(\ell)=\hat{\Psi}_{1}(\hat{K}^{\gamma})(\ell)
-\Psi_{2}(\hat{K}^{\gamma}) \hat{\Psi}_{3}(\hat{K}^{\gamma})(\ell)^{-1} \Psi_{2}(\hat{K}^{\gamma})(\ell)'$$
and satisfies
$\hat{\Psi}_{3}(\hat{K}^{\gamma})(\ell)\geq \eta_{0} I_{r\times r}$ $\mu\text{-}a.e.$, $\forall \eta_{0}\in(0,\gamma^{2}
-\|\mathfrak{L}_{\infty}\|^{2})$,
where $\hat{\Psi}_{1}(\cdot)(\ell):=\mathcal{T}_{A}(\cdot)(\ell)-C(\ell)'C(\ell)$
and $\hat{\Psi}_{3}(\cdot)(\ell):=\mathcal{T}_{B}(\cdot)(\ell)-D(\ell)'D(\ell)+\gamma^{2} I_{r\times r}$.
Following a similar argument as the proof of Theorem 13 in \cite{Ungureanu2013opcam},
one can yield that there are $\rho>0$, $Y \in \mathbb{SH}_{\infty}^{n}$ such that
$$\left[\begin{array}{cc}
\hat{\Psi}_{1}(Y)(\ell)-Y(\ell) & \Psi_{2}(Y)(\ell) \\
\Psi_{2}(Y)'(\ell) & \hat{\Psi}_{3}(Y)(\ell)
\end{array}\right] \geq \rho I_{(n+r)\times (n+r)},\ \ell\in\Theta.$$
And then, $Y(\ell) \leq \hat{K}^{\gamma}(\ell) \leq 0$, $\ell\in\Theta$ can be shown by a similar proof with Theorem 11 in \cite{Ungureanu2013opcam}.
Moreover, $\hat{\Psi}_{1}(Y)(\ell)-Y(\ell) \geq \rho I_{n\times n}$,
which leads to $-Y(\ell)-\mathcal{T}_{A}(-Y)(\ell) \geq \rho I_{n\times n}$ with $-Y(\ell) \geq \rho I_{n\times n}$.
Hence, by $(iv)$ of Theorem \ref{906theorem31234}, $\Phi_{v}$ is internally stable.
Next, we will prove $\left\|\mathfrak{L}_{\infty}\right\|<\gamma$.
Consider $\Phi_{v}$ with $x_{0}=0$ and let $T\in\mathbb{N}$ be arbitrary but fixed.
By completing the square associated with \eqref{2970kakrq},
\eqref{1811costf1} can be rewritten as
$J_{\gamma}(T,0,\ell,v)=-\mathbb{E}\{\phi_{x}(T\!+\!1,0,\ell,v)' K^{\gamma}(\vartheta(T\!+\!1))
\phi_{x}(T\!+\!1,0,\ell,v)|\vartheta_{0}\!=\!\ell\}$
$+\sum_{k=0}^{T} \mathbb{E}\{[v(k)\!-\hat{v}(k)]'\Psi_{3}^{\gamma}\!(K^{\gamma})(\vartheta(k))$
$\cdot[v(k)\!-\hat{v}(k)]|\vartheta_{0}\!=\ell\}\leq 0,\forall v\in l^{2}_{\Theta}(\overline{0,T};\mathbb{R}^{r}), \ \ell\in\Theta$, where $\hat{v}(k)=\!\mathcal{F}(K^{\gamma})(\vartheta(k))\phi_{x}(k,\!0,\!\ell,\!v)$.
Taking $T \rightarrow \infty$ in $J_{\gamma}\left(T,0,\ell,v\right)$ and applying Corollary \ref{co2363lims},
we have that $J_{\gamma}\left(\infty,0,\ell,v\right)=0$ iff $v(k)=\hat{v}(k)$,
which contradicts with $v\neq0$.
So, $J_{\gamma}(\infty,0,\ell,v)\!\leq\!-\eta_{0}\|v-\hat{v}\|^{2}
_{l^{2}_{\ell}(\mathbb{N};\mathbb{R}^{r})}<0\ \mu\text{-}a.e.$,
and $\left\|\mathfrak{L}_{\infty}\right\|<\gamma$ follows from Proposition \ref{pro2394LJJJ}.\\
Necessity.
In view of Lemma \ref{2843gainkex}, if $\Phi_{v}$ is internally stable and $\left\|\mathfrak{L}_{\infty}\right\|<\gamma$,
then \eqref{2299fordi} admits a solution $K^{\gamma}(k)\in\mathbb{H}_{\infty}^{n+}$
satisfying $\Psi_{3}^{\gamma}(K^{\gamma}(k))(\ell)\!\leq\!-\eta_{0} I_{r\times r}$ $\mu\text{-}a.e.$, $\forall\eta_{0}\in(0,\gamma^{2}-\|\mathfrak{L}_{\infty}\|^{2})$, and there exists $\zeta>0$ such that
$0\leq K^{\gamma}(k,\ell)\leq K^{\gamma}(k+1,\ell)\leq \zeta I_{n\times n}$ $\mu\text{-}a.e.$, $\forall k\in\mathbb{N}$.
Therefore, by the standard monotonicity result for the bounded positive semidefinite matrices,
there exists $K^{\gamma}=\{K^{\gamma}(\ell)\}_{\ell\in\Theta}\in\mathbb{H}_{\infty}^{n+}$ satisfying $\lim_{k\rightarrow\infty}{K}^{\gamma}(k,\ell)=K^{\gamma}(\ell)\leq \zeta I_{n\times n}$.
According to the dominated convergence theorem,
$\lim_{k\rightarrow\infty}\mathcal{E}({K}^{\gamma}(k))(\ell)=\int_{\Theta}g(s|\ell)
\lim_{k\rightarrow\infty}{K}^{\gamma}(k,s)\mu(ds) $ $=\mathcal{E}(K^{\gamma})(\ell).$
This concludes that $K^{\gamma}$ solves \eqref{2970kakrq} after taking the limit in \eqref{2299fordi} as $k\rightarrow\infty$.
Moreover,
$\Psi_{3}^{\gamma}(K^{\gamma})(\ell)\leq-\eta_{0} I_{r\times r}$ $\mu\text{-}a.e.$,  $\forall \eta_{0}\in(0,\gamma^{2}-\|\mathfrak{L}_{\infty}\|^{2})$.
Next, we will prove that $K^{\gamma}$ is a stabilizing solution of \eqref{2970kakrq}.
For any $v \in l^{2}_{\Theta}(\mathbb{N};\mathbb{R}^{r})$ and $\tau>0$,
introduce the perturbation operator
$\mathfrak{L}_{\infty}^{\tau}:l^{2}_{\Theta}(\mathbb{N};\mathbb{R}^{r})\rightarrow l^{2}_{\Theta}(\mathbb{N};\mathbb{R}^{m})$
as $\mathfrak{L}_{\infty}^{\tau}(v)(k,\ell)=C_{\tau}(\vartheta(k))x(k,0,\ell,v)+D_{\tau}(\vartheta(k))v(k),$
where $C_{\tau}(\cdot)=( C(\cdot)',\
                   \tau  I_{n\times n})'$ and
                   $ D_{\tau}(\cdot)=(D(\cdot)',\
                   0 )'$.
In consideration of $\left\|\mathfrak{L}_{\infty}\right\|<\gamma$,
then for a sufficiently small $\tau>0$,
$\left\|\mathfrak{L}_{\infty}^{\tau}\right\|<\gamma$.
By Lemma \ref{2843gainkex}, there exists $K^{\gamma,\tau}(k)\in\mathbb{H}_{\infty}^{n+}$, $k\in\mathbb{N}$ satisfying
\begin{equation*}
\left\{\begin{array}{l}
K^{\gamma,\tau}(k+1,\ell)=\Psi_{1}(K^{\gamma,\tau}(k))(\ell)
+ \tau^{2}  I_{n\times n}\\
\ \ -\Psi_{2}(K^{\gamma,\tau}(k))(\ell)
\Psi_{3}^{\gamma}(K^{\gamma,\tau}(k))(\ell)^{-1}
\Psi_{2}(K^{\gamma,\tau}(k))(\ell)',\\
\Psi_{3}^{\gamma}(K^{\gamma,\tau}(k))(\ell)\!
\leq-\eta_{0} I_{r\times r}, \forall\eta_{0}\in(0,\gamma^{2}\!-\!\|\mathfrak{L}^{\tau}_{\infty}\|^{2})
\end{array}\right.
\end{equation*}
with the initial condition $K^{\gamma,\tau}(0,\ell)=0$, $\ell\in\Theta$.
And for some $\zeta_{\tau}>0$,
$0\leq K^{\gamma}(k,\ell)\leq K^{\gamma,\tau}(k,\ell)\leq K^{\gamma,\tau}(k+1,\ell)\leq \zeta_{\tau} I_{n\times n}$
holds for every $k\in\mathbb{N}$, $\ell\in\Theta$,
where $K^{\gamma}(k)$ is the solution of \eqref{2299fordi}.
Therefore, $\lim_{k\rightarrow\infty}K^{\gamma,\tau}(k)
=K^{\gamma,\tau}\in\mathbb{H}_{\infty}^{n+}$ exists
and solves the following ARE:
\begin{equation}\label{2037kgta}
\begin{split}
K^{\gamma,\tau}(\ell)=&-\!\Psi_{2}(K^{\gamma,\tau})(\ell)
\Psi_{3}^{\gamma}(K^{\gamma,\tau})(\ell)^{-1}\\
\cdot\Psi_{2}&(K^{\gamma,\tau})(\ell)'
+\Psi_{1}(K^{\gamma,\tau})(\ell)\!+\tau^{2}  I_{n\times n}
\end{split}
\end{equation}
with $\Psi_{3}^{\gamma}(K^{\gamma,\tau})(\ell)\leq-\eta_{0} I_{r\times r}$
for any $\eta_{0}\in(0,\gamma^{2}-\|\mathfrak{L}^{\tau}_{\infty}\|^{2}).$
Moreover, $K^{\gamma}\leq K^{\gamma,\tau}$.
Subtracting \eqref{2970kakrq} from \eqref{2037kgta}, it follows that
$K^{\gamma,\tau}(\ell)-K^{\gamma}(\ell)
=[A(\ell)+B(\ell)\mathcal{F}(K^{\gamma})(\ell)]'\mathcal{E}(K^{\gamma,\tau}\!-\!K^{\gamma})(\ell)
[A(\ell)+B(\ell)\mathcal{F}(K^{\gamma})(\ell)]+M(\ell),$
where $M(\ell)\!=\!\tau^{2} I_{n\times n}-[\mathcal{F}(K^{\gamma})(\ell)-\mathcal{F}(K^{\gamma,\tau})(\ell)]'$
$\cdot\Psi_{3}^{\gamma}(K^{\gamma,\tau})(\ell)[\mathcal{F}(K^{\gamma})(\ell)-\mathcal{F}(K^{\gamma,\tau})(\ell)]$ $\geq\tau^{2} I_{n\times n}$.
This leads to $K^{\gamma,\tau}(\ell)-K^{\gamma}(\ell)\geq \tau^{2} I_{n\times n}$.
By Theorem \ref{906theorem31234},
$F(K^{\gamma})$ exponentially stabilizes $\Phi_{v}$,
i.e., $K^{\gamma}$ is a stabilizing solution of \eqref{2970kakrq}.
\end{proof}
\begin{remark}
For finite MJLSs, with the intention of deriving the infinite horizon BRL,
it is typical in previous literature to assume the weak controllability of the system (see, \cite{Seiler2003,Aberkane2013})
or that the transition probability matrix of the Markov chain is nondegenerate (see, \cite{Collin2016,Marcos2018auto}).
Note that the latter is less restrictive than the former.
In this paper, we adopt Assumption \ref{1977assgtl0},
which corresponds to the transition probability matrix being nondegenerate in the finite scenario.
\end{remark}

In what follows, a backward iteration will be presented for solving \eqref{2970kakrq},
whenever \eqref{2970kakrq} admits a stabilizing solution satisfying the sign condition in Theorem \ref{infbrl3404}.
To this end, for every $T\in\mathbb{N}$, we consider the backward difference equation \eqref{1785ARE}.
Applying Theorem \ref{infbrl3404}, we confirm that $(\mathbf{A};\mathcal{G})$ is EMSS-C and $\Phi_{v}$ satisfies $\|\mathfrak{L}_{\infty}\|<\gamma$.
Following a similar argument as the proof of Lemma \ref{2843gainkex},
it can be concluded that \eqref{1785ARE} admits a solution $Y_{T}^{\gamma}(k),\ k\in{\overline{0, T}}$ satisfying the sign condition $\Psi_{3}^{\gamma}(Y_{T}^{\gamma}(k))(\ell)\leq-\eta_{0} I_{r\times r}$, $\ell\in\Theta$.
Moreover, $0\leq Y_{T}^{\gamma}(k,\ell)\leq Y_{T+1}^{\gamma}(k,\ell)\leq \zeta I_{n\times n}$ $\mu\text{-}a.e.$,
where $Y_{T+1}^{\gamma}(k),\ k\in \overline{0,T+1}$ is the solution of \eqref{1785ARE} with $Y(T+2,\ell)=0$ and $\zeta>0$.
Therefore, $\{Y_{T}^{\gamma}(0)\}_{T\in \mathbb{N}}$ is a monotone increasing sequence which is bounded above.
It then follows from the standard monotonicity result concerning the bounded positive semidefinite matrices
that $\lim_{T\rightarrow+\infty}Y_{T}^{\gamma}(0)=\lim_{T\rightarrow+\infty}Y_{T+1}^{\gamma}(0)=Y^{\gamma}$.
Take into account the difference equation  $Y_{T+1}^{\gamma}(0)=\Psi_{1}(Y_{T+1}^{\gamma}(1))
-\Psi_{2}(Y_{T+1}^{\gamma}(1))\Psi_{3}^{\gamma}(Y_{T+1}^{\gamma}(1))^{-1}\Psi_{2}(Y_{T+1}^{\gamma}(1))'$.
Since $Y^{\gamma}_{T+1}(1)=Y^{\gamma}_{T}(0),\ \forall T\in\mathbb{N}$,
taking $T\rightarrow\infty$,
we know that $Y^{\gamma}$ satisfies \eqref{2970kakrq}.
From what has been discussed above,
we propose a backward iteration summarized as Algorithm \ref{alg1}
for getting a numerical solution of a positive semidefinite solution to \eqref{2970kakrq}.
\begin{algorithm}
\caption{An algorithm for solving \eqref{2970kakrq}}\label{alg1}
\hspace*{0.02in} {\bf Input:}
the computational accuracy $\epsilon>0$ and a prescribed level $\gamma>0$ \\
\hspace*{0.02in} {\bf Output:}
a numerical solution of \eqref{2970kakrq}
\begin{algorithmic}[1]
\State For any $\ell\in\Theta$ set $\mathfrak{T}=1$ and $Z(\ell)=0$.
\State Set $\mathfrak{T}_{Y}=\mathfrak{T}$, $\mathfrak{T}_{X}=\mathfrak{T}+1$, and $Y(\ell)=Z(\ell)$.
\State Compute $Y(\ell)=\Psi_{1}(Y)(\ell)-\Psi_{2}(Y)(\ell)\Psi_{3}^{\gamma}(Y)(\ell)^{-1}\Psi_{2}(Y)(\ell)'.$
\State $\mathfrak{T}_{Y}=\mathfrak{T}_{Y}-1.$
\State If $\mathfrak{T}_{Y}>0$ then go to Step 3. Else continue to Step 6.
\State Set $X(\ell)=Z(\ell)$.
\State Compute $X(\ell)=\Psi_{1}(X)(\ell)-\Psi_{2}(X)(\ell)\Psi_{3}^{\gamma}(X)(\ell)^{-1}\Psi_{2}(X)(\ell)'.$
\State $\mathfrak{T}_{X}=\mathfrak{T}_{X}-1.$
\State If $\mathfrak{T}_{X}>0$ then go to Step 7. Else continue to Step 10.
\State If $\|Y(\ell)-X(\ell)\|<\epsilon$ for any $\ell\in\Theta$ then return $X(\ell)$ and halt.
Else set $\mathfrak{T}=\mathfrak{T}+1$ and go to step 2.
\end{algorithmic}
\end{algorithm}
\begin{remark}
It should be noted that Algorithm \ref{alg1} is efficient to implement,
but it has a limitation since it only solves the numerical solution of a positive semidefinite solution
to \eqref{2970kakrq} rather than the stabilizing solution.
Regarding to finite MJLSs,
one can refer to \cite{2022Game} for an iterative deterministic algorithm of the stabilizing solutions to a class of stochastic Riccati equations,
in which the original problem is transformed into solving a sequence of uncoupled deterministic Riccati equations.
However, for the ARE \eqref{2970kakrq} coupled via the integral, it is significantly challenging to obtain the auxiliary uncoupled Riccati equations. Additionally, we recognize that the existence conditions for the stabilizing solution of the ARE is an issue of considerable importance.
\cite{2020AberkaneJDEA} has made a great progress for finite MJLSs.
In future research, we hope to do some in-depth studies on the conditions for the existence of the stabilizing solution of the ARE
and the numerical algorithm for the case where the Markov chain takes values in $\Theta$.
We thank the anonymous reviewer for providing valuable research ideas for us.
\end{remark}

\section{Examples}\label{Example}

In this section, several examples are supplied to illustrate the feasibility of our work.
\begin{example}\label{ex1}
The results of analysis and synthesis of discrete-time MJLSs,
where the Markov chain takes values in $\Theta$,
potentially provide significant value for some models constructed based on actual applications (see \cite{Meynsp2009}).
For example, it has been applied to networked control systems with the continuous-valued random delays (see, \cite{Masashi2018}).
In this example, we consider the solar thermal receiver model proposed in \cite{Sworder1983},
which was also discussed in Chapter 8 of \cite{BookCosta2005}.
In these works, the influence of atmospheric conditions on the system dynamics is usually modeled by a two-mode Markov chain,
representing two atmospheric conditions: 1) sunny and 2) cloudy.
However, even under the sunny or cloudy atmospheric condition,
the instantaneous solar radiation still fluctuates and then heavily influences the system dynamics.
Therefore, it is reasonable to establish a more accurate model by using a Markov chain that takes values in $\Theta$
to describe the effect of varying solar radiation intensities on the system dynamics.
In \cite{Costa2015}, the system dynamics is modeled by $x(k+1)=a(\vartheta(k))x(k),\ k\in\mathbb{N}$ and the Markov chain $\{\vartheta(k)\}_{k\in\mathbb{N}}$ is on the Borel space $(\Theta, \mathcal{B}(\Theta))$,
where $\Theta=\Theta_{1}\bigcup\Theta_{2}$
with $\Theta_{i}=\{i\times [0,1]\}$, $i\in\{1,2\}$.
For $\vartheta(k)=(i,t)$ with $t\in[0,1]$, $i=1$ $(i=2)$ represents the sunny (cloudy) atmospheric condition,
while $t$ represents the effects of instantaneous insolation on the system parameters under the sunny (cloudy) atmospheric condition.
Define the measure $\mu$ on $\mathcal{B}(\Theta)$ as $\mu(\{i\}\times M)=\mu_{L}(M)$,
where $\mu_{L}(\cdot)$ is the Lebesgue measure, $M$ is a Borel set on $[0,1]$ and $i\in\{1,2\}$.
When $\vartheta(k)=(i,t)$ and $\vartheta(k+1)=(j,S)$ with $S$ being uniformly distributed over the interval $[0,1]$,
let $j=1$ with probability $P_{i1}$ and $j=2$ with  probability $P_{i2}$ be the transition probability function.
Consider the parameters given in \cite{Costa2015}: $a_{11}=0.9,\ a_{12}=0.7,\ a_{21}=0.95,\ a_{22}=1.15$.
And for $a=\{a(\ell)\}_{\ell\in\Theta}$, when $\ell=(i,t)$,
set $a(\ell)=a_{i1}+t(a_{i2}-a_{i1})$, $i\in\{1,2\}$, $t\in[0,1]$.
Here, take $P_{11}=0.9767,\ P_{22}=0.7611$.
Since $U(\ell)=[a(\ell)]^{2}$ and $\xi=0.0047$ solves $U(\ell)-\mathcal{T}_{a}(U)(\ell)\geq \xi $ $\mu\text{-}a.e.$,
from (iv) of Theorem \ref{906theorem31234}, one can infer that the system is EMSS-C.
\end{example}
\begin{example}\label{2558ex1}
First, we consider a two-dimension MJLS $(\mathbf{A},P)$: $x(k+1)=A(\vartheta(k))x(k),\ k\in\mathbb{N}$
and the Markov chain $\{\vartheta(k)\}_{k\in\mathbb{N}}$ takes values in a finite set $\{1,2\}$.
Define the measure on $\{1,2\}$ as the counting measure,
and give the probability transition matrix of the Markov chain by $P=(p_{ij})_{2\times 2}$ with $p_{11}=0.15$ and $p_{22}=0.1$.
The system coefficients are taken as $A(\vartheta(k))=A_{11}$ for $\vartheta(k)=1$ and $A(\vartheta(k))=A_{21}$ for $\vartheta(k)=2$,
where
$A_{11}=
\left(
   \begin{array}{cc}
      2 & -1 \\
          0 & 0 \\
      \end{array}
\right),
A_{21}=
\left(
   \begin{array}{cc}
      0 & 1 \\
     0 & 2\\
      \end{array}
\right).$
It is not difficult to know that there are instable individual modes for the finite MJLS $(\mathbf{A},P)$.
However, we can find that $Y=\{Y(i)\}_{i\in\overline{1,2}}$ with $Y(i)=Y_{i}\in\mathbb{S}^{2+*}$ solves the Lyapunov-type inequality:
$-Y(i)+A(i)'[\sum_{j=1}^{2} p_{ij}Y(j)]A(i)+I_{2\times 2}\leq 0, \ \forall i\in\overline{1,2},$
where
$Y_{1}=10^{3}\times\left(
                \begin{array}{cc}
                    1.3438  & -0.6177\\
                   -0.6177  &  0.4501\\
                \end{array}
              \right),
Y_{2}=10^{3}\times\left(
                \begin{array}{cc}
                 0.1104  & -0.0044\\
                 -0.0044 &   1.3873\\
                \end{array}
              \right).$
According to (iv) of Theorem \ref{906theorem31234}, it follows that $(\mathbf{A},P)$ is EMSS-C,
thereby $(\mathbf{A},P)$ is EMSS by Proposition \ref{co973EMSSEMSSC}.
Or directly from Theorem \ref{spectralcriterion},
the EMSSy of $(\mathbf{A},P)$ is immediately available since $r_\sigma(\mathcal{L}_{A})=0.6<1$.
Therefore, the instability of individual modes does not imply that the finite MJLS is not EMSS
(one can refer to \cite{Fangyg2002} for other examples).

Next, we consider a MJLS with the Markov chain on a Borel space,
which can be viewed as the uncountably infinite scenario corresponding to $(\mathbf{A};P)$ discussed above.
Consider the  MJLS $(\mathbf{A};\mathcal{G})$ with $\{\vartheta(k)\}_{ k\in\mathbb{N}}$ being on $(\Theta, \mathcal{B}(\Theta))$,
where $\Theta$ and the measure $\mu$ are the same as those in Example \ref{ex1}.
The transition probability function is given as follows:
$\mathcal{P}\{\vartheta(k+1)\in\Theta_{j}|\vartheta(k)\in\Theta_{i}\}=p_{ij}$, $i\in\overline{1,2}$, $j\in\overline{1,2}$.
As for $A=\{A(\ell)\}_{\ell\in\Theta}$ of $(\mathbf{A};\mathcal{G})$,
let $A(\ell)=A(1,t)=A_{11}+t(A_{12}-A_{11})$ for $\ell=(1,t)\in\Theta_{1}$
and $A(\ell)=A(2,t)=A_{21}+t(A_{22}-A_{21})$ for $\ell=(2,t)\in\Theta_{2}$, $0\leq t\leq 1$,
where $A_{12}=\frac{1}{2}A_{11}$, $A_{22}=\frac{1}{2}A_{21}$.
Although there are instable individual modes,
we find that $Y=\{Y(\ell)\}_{\ell\in\Theta}
\in\mathbb{H}_{\infty}^{2+*}$ with $Y(\ell)=Y(i,t)$ for $\ell=(i,t)$
solves $-Y(i,t)+A(i,t)'[\sum_{j=1}^{2}\int^{1}_{0} p_{ij}Y(j,s)ds]A(i,t)+I_{2\times 2}\leq 0,$ $\forall i\in\overline{1,2},\ t\in[0,1],$
where $Y(1,t)=Y_{1}$ and $Y(2,t)=Y_{2}$.
From (iv) of Theorem \ref{906theorem31234}, $(\mathbf{A};\mathcal{G})$ is EMSS-C.
Furthermore, by Proposition \ref{co973EMSSEMSSC}, we deduce that $(\mathbf{A};\mathcal{G})$ is EMSS.
Hence, for the case where the Markov chain takes values in $\Theta$,
the instability of individual modes does not imply that the MJLS is not EMSS,
which is consistent with the finite case.
\end{example}
\begin{example}
In this example, consider a two-dimension MJLS $\Phi_{v}$,
where $\{\vartheta(k)\}_{k\in\mathbb{N}}$ and $A$ are the same as those of $(\mathbf{A};\mathcal{G})$ in Example \ref{2558ex1}.
It has been shown in Example \ref{2558ex1} that $(\mathbf{A};\mathcal{G})$ is EMSS-C.
Set $B(\ell)=tB_{i}$, $C(\ell)=tC_{i}$, $D(\ell)=0$ for $\ell=(i,t)\in\Theta_{i}$, $i\in\overline{1,2}$, $t\in[0,1]$, where
$B_{1}\!=\!(\begin{array}{cc}
         0.4 & -0.2
       \end{array})',$
$C_{1}\!=\!(\begin{array}{cc}
         -0.12 & -0.3
       \end{array}),$
$B_{2}\!=\!(\begin{array}{cc}
         -1.1 & 0.3
       \end{array})'$,
$C_{2}=(\begin{array}{cc}
         0.4 & 0.2
       \end{array}).$
And take $\gamma=0.5$.
Applying Algorithm \ref{alg1} with the computational accuracy $\epsilon=10^{-5}$
and considering the finite grid approximation for $\Theta$,
we get a numerical solution $K=\{K(\ell)\}_{\ell\in\Theta}$ of \eqref{2970kakrq} presented in Fig.\ref{Figure1},
in which $K(\ell)=K(i,t)$ is a positive semidefinite matrix for any $\ell=(i,t)\in\Theta_{i}$, $i\in\{1,2\}$, $t\in[0,1]$.
Moreover, $K$ satisfies $B(i,t)'[\sum_{j=1}^{2}\int^{1}_{0} p_{ij}K(j,s)ds]B(i,t)-\gamma^{2}<0$.
Substituting $K$ into $F=\mathcal{F}(K)$,
it can also be verified that $Y=\{Y(\ell)\}_{\ell\in\Theta}$
with $Y(\ell)=Y_{i}\in\mathbb{S}^{2+*}$ for $\ell=(i,t)\in\Theta_{i}$ as given in Example \ref{2558ex1},
satisfies
$-Y(i,t)+[A(i,t)+B(i,t)F(i,t)]'[\sum_{j=1}^{2}\int^{1}_{0} p_{ij}Y(j,s)ds]$ $\cdot[A(i,t)+B(i,t)F(i,t)]+I_{2\times 2}\leq 0$,
$i\in\{1,2\}$, $t\in[0,1]$.
These ensure that $K$ is the stabilizing solution of \eqref{2970kakrq} and satisfies the required sign condition.

Now, let us take an exogenous disturbance signal $v=\{v(k)\}_{k\in\mathbb{N}}$ with finite energy, where $v(k)=e^{-2k}$.
Fig. \ref{Figure2} demonstrates 100 possible trajectories of $\Phi_{v}$,
and the mean value of the system state is marked.
During this simulation, the sampling period $d$ is 0.01 hours.
From Fig. \ref{Figure3}, one can see that for any sample time $s\in\{d,\ 2d,\ 3d,\cdots, 400d\}$,
$\|y\|_{l^{2}(0,s;R)}/\|v\|_{l^{2}(0,s;R)}\leq\gamma$,
which verifies the feasibility of Theorem \ref{infbrl3404}.
\begin{figure}[!htb]
\centering
\includegraphics[width=0.5\textwidth]{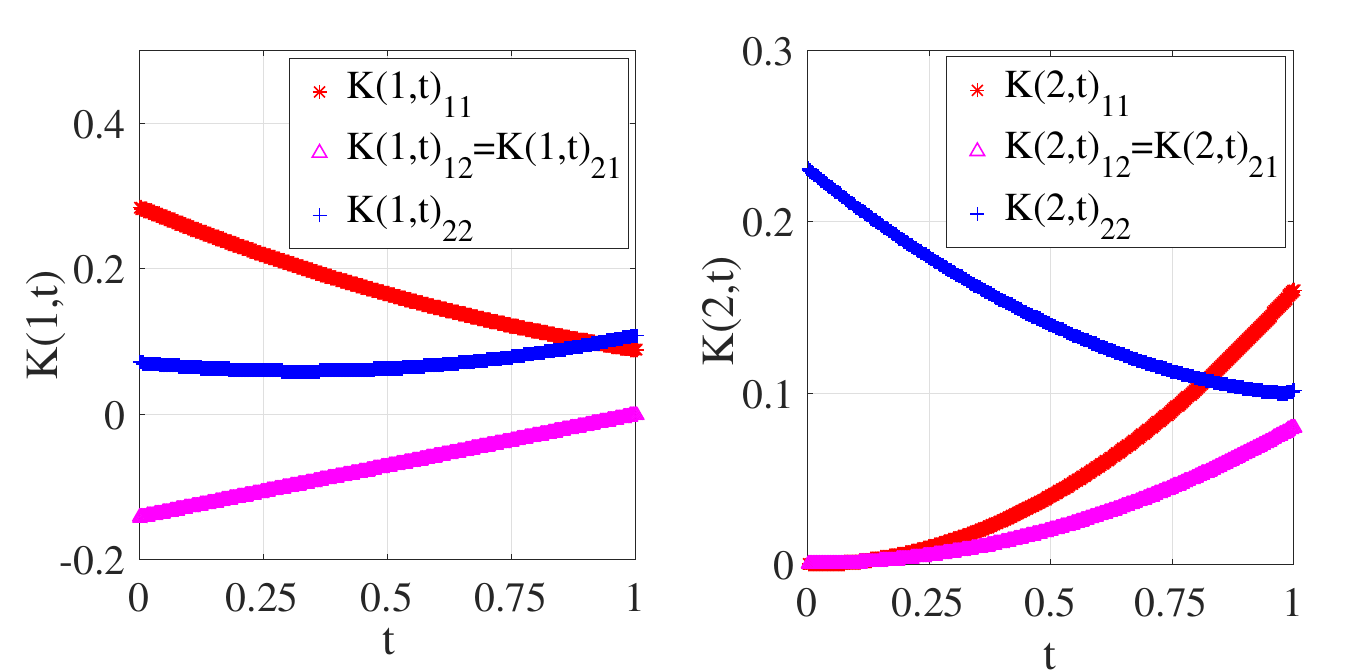}\\
\caption{\small The numerical solution of \eqref{2970kakrq}. $K(1,t)$ $(K(2,t))$ is plotted on the left (right),
where $K(i,t)_{hl}$ is the $(h,l)$-th entry of matrix $K(i,t)$, $h\in\{1,2\},\ l\in\{1,2\}$.}\label{Figure1}
\end{figure}
\begin{figure}[!htb]
\centering
\includegraphics[width=0.5\textwidth]{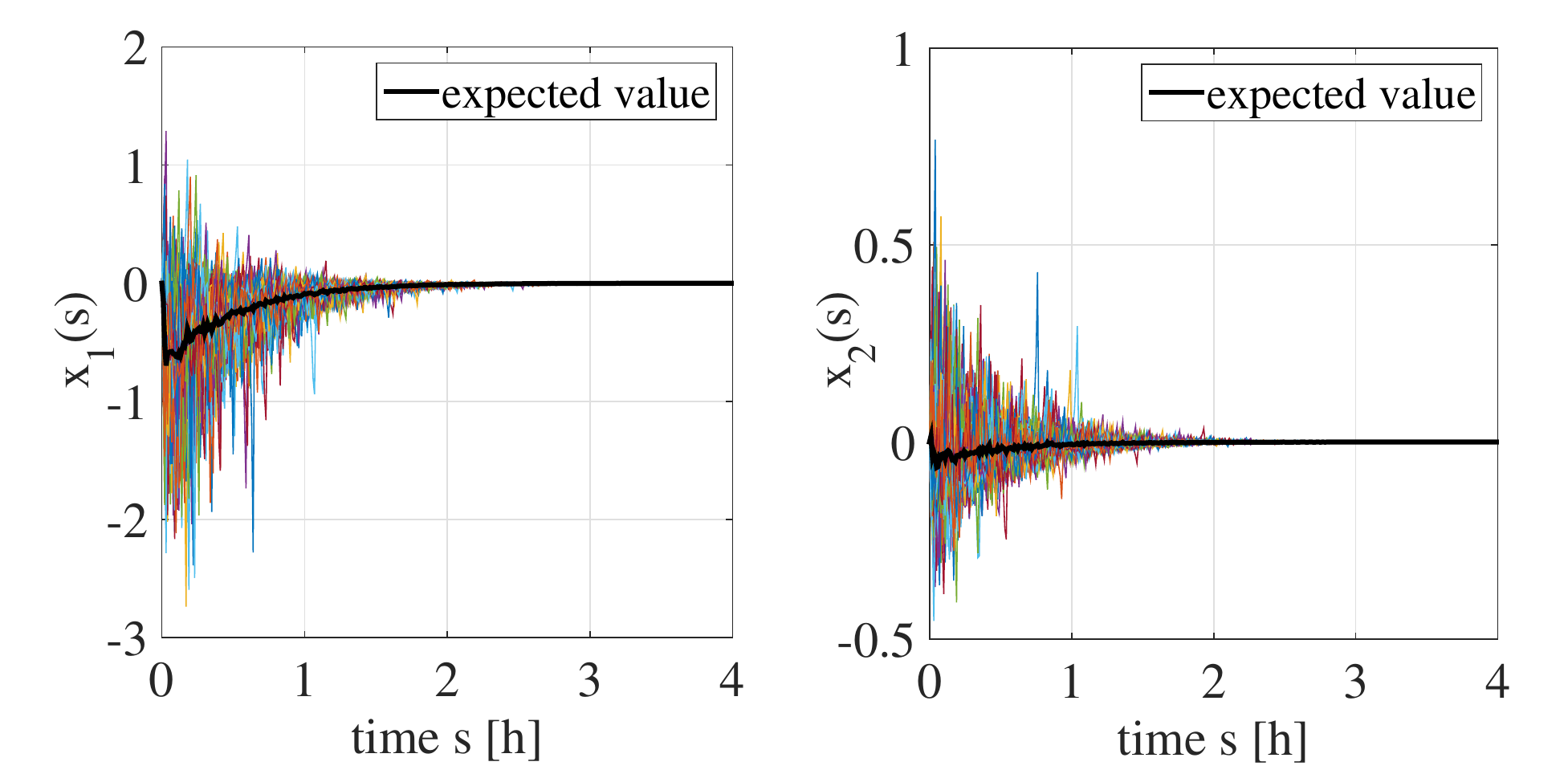}\\
\caption{\small
100 possible trajectories of $x_{1}(s)$ $(x_{2}(s))$ are plotted on the left (right).}\label{Figure2}
\end{figure}
\begin{figure}[!htb]
\centering
     \includegraphics[width=0.5\textwidth]{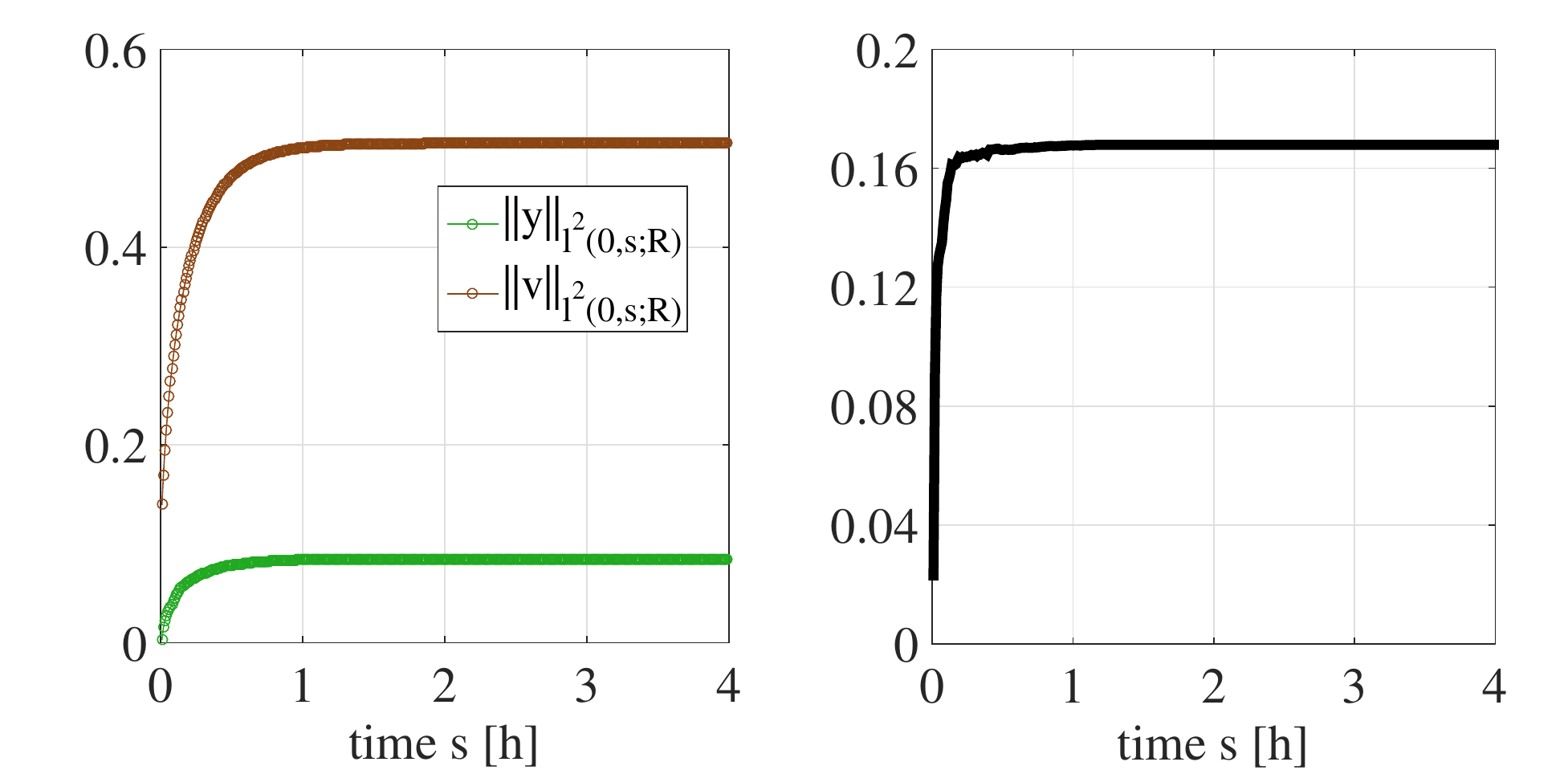}\\
\caption{\small Let $\|v\|_{l^{2}(0,s;R)}$ $(\|y\|_{l^{2}(0,s;R)})$ be the cumulative energy of
the disturbance signal $v$ (output $y$) for $\Phi_{v}$.
The trajectories of $\|v\|_{l^{2}(0,s;R)}$ and $\|y\|_{l^{2}(0,s;R)}$ are plotted on the left,
and the ratio $\|y\|_{l^{2}(0,s;R)}/ \|v\|_{l^{2}(0,s;R)}$ is plotted on the right. }\label{Figure3}
\end{figure}
\end{example}

\section{Conclusion}\label{Conclusion}

This paper has studied exponential stability and the disturbance attenuation property
for discrete-time MJLSs with the Markov chain on a Borel space $(\Theta, \mathcal{B}(\Theta))$.
The results developed could be viewed as an extension of the previous ones
for MJLSs with the Markov chain on a countable set to the uncountable scenario.
First, two kinds of exponential stabilities have been introduced: EMSSy and EMSSy-C.
The spectral criteria have been proposed for EMSSy and EMSSy-C respectively,
and the Lyapunov-type criteria have also been established for EMSSy-C.
As for the relationship between these two kinds of exponential stabilities,
we are of the opinion that EMSSy-C and EMSSy are equivalent (see Conjecture \ref{conjecture}),
and some discussions have been made in Proposition \ref{co973EMSSEMSSC} and Remark \ref{797reborenoeq}.
Regarding the disturbance attenuation property,
the BRLs have been presented in terms of the $\Theta$-coupled DRE for the finite horizon case
and ARE for the infinite horizon case.
As the core of $H_{\infty}$ analysis, the BRL will play a significant role in more studies on the system synthesis issues,
such as $H_{\infty}$ filter and $H_{\infty}$ controller designs for MJLSs.
In addition, it is worthwhile to note that the results drawn in this paper on exponential stability and BRLs
can be directly extended to the system with multiple noises.

\end{document}